\newtheorem{theorem}{Theorem}[section]
\newtheorem{proposition}[theorem]{Proposition}
\newtheorem{Hypothesis}{Hypothesis}
\theoremstyle{definition}
\newtheorem{remark}[theorem]{Remark}
\numberwithin{equation}{section}
\newcommand{\dist}{\mathrm{dist}}      
\newcommand{\diam}{\mathrm{diam}}      
\renewcommand{\Re}{{\ensuremath{\mathrm{Re\,}}}} 
\renewcommand{\div}{\mathrm{div}\,}    
\newcommand\restr[2]{{
  \left.\kern-\nulldelimiterspace 
  #1 
  \vphantom{\big|} 
  \right|_{#2} 
  }}
\title{Optical inversion using plasmonic contrast agents
}
\author[Cao, Ghandriche and Sini]{Xinlin Cao$^{\dag}$, Ahcene Ghandriche $^{**}$ and Mourad Sini$^{\ddag}$}
\thanks{$^{\dag}$Departement of Applied Mathematics, The Hong Kong Polytechnic University, Hong Kong SAR. Email: xinlin.cao@polyu.edu.hk}
\thanks{$^{**}$ Nanjing Center for Applied Mathematics, Nanjing, 211135, People's Republic of China. Email: gh.hsen@njcam.org.cn}
\thanks{$^{\ddag}$ RICAM, Austrian Academy of Sciences, Altenbergerstrasse 69, A-4040, Linz, Austria. Email: mourad.sini@oeaw.ac.at. This author is partially supported by the Austrian Science Fund (FWF): P 30756-NBL and P 32660}
\begin{document}

\begin{abstract}

We describe a new method to reconstruct the permittivity distribution, of an object to image, from the remotely measured electromagnetic field. We propose to use the remote fields measured before and after injecting locally in the medium plasmonic nano-particles. Such a technique is known in the framework of imaging using contrast agents where, in optical imaging, the nano-particles play the role of these contrast agents. The plasmonic nano-particles are known to enjoy resonant effects, as enhancing the applied incident field, while excited at certain particular frequencies called plasmonic resonances.
These resonant frequencies encode the values of the unknown permittivity at the location of the injected nano-particles. 
The imaging methods we propose mainly use this resonant effect. 
We show that the imaging functional build up from contrasting the fields before and after injecting the nano-particles, measured at one single back-scattered direction, and in an explicit band of incident frequencies, reaches its maximum values, in terms of the incident frequency, precisely at the mentioned plasmonic resonances. 
Such a behavior allows us to recover these plasmonic resonances from which we recover the point-wise values of the permittivity distribution.
\bigskip

In this work, we describe the method and provide the mathematical justification of this resonant effect and its use for the optical inversion using plasmonic nano-particles as contrast agents.


\end{abstract}

\keywords{optic imaging; plasmonic nano-particles; surface plasmon resonance; inverse problems; Maxwell system.}
\subjclass[2010]{35R30, 35C20}

\date{\today}

\allowdisplaybreaks

\maketitle

\section{Introduction and statement of the results}

In quantitative optical imaging, the goal is to estimate the optical properties, as the permittivity function, from the optical signal response of the object to image after excitation with coherent or incoherent incident electromagnetic fields, see \cite{Arridge, HAD, S-U-2003, Habib-book, GHA}. The optical response is measured at receivers located away from the object to image. Such inversion requires multiple emitters-receivers, i.e. many measurements, and it is known to be highly unstable \cite{Isakov} and hence one is unable to recover low contrasting permittivity functions using such remote measurements.  To overcome such difficulties, it is proposed in the engineering literature, see \cite{BBC, QCF, Ahcene-Mourad-ICCM, Anderson-et-all, Ilovitsh, Quaia, P-P-B:2015, Li-Chen}, to perturb the medium with small-scaled inhomogeneities to create such missing contrast to render the imaging more accurate. In optics, such contrast agents are given by nano-particles which enjoy appropriate contrasting properties. We have two classes of such nano-particles: dielectric and plasmonic nano-particles. The dielectric nano-particles are highly localized as they are nano-scaled and have high contrast permittivity.  Under these scales, we can choose the incident frequency so that we excite the dielectric resonances which are related to the eigenvalues of the vector Newtonian operator, see \cite{ammari2023, cao2023} for a mathematical justification. Moreover, in practice, optically induced waves by dielectric nano-particles have been examined and extensively studied, see for instance \cite{TS2018, KMBKL, zograf2020all, zograf2021all}. The main feature of the plasmonic nano-particles is that they enjoy negative values of the real part of their permittivity if we choose incident frequencies close to the plasmonic frequencies of the nano-particle. With such negative permittivity, we can excite the plasmonic resonances which are related to the eigenvalues of the Magnetization operator, see \cite{AMMARI2019242, AG-MS-Maxwell}. These plasmonic nano-particles show an ability to manipulate light at the nano-scaled size due to their support for these resonant optical modes, see \cite{CP, maier2007plasmonics, FZS, liberman2010optical, N-H-Book, hao2004electromagnetic, NobleMetals, zeman1987, baffou2009} for an extensive description and studies of their properties. Such unique properties are the reasons behind the increase of interest in the study of plasmonic nano-particles, in particular those made of gold or silver, as demonstrated in \cite{N-H-Book}. These optical properties of plasmonic nano-particles have been utilized in numerous applications, including bio-sensors, thermo-therapy, and solar cells, etc. To learn more on these two types of nano-particles about their common as well as different properties, the interested reader can see the references \cite{zograf2020all, zograf2021all, KMBKL}.\\
To describe the material properties of these nano-particles, we use the Lorentz model where the permeability $\mu$ is kept constant as the one of the homogeneous background while the permittivity has the form:
\begin{equation}\label{Lorentz-model}
\epsilon_{p}(\omega) \, = \, \epsilon_\infty \left( 1+\frac{\omega^2_p}{\omega^2_0-\omega^2-i \gamma \omega} \right)
\end{equation} 
where $\omega_p$ is the electric plasma frequency, $\omega_0$ is the undamped frequency, and $\gamma$ is the electric damping frequency which is assumed to be small, i.e., $0 \leq \gamma \ll 1$, and its order of smallness will be discussed later, see \eqref{gammaah}. We refer \cite[Formula (4)]{PhysRevA.82.055802}  or \cite[Formula (1.3)]{engheta2006metamaterials} for the derivation of this model. It is observed that if we choose the incident frequency $\omega$ so that $\omega^2$ is larger than $\omega_0^2$, then the real part becomes negative. For such choices of the incident frequency, the nano-particle behaves as a plasmonic nano-particle. In the current work, we focus on the use of these properties for quantitative imaging. As mentioned before, these nano-particles are used as contrast agents. Few scenarios and assumptions on the distribution of the contrast agents can be considered. These scenarios are of course not exhaustive as the reality might be more complicated. Nevertheless, we state the following classes of distributions  under which we can perform rigorous analysis. 

\begin{enumerate}
\item The contrast agents are injected in isolation, i.e. they are injected one after another. In addition, we need to assume them well separated to insure weak multiple scattering between them.
\bigskip

\item They are injected as a cluster, i.e. all-at-once. In this case we have full strong multiple scattering between them. Here also, we can handle the following situations.
\bigskip

\begin{enumerate}

\item The distribution is regular, i.e. periodic or following a given density of distribution. In this case, we can expect deterministic estimations.
\bigskip

\item The distribution is random. In this case, we aim for probabilistic estimations.
\end{enumerate}
\end{enumerate}
\bigskip

The approach we propose to analyse these families of imaging modalities using contrast agents can be summarized as follows. 
Contrasting the measured fields collected before and after injecting the small-agents, we propose the following solutions.
\begin{enumerate}
\item[]
\item Case when we inject the contrast agents one after another, under the weak scattering assumption.
\begin{enumerate}
\item[] 
\item  We can recover the  {\it{related resonances}}. From these resonances, we derive the values of the 'high order' coefficient (i.e. the mass density in acoustics). This idea works in the time-harmonic regimes.
\item[] 
\item We can recover the internal values of {\it{the travel time function}}. Using the Eikonal equation, we recover the speed of propagation. This idea works in time-domain regimes.
\item[] 
\item In addition, we can recover the internal values of the {\it{total fields}} generated only by the background medium. This allows to recover the lower order coefficients (as the bulk modulus in acoustics). This idea works for both the time-harmonic and the time-domain regimes. 
\end{enumerate}
\bigskip

These ideas were applied to the time-harmonic as well as the time domain imaging for acoustics, i.e. ultrasound imaging using bubble as contrast agents, see \cite{dabrowski2021, Soumen-Minnaert, SSW}.
\bigskip

\item  Case when we inject the contrast agents {\it{all-at-once}}, as a cluster.
\bigskip

\begin{enumerate}

 \item using the resonant character of the contrast-agents,
 we can {\it{linearize}} the measured boundary-map, i.e. the Dirichlet-Neumann map.
\bigskip

\item  then we solve the linearized inverse problem.
\end{enumerate}
\bigskip

This idea was applied in \cite{ghandriche2023calderon} to the Calderón problem using resonant perturbations modeling the acoustic imaging with droplets as contrast agents.
\end{enumerate}
\bigskip

\noindent So far, this approach was applied to acoustic waves based imaging, i.e. for the ultrasound imaging modality using bubbles or droplets as contrast agents and also photo-acoustic imaging using nano-particles, see \cite{AG-MS-Maxwell, AG-MS-Simultaneous}. Our goal in the current work is to extend this approach to  the electromagnetic waves based imaging as the case of quantitative optical imaging using nano-particles. To describe the mathematical model behind the optical experiment, we set the electric field $E(\cdot)$ to be solution of the following system
\begin{equation}
\label{eq:electromagnetic_scattering}
\left\{
\begin{array}{lll}
\nabla \times \nabla \times \left( E \right) \, - \, \omega^2 \; \varepsilon \; \mu \; E \,  =  \, 0, \quad \; \qquad \; \, \text{in} \, \, \mathbb{R}^{3}, \\ 
 E \, :=  \, E^{s} \, + \, E^{i}, \qquad \qquad \qquad \qquad \qquad  \mbox{ in } \mathbb{R}^{3},\\
\underset{\left\vert x \right\vert \rightarrow + \infty}{\lim} \left\vert x \right\vert \; \left(\nabla \times \left( E^{s}(x) \right) \times x \, - \, E^{s}(x) \right)  = 0,
\end{array}
\right.
\end{equation}
with the last condition known as the Silver-M\"{u}ller radiation conditions. Here, $\omega$ is the incident frequency, $\mu$ is the permeability parameter, which will be taken to be constant in the whole space $\mathbb{R}^{3}$, and $\varepsilon(\cdot)$ is the permittivity function 
defined as 
\begin{equation}\label{DefPerFct}
\varepsilon(x) := \begin{cases} 
\epsilon_{\infty} & in \quad \mathbb{R}^{3} \setminus \Omega, \\
\epsilon_{0}(x) & in \quad \Omega \setminus D, \\
\epsilon_{p}(\omega) & in \quad D,
\end{cases}
\end{equation}
with $\epsilon_{\infty}$ being a positive constant used to represent the permittivity of the background (outside $\Omega$), $\epsilon_{p}(\omega)$ is given by $(\ref{Lorentz-model})$, and the permittivity $\epsilon_0(\cdot)$ is variable and it is supposed to be smooth of class $\mathcal{C}^{1}$ inside $\Omega$. Besides, we assume that $Im\left( \epsilon_{0}(\cdot) \right)$ is small such that 
\begin{equation}\label{ImEpsgamma}
    \left\Vert Im\left( \epsilon_{0}(\cdot) \right) \right\Vert_{\mathbb{L}^{\infty}(\Omega)} \; = \; \mathcal{O}\left( \gamma \right), 
\end{equation}
where $\gamma$ is the electric damping frequency parameter of the Lorentz model, see $(\ref{Lorentz-model})$. The domain $D$ is given as a collection of $\aleph$ connected and $\mathcal{C}^{2}$-smooth nano-particles $D_{i}$'s, i.e. $D \, := \, \overset{\aleph}{\underset{i=1}\cup} D_{i}$. In addition $D \subset \Omega$, where $\Omega$ is a bounded and $\mathcal{C}^{2}$-smooth domain in $\mathbb{R}^{3}$. Related to the permittivity function given by $(\ref{DefPerFct})$, we set the index of refraction $\bm{n}$, in $\mathbb{R}^{3}$, given by 
\begin{align}\label{Def-Index-Ref}
  \bm{n} := \begin{cases}
      \sqrt{\epsilon_{p} \, \mu} & \text{in $D$} \\
      \bm{n}_{0} & \text{in $\mathbb{R}^{3} \setminus D$}
    \end{cases} 
    \qquad \text{and} \qquad \bm{n}_{0} := \begin{cases}
      \sqrt{\epsilon_{0}(\cdot) \, \mu} & \text{in $\Omega$} \\
      \sqrt{\epsilon_{\infty} \, \mu} & \text{in $\mathbb{R}^{3} \setminus \Omega$}
    \end{cases}. 
\end{align} 
Each nano-particle $D_{j}$, for $1 \leq j \leq \aleph$, is taken of the form $D_{j} \, :=a \; B_{j} \, +z_{j}$ where $z_{j}$ models its location and $a$ signifies its relative radius with $B_{j}$ as $\mathcal{C}^2$-smooth domain of maximum radius $1$ such that $B_{j} \subset B(0,1)$, where $B(0,1)$ is the unit ball centered at the origin. The parameter $a$ is defined by 
\begin{equation*}
    a \, := \, \underset{1 \leq j \leq \aleph}{\max} \; \diam\left( D_{j} \right),
\end{equation*}
and we denote $d$ as the minimal distance between any two of the distributed nano-particles, i.e. 
\begin{equation*}
    d \, = \, \underset{1 \leq i, j \leq \aleph \atop i \neq j}{\min} \; \dist\left( D_{i}, D_{j} \right). 
\end{equation*}
The parameters $d$ and $\aleph$, are linked to the parameter $a$ through the following behaviors
\begin{equation}\label{datNas}
    d \, \sim \, a^{t} \quad \text{and} \quad \aleph \, \sim \, \left[ a^{-s} \right], \quad \text{with} \quad 0 \, < \, t, s \, < \, 1, 
\end{equation}
where $\left[ \cdot \right]$ stands for the entire part function. Moreover, for short notations, we denote $d_{mj}$ as the distance between $z_{m}$ and $z_{j}$, where $z_{m}$ (respectively, $z_{j}$) is the center of $D_{m}$ (respectively, $D_{j}$), i.e.
\begin{equation*}\label{dmj}
    d_{mj} \, := \, \left\vert z_{m} \, - \, z_{j} \right\vert, \quad \text{with} \quad 1\leq m, j\leq \aleph,\quad m\neq j. 
\end{equation*}The problem $(\ref{eq:electromagnetic_scattering})$ is well-posed in appropriate Sobolev spaces, see for example \cite[Theorem 2.1]{kirsch2004factorization}.   We assume that the incident field $E^{Inc}(\cdot, \cdot, \cdot)$ is given by
 \begin{equation}\label{DefEinc}
    E^{Inc}\left( x, \theta, q \right) \, = \, \left( \theta \times q \right) \, e^{i \, k \, x \cdot \theta}, \quad \text{for} \quad \theta , q \in \mathbb{S}^{2} \quad \text{and} \quad x \in \mathbb{R}^{3}, 
\end{equation}
with $\mathbb{S}^{2}$ being the unit sphere, $\theta$ is the incident direction vector and $q$ is the polarization vector such that $q \cdot \theta \, = \, 0$. The incident field $E^{Inc}\left( \cdot, \cdot, \cdot \right)$ is the solution to
 \begin{equation*}\label{EquaDefEinc}
    \underset{x}{\nabla} \times \underset{x}{\nabla} \times \left( E^{Inc}\left( x, \theta, q \right) \right) \, - \, k^{2} \, E^{Inc}\left( x, \theta, q \right) \; = \; 0, \quad x \in \mathbb{R}^{3}, 
\end{equation*}
where the wave number $k$ is given by the positive constant $k \, = \, \omega \, \sqrt{\epsilon_{\infty} \, \mu}$. Besides, the scattered wave $E^{s}\left( \cdot, \cdot, \cdot \right)$ has the following asymptotic behavior,
\begin{equation*}
    E^{s}\left( \hat{x}, \theta, q \right) \, = \, \frac{e^{i \, k \, \left\vert x \right\vert}}{\left\vert x \right\vert} \, \left( E^{\infty}\left( \hat{x}, \theta, q \right) \, + \, \mathcal{O}\left( \frac{1}{\left\vert x \right\vert} \right) \right), \quad \left\vert x \right\vert \rightarrow + \infty,
\end{equation*}
where $E^{\infty}\left( \hat{x}, \theta, q \right)$ is the corresponding electromagnetic far-field pattern of $(\ref{eq:electromagnetic_scattering})$ in the propagation direction $\hat{x}$. 
\medskip
\newline
Motivated by the use of integral equations to represent solutions to Maxwell's equations, and for future use, we introduce the Newtonian operator $N^{k}_{B}(\cdot)$ and the Magnetization operator $\nabla M^{k}_{B}(\cdot)$, both acting on vector fields
\begin{equation}\label{DefNDefMk}
N^{k}_{B}(F)(x) \, := \, \int_{B} \Phi_{k}(x,y) \, F(y)dy \quad \text{and} \quad  \nabla M^{k}_{B}(F)(x) \, := \, \nabla \int_{B}\underset{y}{\nabla}\Phi_{k}(x,y) \cdot F(y)dy,
\end{equation} 
where 
\begin{equation}\label{FSHE}
    \Phi_{k}(x,y) \; := \; \frac{e^{i \, k \, \left\vert x-y \right\vert}}{4\pi \left\vert x-y\right\vert}, \quad x \neq y, 
\end{equation}
is the fundamental solution to the Helmholtz equation in the entire space. Particularly, for $k = 0$ we obtain
\begin{equation}\label{DefNDefM}
N_{B}(F)(x) \, := \, \int_{B} \frac{1}{4\pi \vert x-y\vert} F(y)dy \quad \text{and} \quad \nabla M_{B}(F)(x) \, := \, \nabla \int_{B} \underset{y}{\nabla}\left( \frac{1}{4\pi \vert x-y\vert} \right) \cdot F(y)dy.
\end{equation} 
We will observe later that the introduced operators, see $(\ref{DefNDefMk})$, appear after taking convolution of vector fields with the Green's kernel $G_{k}(\cdot, \cdot)$ associated with the problem $(\ref{eq:electromagnetic_scattering})$. More precisely, $G_{k}(\cdot, \cdot)$ is the solution, in the distributional sense, to     
\begin{equation}\label{Eq0401}
    \underset{y}{\nabla} \times \underset{y}{\nabla} \times G_{k}(x, y) \, - \, \omega^{2} \, \bm{n}_{0}^{2}(y)  \, G_{k}(x,y) \, = \, \delta(x,y) \, I, \quad x, y \; \in \mathbb{R}^{3},  
\end{equation}
such that each column of $G_{k}(x,\cdot)$ satisfies the outgoing radiation condition 
\begin{equation*}
    \underset{\left\vert x \right\vert \rightarrow + \infty}{\lim} \left\vert x \right\vert \; \left( \underset{y}{\nabla} \times G_{k}(x, y) \times \frac{x}{\left\vert x \right\vert} \, - \, i \, k \, G_{k}(x, y) \right) \; = \; 0.
\end{equation*}
When dealing with vector fields, it is crucial to recall the Helmholtz decomposition for $\mathbb{L}^{2}(B)$-space given by 
\begin{equation}\label{HelmholtzDecomposition}
\left( \mathbb{L}^2(B)\right)^{3} \, = \, \mathbb{H}_{0}(\div = 0)(B) \oplus \mathbb{H}_{0}(Curl = 0)(B) \oplus \nabla \mathcal{H}armonic(B)
\end{equation}
where  
\begin{eqnarray*}
 \mathbb{H}_{0}(\div = 0)(B) \, &:=& \, \{u \in \left( \mathbb{L}^{2}(B) \right)^{3} \big| \nabla \cdot \left( u \right) \, = \, 0, \nu \cdot u=0 \mbox{ on } \partial B\},   \\
 \mathbb{H}_{0}(Curl = 0)(B) \, &:=& \, \{u \in \left( \mathbb{L}^{2}(B) \right)^{3}\big| \nabla \times u \, = \, 0, \nu \times u=0 \mbox{ on } \partial B\}
\end{eqnarray*}
and
\begin{equation*}
\nabla \mathcal{H}armonic(B) \, := \, \{u \in \left( \mathbb{L}^{2}(B) \right)^{3} \big|u=\nabla \phi,~~ \Delta \phi =0 \mbox{ in } B\}.
\end{equation*}
\medskip
\newline
Of all possible $\left( \mathbb{L}^2(B) \right)^{3}$-space decompositions, $(\ref{HelmholtzDecomposition})$ is the most natural one, as we know that $N{|_{\mathbb{H}_{0}(\div = 0)(B)}}$ and $N{|_{\mathbb{H}_{0}(Curl = 0)(B)}}$ generate a complete orthonormal bases $\left(\lambda_{n}^{(1)}(B); e_{n}^{(1)} \right)_{n \in \mathbb{N}}$ and $\left(\lambda_{n}^{(2)}(B); e_{n}^{(2)} \right)_{n \in \mathbb{N}}$ respectively. In addition, it is known that $\nabla M : \, \nabla \mathcal{H}armonic(B) \rightarrow \nabla \mathcal{H}armonic(B)$ has a complete basis $\left(\lambda_{n}^{(3)}(B); e_{n}^{(3)} \right)_{n \in \mathbb{N}}$, see \cite[Proposition 5.1]{AG-MS-Maxwell}. For an in-depth study of the properties of the Magnetization operator, the reader can refer to \cite{AhnDyaRae99, friedman1980mathematical, friedman1981mathematical, 10.2307/2008286, Dyakin-Rayevskii} and \cite{Raevskii1994}.  
\bigskip
\newline 
In the sequel, we fix an $n_{0} \in \mathbb{N}$ and we let the used incident frequency $\omega$ to be of the form 
\begin{equation*}
    \omega^{2} \, =  \, \omega^{2}_{P_{\ell},n_{0},j} \, \pm \, C \, a^{h}, \quad h \in (0,1) \quad \text{and} \quad 1 \leq j \leq \aleph, 
\end{equation*}
where $\omega^{2}_{P_{\ell},n_{0},j}$ is the plasmonic resonance related to the eigenvalue $\lambda_{n_{0}}^{(3)}(B)$, see $(\ref{PlasmonicResonace})$, and $C$ is a constant independent on the parameter $a$. 
\medskip
\newline 
We are now in a position to state the primary outcome of this work.
\medskip
\begin{theorem}\label{ThmResult} Under the regularity assumptions on $\Omega, D, \epsilon$ and  $\mu$ described above, we have the following expansions.
\begin{enumerate}
\item[] 
\item For the scattered fields, with $x$ is away from $D$,
\begin{eqnarray}\label{Es-Vs-intro}
\nonumber
\left( E^{s} \, - \, V^{s}\right)(x, \theta)  \, & = & - \,  \mu \, a^{3} \, \omega^{2} \, \sum_{j=1}^{\aleph}  \, \frac{\epsilon_{0}(z_{j}) \, \left(\epsilon_{0}(z_{j}) \, - \, \Lambda_{n_{0},j}\left( \omega \right) \right)}{\lambda_{n_{0}}^{(3)}(B) \, \Lambda_{n_{0},j}\left( \omega \right)} \, \langle V(z_{j}, \theta, q), \int_{B}  e_{n_{0}}^{(3)}(y)  \, dy \rangle \, G_{k}(x,z_{j}) \cdot \int_{B}  e_{n_{0}}^{(3)}(y)  \, dy  \\ &+& \mathcal{O}\left( a^{\min\left( (3-s), (6-3t-2h-\frac{3s}{2}), (4 - h - s) \right)}\right).
\end{eqnarray}
\item[] 
\item For the far-fields
\begin{eqnarray}\label{Es-Vs-intro-far-field}
\nonumber
&& \langle \left( E^{\infty} \, - \, V^{\infty}\right)(\hat{x}) , \left( \hat{x} \times q \right) \rangle \\ \nonumber & = & \, \frac{ \mu \, a^{3}}{4 \, \pi}  \, \sum_{j=1}^{\aleph}  \, \frac{\omega^{2}_{P_{\ell},n_{0},j} \, \epsilon_{0}(z_{j}) \, \left(\epsilon_{0}(z_{j}) \, - \, \Lambda_{n_{0},j}\left( \omega_{P_{\ell},n_{0},j} \right) \right)}{\lambda_{n_{0}}^{(3)}(B) \, \Lambda_{n_{0},j}\left( \omega \right)}  \, \langle V(z_{j},  \theta, q), \int_{B}  e_{n_{0}}^{(3)}(y)  \, dy \rangle \, \langle V\left(z_{j}, - \hat{x}, q \right), \int_{B}  e_{n_{0}}^{(3)}(y)  \, dy  \rangle  \\ && \qquad \qquad + \mathcal{O}\left( a^{\min\left( (3-s), (6-3t-2h-\frac{3s}{2}), (4 - h - s) \right)}\right). 
\end{eqnarray}
\end{enumerate}
In particular, the far-field, in the back-scattered direction $\hat{x}=-\theta$, has the approximation 
\begin{eqnarray}\label{back-scattered-far-field}
\nonumber
&& \langle \left( E^{\infty} \, - \, V^{\infty}\right)(- \, \theta) , \left( \theta \times q \right) \rangle = \\ \nonumber  && \, - \frac{\mu \, a^{3}}{4 \, \pi}  \, \sum_{j=1}^{\aleph}  \, \frac{\omega^{2}_{P_{\ell},n_{0},j} \, \epsilon_{0}(z_{j}) \, \left(\epsilon_{0}(z_{j}) \, - \, \Lambda_{n_{0},j}\left( \omega_{P_{\ell},n_{0},j} \right) \right)}{\lambda_{n_{0}}^{(3)}(B) \, \Lambda_{n_{0},j}\left( \omega \right)} \, \left(  \langle V\left(z_{j},  \theta, q \right), \int_{B}  e_{n_{0}}^{(3)}(y)  \, dy \rangle \right)^{2} \\ &+& \mathcal{O}\left( a^{\min\left( (3-s), (6-3t-2h-\frac{3s}{2}), (4 - h - s) \right)}\right). 
\end{eqnarray}
The above expansions are valid under the conditions $0 \leq h, t, s < 1$ such that 
\begin{equation}\label{3hts0}
    3 - h - 3t - s \, > \, 0,
\end{equation}
with $\hat{x}, q \, \in \mathbb{S}^{2}$ and $\Lambda_{n_{0},j}\left( \cdot \right)$ is the dispersion function given by 
\begin{equation}\label{Thm-Dis-Eq}
    \Lambda_{n_{0},j}\left( \omega \right) \, := \, \left( \epsilon_{0}(z_{j}) \, - \, \lambda_{n_{0}}^{(3)} \, \left( \epsilon_{0}(z_{j}) \, - \, \epsilon_{p}(\omega) \right) \right), 
\end{equation} 
where $\epsilon_{p}(\omega)$ is the Lorentz model for the permittivity given by $(\ref{Lorentz-model})$. 
\bigskip

\noindent The field $V$ (as its related scattered and far-field pattern) is the solution of the problem $(\ref{eq:electromagnetic_scattering})$ in the absence of the nano-particles.
\end{theorem}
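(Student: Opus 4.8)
The plan is to reduce the difference $E-V$ to a volume integral over the particle collection $D$ by a Lippmann--Schwinger representation with the background Green's tensor $G_{k}$ of $(\ref{Eq0401})$, and then to analyse the resulting interior problem through the spectral theory of the Newtonian and Magnetization operators. Since $\varepsilon$ and the background index $\bm{n}_{0}$, see $(\ref{Def-Index-Ref})$, coincide outside $D$, subtracting the equations $(\ref{eq:electromagnetic_scattering})$ for $E$ and for $V$ and representing by $G_{k}$ gives, for $x$ away from $D$,
\begin{equation*}
(E^{s}-V^{s})(x,\theta)\, = \, \omega^{2}\,\mu\,\sum_{j=1}^{\aleph}\int_{D_{j}}G_{k}(x,y)\,\big(\epsilon_{p}(\omega)-\epsilon_{0}(y)\big)\,E(y)\,dy,
\end{equation*}
so that everything is controlled by the interior field $E|_{D}$ and by the moments $\int_{D_{j}}(\epsilon_{p}(\omega)-\epsilon_{0}(y))E(y)\,dy$. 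First I would rescale each particle by $D_{j}=aB_{j}+z_{j}$, put $\widetilde{E}_{j}(\xi):=E(z_{j}+a\xi)$, and expand $G_{k}(z_{j}+a\xi,z_{j}+a\eta)$ about the free-space Helmholtz kernel $(\ref{FSHE})$; its most singular part reproduces exactly the operators $N_{B_{j}}$ and $\nabla M_{B_{j}}$ of $(\ref{DefNDefM})$ applied to $(\epsilon_{p}(\omega)-\epsilon_{0}(z_{j}))\widetilde{E}_{j}$, the freezing error $\epsilon_{0}(y)-\epsilon_{0}(z_{j})$ being $\mathcal{O}(a)$ by the $\mathcal{C}^{1}$-smoothness of $\epsilon_{0}$, while the smooth remainder of $G_{k}$ is $\mathcal{O}(a)$ too. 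This produces, on each $B_{j}$, a closed integral equation for $\widetilde{E}_{j}$ up to controlled errors and up to the coupling with the other particles.

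Next I would use the Helmholtz decomposition $(\ref{HelmholtzDecomposition})$ to split $\widetilde{E}_{j}$ into its divergence-free, curl-free and $\nabla\mathcal{H}armonic(B_{j})$ components and project the interior equation onto the three orthonormal systems $(\lambda_{n}^{(1)}(B);e_{n}^{(1)})$, $(\lambda_{n}^{(2)}(B);e_{n}^{(2)})$, $(\lambda_{n}^{(3)}(B);e_{n}^{(3)})$. On the first two subspaces the symbol of the equation stays bounded away from zero, so those components are non-resonant and feed only the remainder; on $\nabla\mathcal{H}armonic(B_{j})$ the $n$-th mode carries the symbol $\epsilon_{0}(z_{j})-\lambda_{n}^{(3)}\big(\epsilon_{0}(z_{j})-\epsilon_{p}(\omega)\big)$, which for $n=n_{0}$ is precisely $\Lambda_{n_{0},j}(\omega)$ of $(\ref{Thm-Dis-Eq})$. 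For the prescribed frequency $\omega^{2}=\omega^{2}_{P_{\ell},n_{0},j}\pm C\,a^{h}$ one has $\Lambda_{n_{0},j}(\omega)=\mathcal{O}(a^{h})$ whereas the symbols of the modes $n\neq n_{0}$ stay of order one; the matching lower bound $|\Lambda_{n_{0},j}(\omega)|\geq c\,a^{h}$ follows from the non-degeneracy of the real dispersion relation at $\omega_{P_{\ell},n_{0},j}$ together with the bound $(\ref{ImEpsgamma})$ and the assumed order of the damping $\gamma$ in $(\ref{Lorentz-model})$, which make $\mathrm{Im}\,\Lambda_{n_{0},j}(\omega)=\mathcal{O}(\gamma)$ negligible against its real part. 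Consequently the $e_{n_{0}}^{(3)}$-mode is resonantly amplified: to leading order $\widetilde{E}_{j}$ equals $\Lambda_{n_{0},j}(\omega)^{-1}$ times a multiple of $e_{n_{0}}^{(3)}$ fixed by $\langle V(z_{j},\theta,q),\int_{B}e_{n_{0}}^{(3)}(y)\,dy\rangle$, all other modes being of lower order.

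I would then close the system. Evaluating the representation formula at the centres $z_{m}$ and keeping only the resonant mode yields a Foldy--Lax type linear system for the $\aleph$ moments (equivalently their $e_{n_{0}}^{(3)}$-projections); after normalisation its diagonal is of size $\Lambda_{n_{0},j}(\omega)\sim a^{h}$, whereas the coupling between particles $m$ and $j$ is governed by the Hessian-type kernel arising from the Magnetization interaction and hence decays like $a^{3}\,d_{mj}^{-3}$, so that after summation over the $\aleph\sim a^{-s}$ particles, using $d\sim a^{t}$, it is $\mathcal{O}(a^{3-3t-s})$ relative to the diagonal. The hypothesis $(\ref{3hts0})$, \ie, $3-h-3t-s>0$, is exactly what makes this matrix diagonally dominant, hence invertible with leading behaviour dictated by the single-particle analysis. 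Substituting the solved moments back into the representation formula, replacing $E(z_{j})$ by $V(z_{j})$ at leading order and expanding $G_{k}(x,z_{j})$ --- respectively its far-field kernel, which (by reciprocity for the background medium) produces the second factor $\langle V(z_{j},-\hat{x},q),\int_{B}e_{n_{0}}^{(3)}(y)\,dy\rangle$ --- yields $(\ref{Es-Vs-intro})$ and $(\ref{Es-Vs-intro-far-field})$, and the choice $\hat{x}=-\theta$ gives the back-scattered formula $(\ref{back-scattered-far-field})$. Tracking the three error sources --- the $\aleph$-fold sum of single-particle remainders, the leading double-scattering correction, and the $\mathcal{O}(a)$ corrections inside the single-particle expansion --- reproduces the stated exponent $\min\big((3-s),(6-3t-2h-\frac{3s}{2}),(4-h-s)\big)$.

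The step I expect to be the main obstacle is making the two inversions uniform in $a$. On one side one must show that the interior integral operator on each $B_{j}$ has resolvent of size exactly $\mathcal{O}(a^{-h})$ near the plasmonic resonance, which requires isolating the single eigenvalue $\lambda_{n_{0}}^{(3)}$ of $\nabla M_{B_{j}}$ from the rest of the spectrum of the full, non-self-adjoint and $\omega$-dependent interior operator, and deriving sharp a priori bounds for $\widetilde{E}_{j}$ (in $\mathbb{L}^{2}$ and in a norm sensitive to the $\mathcal{C}^{1}$-regularity of $\epsilon_{0}$). On the other side one must control the Foldy--Lax matrix: the subtle point is that the dipole--dipole interaction decays only like $d_{mj}^{-3}$, so its accumulation over a dense cluster of $\aleph\sim a^{-s}$ particles has to be estimated with care --- this is where the precise shape of the remainder, in particular the exponent $6-3t-2h-\frac{3s}{2}$, is produced --- and shown to be beaten by $(\ref{3hts0})$.
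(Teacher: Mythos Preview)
Your plan is essentially the one the paper carries out: Lippmann--Schwinger with the background Green's tensor $G_{k}$, reduction to an interior equation dominated by $\nabla M$, resonant amplification of the $e_{n_{0}}^{(3)}$-mode through the dispersion symbol $\Lambda_{n_{0},j}(\omega)$, a Foldy--Lax system made diagonally dominant by $(\ref{3hts0})$, the a~priori $\mathbb{L}^{2}$ bound on $E|_{D}$ as a separate input, and the mixed reciprocity relation for $G_{k}^{\infty}$ to convert the far-field kernel into $V(z_{j},-\hat{x},q)$.

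Two small differences in execution are worth noting. First, the paper does not project on the full Helmholtz decomposition and analyse the three subspaces separately as you propose; instead it inverts $[I-\tau_{m}\nabla M_{D_{m}}]$ globally on $\mathbb{L}^{2}(D_{m})$ (noting $\tau_{m}^{-1}\notin\sigma(\nabla M_{D_{m}})$), integrates against the constant to form the polarisation tensor $\mathcal{C}_{m}=\int_{D_{m}}[I-\tau_{m}\nabla M_{D_{m}}]^{-1}(I)$, and then imports from \cite{AG-MS-Maxwell} both the resolvent bound $\mathcal{O}(a^{-h})$ and the explicit expansion of $\mathcal{C}_{m}$ in terms of $\int_{B}e_{n_{0}}^{(3)}$; your route would reprove those ingredients by hand. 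Second, the singular part the paper isolates in $G_{k}$ is $\frac{1}{\omega^{2}\mu\,\epsilon_{0}(y)}\nabla_{x}\nabla_{x}\Phi_{0}(x,y)$ (zero-frequency kernel divided by the \emph{background} permittivity), not the free-space Maxwell dyadic $\Pi_{k}$; this is what makes $\nabla M$ appear with the factor $\tau_{m}=(\epsilon_{0}(z_{m})-\epsilon_{p}(\omega))/\epsilon_{0}(z_{m})$ and puts the Newtonian piece directly into the remainder, so in this regime $N_{B_{j}}$ is not part of the leading operator.
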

\medskip
The expansion $(\ref{back-scattered-far-field})$ describes the field generated by the collection of nano-particles neglecting the mutual interaction between them, i.e. the Born approximation. This Born approximation is valid under the condition $(\ref{3hts0})$. Avoiding the mutual interaction between the nano-particles makes it easier to state an imaging functional for the optical inversion. The details are provided in Section \ref{SectionII}. The condition $(\ref{3hts0})$ can be relaxed. In this case the approximation $(\ref{back-scattered-far-field})$ becomes the Foldy-approximation (instead of the Born approximation) which involves the multiples scattering effects. Recall that the number of injected nano-particles is $\aleph \sim a^{-s}$, where $s$ satisfies $(\ref{3hts0})$. Therefore this set of nano-particles is dense in $\Omega$. Indeed, as $\aleph \sim d^{-3} \sim a^{-3t}$, recalling that $d \sim a^{t}$, then $s=3t$ and hence $(\ref{3hts0})$ becomes $t < \frac{1}{2} - \frac{h}{6}$. Since $h < 1$, then we need the condition $t < \frac{1}{3}$. Therefore the nano-particles can be distributed with a minimum distance between them of the order at least $d \ll a^{\frac{1}{3}}$.  
\medskip
\newline 
The remaining part of the manuscript is divided as follows. In Section \ref{SectionII}, we provide a detailed explanation of how the results in Theorem \ref{ThmResult} can be used to propose an algorithm in optical imaging to reconstruct the permittivity function $\epsilon_{0}(\cdot)$, in the bounded medium $\Omega$. In Section \ref{SecPrfThm} we prove Theorem \ref{ThmResult} while postponing the justification for the Mixed Electromagnetic Reciprocity Relation and an a-priori estimate related to the total electric field to the next section. In Section \ref{SectionIV}, which will be given as an appendix, we show the Mixed Electromagnetic Reciprocity Relation in Subsection \ref{P21}, and  derive an a-priori estimate of the electric total field in Subsection \ref{Subsection42}. Furthermore, Subsection \ref{SubSecIIEqua242} and Subsection \ref{SubsectionGT=G} are included to finish Subsection \ref{P21}.   

\section{Application to the optical imaging using plasmonic contrast agents}\label{SectionII}
We develop quantitative an imaging procedure that can reconstruct the permittivity function $\epsilon_{0}(\cdot)$ within the bounded medium $\Omega$, to be imaged, using the expansion given in Theorem \ref{ThmResult}, formula $(\ref{back-scattered-far-field})$. To use those expansions, the following condition is needed and we present it as a hypothesis to maintain greater generality.
\begin{Hypothesis}\label{Hyp}
There exist $n_{0} \in \mathbb{N}$ and $\theta, q \in \mathbb{S}^{2}$ such that 
\begin{equation}\label{CdtHyp}
    \langle V\left(\cdot, \theta, q \right), \int_{B} e^{(3)}_{n_{0}}(y) \, dy \rangle \; \neq \; 0, 
\end{equation}
where $V\left(\cdot, \theta, q \right)$ is solution to $(\ref{eq:electromagnetic_scattering})$, with\footnote{The solution $V\left(\cdot, \theta, q \right)$ corresponds to the unperturbed medium case, i.e. before injecting the nano-particle $D$.} $\varepsilon(\cdot) \, = \, \epsilon_{0}(\cdot)$ in $\mathbb{R}^{3}$.
\end{Hypothesis}
\medskip
The remark below is suggested by the hypothesis above.
\begin{remark}  Three remarks are in order.
\begin{enumerate}
    \item[]
    \item Similarly, condition $(\ref{CdtHyp})$ implies that the permittivity function $\epsilon_{0}(\cdot)$ cannot be reconstructed near points where the total field $V\left(\cdot, \theta, q \right)$ is vanishing. This is quite reasonable since the permittivity is attached to the total field V, see the equation $(\ref{eq:electromagnetic_scattering})$, .
    \item[]
    \item In the case of a unit ball, i.e. $B \, = \, B(0,1)$, an explicit computation of $\int_{B} e^{(3)}_{1}(y) \, dy$ has been given in \cite[Section 4.5.3]{Gh-Thesis}. 
    \item[]
    \item In Hypothesis \ref{Hyp}, we used the notation
    \begin{equation*}
    \langle V\left(\cdot, \theta, q \right), \int_{B} e^{(3)}_{n_{0}}(y) \, dy \rangle \; = \; \sum_{m} \, \langle V\left(\cdot, \theta, q \right), \int_{B} e^{(3)}_{n_{0},m}(y) \, dy \rangle, 
    \end{equation*}
where $e^{(3)}_{n_{0},m}(\cdot)$ are such that
\begin{equation*}
    \nabla M_{B} \left( e^{(3)}_{n_{0},m} \right) \, = \, \lambda^{(3)}_{n_{0}}\left( B \right) \, e^{(3)}_{n_{0},m}, \quad \text{in} \; B.
\end{equation*}

\end{enumerate}

\end{remark}
\bigskip

Under the above hypothesis, we start by recalling $(\ref{back-scattered-far-field})$ that 
\begin{eqnarray*}
\nonumber
&& \langle \left( E^{\infty} \, - \, V^{\infty}\right)(- \, \theta, \theta, q, \omega) , \left( \theta \times q \right) \rangle \\ &=& - \frac{\mu \, a^{3}}{4 \, \pi}  \, \sum_{j=1}^{\aleph}  \, \frac{\omega^{2}_{P_{\ell},n_{0},j} \, \epsilon_{0}(z_{j}) \, \left(\epsilon_{0}(z_{j}) \, - \, \Lambda_{n_{0},j}\left( \omega_{P_{\ell},n_{0},j} \right) \right)}{\lambda_{n_{0}}^{(3)}(B) \, \Lambda_{n_{0},j}\left( \omega \right)} \, \left(  \langle V\left(z_{j},  \theta, q \right), \int_{B}  e_{n_{0}}^{(3)}(y)  \, dy \rangle \right)^{2} \\ &+& \mathcal{O}\left( a^{\min\left( (3-s), (6-3t-2h-\frac{3s}{2}), (4 - h - s) \right)}\right), 
\end{eqnarray*}
which, by keeping only its dominant term, becomes
\begin{equation}\label{C-F-F-B-S-D}
\langle \left( E^{\infty} \, - \, V^{\infty}\right)(- \, \theta, \theta, q, \omega) , \left( \theta \times q \right) \rangle  \, \simeq \,- \frac{\mu \, a^{3}}{4 \, \pi \, \lambda_{n_{0}}^{(3)}(B) }  \, \sum_{j=1}^{\aleph}  \, \mathcal{J}\left(\omega, z_{j} \right), 
\end{equation}
where the imaging functional $\mathcal{J}\left(\cdot, \cdot \right)$, depending on both the frequency parameter $\omega$ and the location points $\left\{ z_{j} \right\}_{j=1}^{\aleph}$, is given by  
\begin{equation}\label{ImgFct}
\mathcal{J}\left(\omega, z_{j} \right) \, := \, \frac{\omega^{2}_{P_{\ell},n_{0},j} \, \epsilon_{0}(z_{j}) \, \left(\epsilon_{0}(z_{j}) \, - \, \Lambda_{n_{0},j}\left( \omega_{P_{\ell},n_{0},j} \right) \right)}{ \Lambda_{n_{0},j}\left( \omega \right)} \, \left(  \langle V\left(z_{j},  \theta, q \right), \int_{B}  e_{n_{0}}^{(3)}(y)  \, dy \rangle \right)^{2},
\end{equation}
with $\Lambda_{n_{0},j}\left( \cdot \right)$ being the dispersion equation given by $(\ref{Thm-Dis-Eq})$. It is evident from the imaging functional expression, see  $(\ref{ImgFct})$, that the reconstruction of the permittivity function $\epsilon_{0}(\cdot)$ on the location points, i.e. $\left\{ \epsilon_{0}(z_{j}) \right\}_{j=1}^{\aleph}$, can be derived from the reconstruction of the imaging functional on the same location points, i.e. $\left\{ \mathcal{J}\left(\omega, z_{j} \right) \right\}^{\aleph}_{j=1}$, with 
\begin{equation*}\label{Freq-Int}
    \omega \, \in \, \mathbf{I} \, := \, \left( \omega_{0}\, , \, \sqrt{\omega_{0}^{2} \, + \, \frac{\omega_{p}^{2}}{\lambda^{(3)}_{n_{0}}(B)}} \right).
\end{equation*}
More details will be provided later in Subsection \ref{ProofThemSubSecII}. Furthermore, the contrast between the two far-fields $V^{\infty}(\cdot)$ and $E^{\infty}(\cdot)$, given by $(\ref{C-F-F-B-S-D})$, measured in the back-scattered direction, i.e. $\hat{x} \, = \, - \, \theta$, provides us with 
\begin{equation}\label{Mean-Img-Fct}
    \mathcal{F}(\omega, \aleph) \, := \, \sum_{j=1}^{\aleph}  \, \mathcal{J}\left(\omega, z_{j} \right), 
\end{equation}
up to a known multiplicative constant given by $-\dfrac{\mu \, a^{3}}{4 \, \pi \, \lambda_{n_{0}}^{(3)}(B) }$. Unfortunately, we cannot determine the $\aleph$ imaging functional $\mathcal{J}\left(\omega, z_{j} \right)$, for $1 \leq j \leq \aleph$, from the mean (sum) of the imaging functional $\mathcal{F}(\omega, \aleph)$, given by $(\ref{Mean-Img-Fct})$, as the problem is not uniquely solvable. Hence, the reconstruction of the permittivity function $\epsilon_{0}(\cdot)$ on the location points $\left\{ z_{j} \right\}_{j=1}^{\aleph}$ cannot be done, in a straightforward manner, as explained above. In order to solve this issue, we propose employing an iterative method that requires knowledge of $\left\{ \mathcal{F}(\omega, \ell) \right\}_{\ell = 1}^{\aleph}$,  given by $(\ref{Mean-Img-Fct})$. In this case, each imaging functional $\mathcal{J}\left(\omega, z_{j} \right)$ can be obtained as
\smallskip
\newline
\begin{eqnarray*} 
    \mathcal{J}\left(\omega, z_{1} \right) \, &=& \, \mathcal{F}(\omega, 1) \, = \, \text{Initial Reconstructed Term}, \\
    \mathcal{J}\left(\omega, z_{j} \right) \, &=& \, \mathcal{F}(\omega, j) \, - \, \mathcal{F}(\omega, j-1), \quad \text{for} \quad 2 \leq j \leq \aleph. 
\end{eqnarray*}
\smallskip
\newline
Based on the explanations mentioned above, our proposed imaging procedures follow as below.
\begin{enumerate}
    \item[] 
    \item Step 1)\label{STEPI}. Reconstructing $\mathcal{J}\left(\omega, z_{1} \right)$.
\begin{enumerate}
    \item[]
    \item \label{StepIa} Collect the far-field before injecting any plasmonic nano-particle inside $\Omega$, in the back-scattered direction at a single incident wave $\theta$, i.e. $V^{\infty}(- \, \theta, \theta, q, \omega)$, where $\omega \in \mathbf{I}$.
    \item[]
    \item \label{StepIb} Collect the far-field after injecting the first plasmonic nano-particle $D_{1}$, in the back-scattered direction at a single incident wave $\theta$, i.e. $E^{\infty}(- \, \theta, \theta, q, \omega)$, where $\omega \in \mathbf{I}$.
    \item[] 
\end{enumerate}
At this stage, since we have plugged only one single plasmonic nano-particle inside $\Omega$, the equation $(\ref{C-F-F-B-S-D})$ will collapse to the following single term
\begin{equation*}
\langle \left( E^{\infty} \, - \, V^{\infty}\right)(- \, \theta, \theta, q, \omega) , \left( \theta \times q \right) \rangle  \, \simeq \, -\frac{\mu \, a^{3}}{4 \, \pi \, \lambda_{n_{0}}^{(3)}(B) }  \, \mathcal{J}\left(\omega, z_{1} \right), 
\end{equation*}
where the left hand side is a known (measured) term, see for instance $(\ref{StepIa})$ and $(\ref{StepIb})$. In addition, as stated previously, the constant $-\dfrac{\mu \, a^{3}}{4 \, \pi \, \lambda_{n_{0}}^{(3)}(B) }$ appearing on the right hand side is known. Consequently, we deduce the reconstruction of the first imaging functional
\begin{equation}\label{Rec-J1}
    \omega \longrightarrow \mathcal{J}\left(\omega, z_{1} \right), \quad \text{with} \quad \omega \in \mathbf{I}.
\end{equation}
\item[]
\item Step 2)\label{STEPII}. Reconstructing $\epsilon_{0}(z_{1})$. \\ 
\newline 
As we have reconstructed the first imaging functional $\mathcal{J}\left(\omega, z_{1} \right)$, see for instance $(\ref{Rec-J1})$, and by recalling its analytical expression given by $(\ref{ImgFct})$,  
\begin{equation*}
\mathcal{J}\left(\omega, z_{1} \right) \, = \, \frac{\omega^{2}_{P_{\ell},n_{0},1} \, \epsilon_{0}(z_{1}) \, \left(\epsilon_{0}(z_{1}) \, - \, \Lambda_{n_{0},j}\left( \omega_{P_{\ell},n_{0},1} \right) \right) \, \left(  \langle V\left(z_{1},  \theta, q \right), \int_{B}  e_{n_{0}}^{(3)}(y)  \, dy \rangle \right)^{2}}{ \Lambda_{n_{0},1}\left( \omega \right)},
\end{equation*}
which can be rewritten, by introducing the proportional symbol $\boldsymbol{\alpha}$, like 
\begin{equation}\label{J1}
\mathcal{J}\left(\omega, z_{1} \right) \, \boldsymbol{\alpha} \, \frac{1}{ \Lambda_{n_{0},1}\left( \omega \right)}, \quad \omega \in \mathbf{I}, 
\end{equation}
since, with respect to the parameter $a$, we know that  
\begin{equation*}
    \omega^{2}_{P_{\ell},n_{0},1} \, \epsilon_{0}(z_{1}) \, \left(\epsilon_{0}(z_{1}) \, - \, \Lambda_{n_{0},j}\left( \omega_{P_{\ell},n_{0},1} \right) \right) \, \left(  \langle V\left(z_{1},  \theta, q \right), \int_{B}  e_{n_{0}}^{(3)}(y)  \, dy \rangle \right)^{2} \, \sim \, 1 . 
\end{equation*}
Clearly, from $(\ref{J1})$, we observe that the argmax\footnote{We recall that the argmax is the input point at which a function's output value is maximized.} of the function $\mathcal{J}\left(\cdot, z_{1} \right)$ is exactly the quasi-root\footnote{As the dispersion equation contains a small imaginary part, we refer to the root of its real part as a quasi-root.} of the first dispersion equation $\Lambda_{n_{0},1}\left( \cdot \right)$, given by $(\ref{Thm-Dis-Eq})$, and vice versa. Thus, we can estimate the first plasmonic resonance $\omega_{P_{\ell},n_{0},1}$. Moreover, by solving $(\ref{Thm-Dis-Eq})$, we deduce 
\begin{equation*}
    \epsilon_{0}\left( z_{1} \right) \, = \, \epsilon_{p}\left( \omega_{P_{\ell},n_{0},1} \right) \, \frac{\lambda_{n_{0}}^{(3)}\left(B\right)}{\left( \lambda_{n_{0}}^{(3)}\left(B\right) \, - \, 1 \right)}. 
\end{equation*}
This justify the reconstruction of $\epsilon_{0}\left( z_{1} \right)$. 
\item[]
\item Step 3)\label{StepIII}. Reconstructing $\mathcal{J}\left(\omega, z_{j} \right)$ and $\epsilon_{0}\left( z_{j} \right)$, for $2 \leq j \leq \aleph$. \\
\newline 
As explained through Step \ref{STEPI}, by injecting the first plasmonic nano-particle $D_{1}$ and measuring the contrast between the far-fields that we obtain before injecting $D_{1}$ and after injecting $D_{1}$, we can reconstruct the first imaging functional $\mathcal{J}\left(\cdot, z_{1} \right)$, see for instance $(\ref{Rec-J1})$. After that, we inject the second plasmonic nano-particle $D_{2}$ and we return to $(\ref{C-F-F-B-S-D})$, to obtain  
\begin{equation*}
\langle \left( E^{\infty} \, - \, V^{\infty}\right)(- \, \theta, \theta, q, \omega) , \left( \theta \times q \right) \rangle  \, \simeq \,-\, \frac{\mu \, a^{3}}{4 \, \pi \, \lambda_{n_{0}}^{(3)}(B) }  \, \sum_{j=1}^{2}  \, \mathcal{J}\left(\omega, z_{j} \right), 
\end{equation*}
or, equivalently, 
\begin{equation*}
\mathcal{J}\left(\omega, z_{2} \right)  \, \simeq \, - \, \mathcal{J}\left(\omega, z_{1} \right) \, - \, \frac{4 \, \pi \, \lambda_{n_{0}}^{(3)}(B)}{\mu \, a^{3}} \, \langle  \left( E^{\infty} \, - \, V^{\infty}\right)(- \, \theta, \theta, q, \omega) ; \left( \theta \times q \right) \rangle.
\end{equation*}
The right hand side of the formula above is already known (measured). Therefore, we deduce the reconstruction of the second imaging functional, i.e. 
\begin{equation}\label{Rec-J2}
    \omega \longrightarrow \mathcal{J}\left(\omega, z_{2} \right), \quad \text{with} \quad \omega \in \mathbf{I}.
\end{equation}
Additionally, by using the same arguments as those used in Step (\ref{STEPII}), we can determine the reconstruction of $\epsilon_{0}(z_{2})$ from $\mathcal{J}\left(\cdot, z_{2} \right)$, see for instance $\left( \ref{Rec-J2} \right)$. Hence, we have reconstructed 
\begin{equation*}
    \left(\mathcal{J}\left(\cdot, z_{2} \right); \epsilon_{0}(z_{2}) \right). 
\end{equation*}
Finally, through an induction process, as explained in Step (\ref{STEPI})-Step (\ref{StepIII}), we can reconstruct 
\begin{equation*}
    \left(\mathcal{J}\left(\cdot, z_{j} \right); \epsilon_{0}(z_{j}) \right)_{j=2}^{\aleph}. 
\end{equation*}
\item[]
\item Step 4). Reconstructing $\epsilon_{0}(\cdot)$ within $\Omega$. \\
\newline
To reconstruct the permittivity function $\epsilon_{0}(\cdot)$, inside $\Omega$, or at least approximate it, from its already reconstructed point-wise values $\left\{ \epsilon_{0}\left( z_{j} \right) \right\}_{j=1}^{\aleph}$, we can use the \textit{Dual Reciprocity Method}. Other interpolation methods can also be used. For a more detailed study of the \textit{Dual Reciprocity Method} and its application, \cite{Iran, cruse2012} and \cite{partridge2012dual} are recommended. This is a description to do so. \\ \newline 
We start by assuming that $\epsilon_{0}\left(\cdot\right)$ is smooth enough and we approximate it by a linear combination of a finite number of basis functions\footnote{The accuracy of the approximation $(\ref{Vexpres})$ depends on the choice of $\left\{ f_{k}(\cdot) \right\}_{k=1}^{\aleph}$ and the number $\aleph$.}, i.e.  
\begin{equation}\label{Vexpres}
    \epsilon_{0}\left( z \right) \, = \, \sum_{k=1}^{\aleph} \beta_{k} \, f_{k}(z),\quad z \in \Omega,
\end{equation}
with $\left(\beta_{1}, \cdots, \beta_{\aleph} \right) \in \mathbb{C}^{\aleph}$ being a vector to be determined,  and $\left(f_{1}(\cdot), \cdots, f_{\aleph}(\cdot)\right)$ are suitably chosen basis functions. In the literature, there are several types of basis functions that have been created to handle with the \textit{Dual Reciprocity Method}. Among them, without being exhaustive, we can list
\begin{enumerate}
    \item[]
    \item  A linear basis function given by $f_{k}(\cdot)\, = \, 1 \, + \, \left\vert \cdot - x_{k} \right\vert$;
    \item[] 
    \item  A Gaussian basis function given by $f_{k}(\cdot) \, = \, e^{\left\vert \cdot - x_{k} \right\vert^{2}}$;
    \item[] 
    \item  A thin plate spline basis function given by $f_{k}(\cdot) \, = \, \left\vert \cdot - x_{k} \right\vert^{2}\ln \left(\left\vert \cdot - x_{k} \right\vert \right)$,
    \item[] 
\end{enumerate}
where the set of points $\left\{ x_{k} \right\}_{k=1}^{\aleph}$, called collocation points, is generated randomly inside $\Omega$. Thus, we know the set of points $\left\{ x_{k} \right\}_{k=1}^{\aleph}$. \\
\newline 
Now, by choosing a type for the basis functions, generating randomly a set of collocations points $\left\{ x_{k} \right\}_{k=1}^{\aleph}$ inside $\Omega$ and evaluating both sides of $(\ref{Vexpres})$ at the a-priori assumed known location $\left\{ z_{j} \right\}_{j=1}^{\aleph}$, we obtain the following algebraic system, 
\begin{equation}\label{Step4AS}
    \begin{pmatrix}
        f_{1}(z_{1}) & \cdots & f_{\aleph}(z_{1}) \\
        \vdots & \ddots & \vdots \\
        f_{1}(z_{\aleph}) & \cdots & f_{\aleph}(z_{\aleph})
    \end{pmatrix} \cdot \begin{pmatrix}
        \beta_{1} \\
        \vdots \\
        \beta_{\aleph}
    \end{pmatrix} \, = \, \begin{pmatrix}
        \epsilon_{0}(z_{1}) \\
        \vdots \\
        \epsilon_{0}(z_{\aleph})
    \end{pmatrix}.
\end{equation}
Thanks to Step (\ref{STEPI}) - Step (\ref{StepIII}), the right hand side of $(\ref{Step4AS})$ is a known vector. Moreover, by its construction, the matrix appearing on the left hand side of $(\ref{Step4AS})$ can be computed explicitly. Then, by solving $(\ref{Step4AS})$, we can determine the unknown vector $\left(\beta_{1}, \cdots, \beta_{\aleph} \right)$. The final stage involves inserting the obtained vector into $(\ref{Vexpres})$, to come up with an approximation of the permittivity function $\epsilon_{0}(\cdot)$ inside $\Omega$. 
\end{enumerate} 
\medskip
We conclude this section by noting the following remark. 
\begin{remark}
    When the $\aleph$-plasmonic nano-particles are plugged in $\Omega$ simultaneously, and the frequency $\omega$ in $\mathbf{I}$ is varied, it can be observed that the function  
    \begin{equation*}
        \omega \longrightarrow \langle \left( E^{\infty} \, - \, V^{\infty}\right)(- \, \theta, \theta, q, \omega) , \left( \theta \times q \right) \rangle, 
    \end{equation*}
    accepts $\aleph$ peaks. Unfortunately, it is unclear to us how to match the $j^{th}$ peak to the $j^{th}$ plasmonic resonance, for $1 \leq j \leq \aleph$. This makes it impossible to reconstruct the permittivity function $\epsilon_{0}(\cdot)$ using our proposed imaging method while injecting all the nano-particles at once. This is the main reason why we propose, instead, to inject the nano-particles one after another.
\end{remark}
\section{Proof of Theorem \ref{ThmResult}}\label{SecPrfThm}
We divide this section into two subsections. In the first subsection, we will focus on deriving the dominant term related to the optical electric field and show its dependence on the eigen-system related to the Magnetization operator, the permittivity of the medium to be imaged, i.e $\epsilon_{0}(\cdot)$, and the permittivity of the used plasmonic nano-particles, i.e. $\epsilon_{p}(\omega)$. In the second subsection, we shall use the results from the first subsection to estimate the scattered fields and total fields connected to the electric field.
\subsection{Deriving the dominant term of the optical field.}
As shown in \cite[Section 3.2]{AG-MS-Maxwell}, the solution of $(\ref{eq:electromagnetic_scattering})$ can be written as the solution to the following Lippmann-Schwinger system of equations
\begin{equation}\label{SK0}
u_{1}(x) + \omega^{2}  \, \int_{D} G_{k}(x,y) \cdot u_{1}(y) \, (\bm{n}_{0}^{2}(y) - \bm{n}^{2}(y) ) \, dy = u_{0}(x), \quad x \in \mathbb{R}^{3}, 
\end{equation} 
where $u_0(\cdot)$ (respectively. $u_1(\cdot)$)  denotes the electromagnetic field before (respectively. after) injecting the nano-particles, $D \, = \, \overset{\aleph}{\underset{j=1}\cup} D_{j}$, with $\aleph \sim a^{-s}$, $s > 0$, and $G_{k}(\cdot,\cdot)$ is solution to $(\ref{Eq0401})$.
\medskip
\newline
To give sense of the integral appearing on the left hand side of $(\ref{SK0})$, i.e.
\begin{equation}\label{Int(x)}
Int(x) \, := \, \int_{D} G_{k}(\cdot,y) \cdot u_{1}(y) \, (\bm{n}_{0}^{2}(y) - \bm{n}^{2}(y) ) \, dy, \quad x \in \mathbb{R}^{3}, 
\end{equation}
we start by recalling that 
\begin{equation}\label{SG}
    G_{k}(x, \cdot) \; = \; \Pi_{k}(x, \cdot) \, + \, \Gamma(x, \cdot), 
\end{equation}
see \cite[Subsection 3.2]{AG-MS-Maxwell}, where 
\begin{equation}\label{DefPi=}
    \Pi_{k}(x,y) \, := \, \frac{1}{k^{2}} \, \underset{y}{\nabla} \, \underset{y}{\nabla} \, \Phi_{k}(x,y) \, + \, \Phi_{k}(x,y) \, I_{3},  
\end{equation}
and $\Gamma(x, \cdot) \, \in \, \mathbb{L}^{\frac{3}{2} \, - \, \delta}\left( \Omega \right)$, with $\delta$ being a small positive real number. Then, using $(\ref{SG}), (\ref{DefPi=})$ and the definition of both the Magnetization operator and the Newtonian operator in $(\ref{DefNDefMk})$, $(\ref{Int(x)})$ should be understood as
\begin{eqnarray}\label{Int(x)-Gamma}
\nonumber
Int(x) \, &=& \, - \frac{1}{k^{2}} \, \nabla M^{k}_{D}\left(  u_{1}(\cdot) \, (\bm{n}_{0}^{2}(\cdot) - \bm{n}^{2}(\cdot) )\right)(x) \, + \, N^{k}_{D}\left(  u_{1}(\cdot) \, (\bm{n}_{0}^{2}(\cdot) - \bm{n}^{2}(\cdot) )\right)(x) \\ &+& \,  \int_{D} \Gamma(x,y) \cdot u_{1}(y) \, (\bm{n}_{0}^{2}(y) - \bm{n}^{2}(y) ) \, dy, \quad x \in \mathbb{R}^{3}.
\end{eqnarray}
It is clear that the first term and the second term on the right hand side of $(\ref{Int(x)-Gamma})$ are well defined. In addition, the existence of the third term on the right hand side of $(\ref{Int(x)-Gamma})$ can be deduced by using the fact that $\Gamma(x, \cdot) \, \in \, \mathbb{L}^{\frac{3}{2} \, - \, \delta}\left( \Omega \right)$. This provides an explanation of how $(\ref{Int(x)})$ can be interpreted. Therefore, from now on whenever we use the notation $(\ref{Int(x)})$, it should be understood as $(\ref{Int(x)-Gamma})$.  
\bigskip
\newline
Recalling the definition of the index of refraction $\bm{n}(\cdot)$, see $(\ref{Def-Index-Ref})$, we rewrite $(\ref{SK0})$ as
\begin{equation}\label{SK}
u_{1}(x) + \omega^{2} \, \mu \, \int_{D} G_{k}(x,y) \cdot u_{1}(y) \, (\epsilon_{0}(y)-\epsilon_{p}(\omega)) \, dy = u_{0}(x), \quad x \in \mathbb{R}^{3}, 
\end{equation} 
where $\epsilon_{p}(\cdot)$ is the given permittivity of the injected nano-particles, through the Lorentz model, by $(\ref{Lorentz-model})$. Then, by restricting $x \in D_{m}$, we rewrite $(\ref{SK})$ as 
\begin{equation*}
u_{1}(x) \, + \, \omega^{2} \, \mu \, \int_{D_{m}} G_{k}(x,y) \cdot u_{1}(y) \, (\epsilon_{0}(y)-\epsilon_{p}(\omega)) \, dy \, + \, \omega^{2} \, \mu \, \sum_{j=1 \atop j \neq m}^{\aleph} \int_{D_{j}} G_{k}(x,y) \cdot u_{1}(y) \, (\epsilon_{0}(y)-\epsilon_{p}(\omega)) \, dy \, = \, u_{0}(x).
\end{equation*}
By expanding the function $(\epsilon_{0}(\cdot) \, - \, \epsilon_{p}(\omega))$ near the centers, we obtain  \begin{eqnarray}\label{SL}
\nonumber
u_{1}(x) &+& \omega^{2} \, \mu \, (\epsilon_{0}(z_{m})-\epsilon_{p}(\omega)) \, \int_{D_{m}} G_{k}(x,y) \cdot u_{1}(y) \,  dy \, + \, \omega^{2} \, \mu \, \sum_{j=1 \atop j \neq m}^{\aleph} \, (\epsilon_{0}(z_{j})-\epsilon_{p}(\omega)) \, \int_{D_{j}} G_{k}(x,y) \cdot u_{1}(y)  \, dy \\ \nonumber &=& u_{0}(x) - \omega^{2} \, \mu \, \int_{D_{m}} G_{k}(x,y) \cdot u_{1}(y)  \, \int_{0}^{1} \nabla \epsilon_{0}(z_{m}+t(y-z_{m})) \cdot (y - z_{m}) \, dt  \, dy \\
&-& \omega^{2} \, \mu \, \sum_{j=1 \atop j \neq m}^{\aleph}  \, \int_{D_{j}} G_{k}(x,y) \cdot u_{1}(y) \, \int_{0}^{1} \nabla \epsilon_{0}(z_{j}+t(y-z_{j})) \cdot (y - z_{j}) \, dt \, dy.
\end{eqnarray}
Thanks to \cite[Theorem 2.1, Formula (2.3)]{AG-MS-Maxwell}, regarding the Green's kernel $G_{k}(\cdot,\cdot)$, we have the following expansion\footnote{To comprehend $(\ref{Int(x)})$, we made the choice to decompose the Green's kernel $G_{k}(\cdot, \cdot)$ as indicated by $(\ref{SG})$ rather than $(\ref{SLI})$. This is motivated by our desire to write $(\ref{Int(x)})$ in a more explicit form, see $(\ref{Int(x)-Gamma})$. Nevertheless, the decomposition $(\ref{SLI})$ can be utilized to comprehend $(\ref{Int(x)})$.}
\begin{equation}\label{SLI}
    G_{k}(x,y) \, = \, \frac{1}{\omega^{2} \, \mu \, \epsilon_{0}(y)} \, \underset{x}{\nabla} \, \underset{x}{\nabla} \Phi_{0}(x,y) \, + \, \Gamma(x,y), \quad x \neq y, 
\end{equation}
where the term $\Gamma(\cdot, \cdot)$ is given by   
\begin{equation}\label{SLII}
    \Gamma(x,y) \, = \, \frac{- \, 1}{\omega^{2} \, \mu \, \left( \epsilon_{0}(y) \right)^{2}} \underset{x}{\nabla} \underset{x}{\nabla} M\left( \Phi_{0}(\cdot, y) \nabla \epsilon_{0}(y) \right)(x) \, + \, \Xi(x,z) ,  
\end{equation}
with $\nabla M(\cdot)$ being the Magnetization operator defined by $(\ref{DefNDefM})$ and $\Xi(\cdot,z) \in \mathbb{L}^{3-\delta}(D)$, where $\delta$ is a small positive parameter. Then, by denoting $$\tau_{m} := \frac{(\epsilon_{0}(z_{m}) - \epsilon_{p}(\omega))}{\epsilon_{0}(z_{m})},$$
using $(\ref{SLI})$ and $(\ref{SLII})$, we derive from $(\ref{SL})$ the following equation 
\begin{equation}\label{HA1}
    \left[ I \, - \, \tau_{m} \, \nabla M_{D_{m}} \right](u_{1})(x) \, + \, \omega^{2} \, \mu \, \sum_{j=1 \atop j \neq m}^{\aleph}  \tau_{j} \, \epsilon_{0}(z_{j}) \, \int_{D_{j}} G_{k}(x,y) \cdot u_{1}(y) \, dy \, = \, u_{0}(x) \, + \, Err_{0}(x),
\end{equation}
where
\begin{eqnarray}\label{FM1}
\nonumber
     Err_{0}(x) &:=& - \, \omega^{2} \, \mu \, \sum_{j=1 \atop j \neq m}^{\aleph} \int_{D_{j}} G_{k}(x,y) \cdot u_{1}(y) \int_{0}^{1} \nabla(\epsilon_{0})(z_{j}+t(y-z_{j})) \cdot (y - z_{j}) \, dt \, dy 
    \\ \nonumber &-& \, \omega^{2} \, \mu \,  \int_{D_{m}} G_{k}(x,y) \cdot u_{1}(y) \int_{0}^{1} \nabla(\epsilon_{0})(z_{m}+t(y-z_{m})) \cdot (y - z_{j}) \, dt \, dy \\ \nonumber
    &-& \omega^{2} \, \mu \, \left( \epsilon_{0}(z_{m}) - \epsilon_{p}(\omega) \right) \, \int_{D_{m}} \Gamma(x,y) \cdot u_{1}(y) \,  dy \\
    &+& (\epsilon_{0}(z_{m}) - \epsilon_{p}(\omega)) \, \nabla M_{D_{m}} \left(u_{1}(\cdot) \, \int_{0}^{1} \nabla(\epsilon_{0}^{-1})(z_{m}+t(\cdot - z_{m})) \cdot (\cdot - z_{m}) \, dt \right)(x).
\end{eqnarray}
Successively, by taking the inverse of the operator\footnote{The invertibility of the operator $\left[ I \, - \, \tau_{m} \, \nabla M_{D_{m}} \right]$ is a consequence of the fact that $\tau^{-1}_{m} \notin \sigma\left( \nabla M_{D_{m}} \right) \cup \left\{ 1 \right\}$, where $\sigma\left(\cdot \right)$ stands for the spectrum set.} $\left[ I \, - \, \tau_{m} \, \nabla M_{D_{m}} \right]$ on the both sides of the equation $(\ref{HA1})$, integrating over the domain $D_{m}$ and using the definition
\begin{equation*}
    W_{m}(\cdot) \, := \, \left[ I \, - \, \tau_{m} \, \nabla M_{D_{m}} \right]^{-1}(I)(\cdot), \quad \text{in} \; D_{m}, 
\end{equation*}
we obtain 
\begin{eqnarray}\label{HA2}
\nonumber
       \int_{D_{m}} u_{1}(x) \, dx &+& \omega^{2} \, \mu \, \sum_{j=1 \atop j \neq m}^{\aleph}  \tau_{j} \, \epsilon_{0}(z_{j}) \, \int_{D_{m}} W_{m}(x) \cdot \int_{D_{j}} G_{k}(x,y) \cdot u_{1}(y) \, dy \, dx \\ &=& \int_{D_{m}} W_{m}(x) \cdot u_{0}(x) \, dx \, + \, \int_{D_{m}} W_m(x) \cdot Err_{0}(x) \, dx.
\end{eqnarray}
Next, by setting 
\begin{equation*}
    \mathcal{C}_{m} \, := \, \int_{D_{m}} W_{m}(x) \, dx, 
\end{equation*}
and expanding both the Green's kernel $G_{k}(\cdot, \cdot)$ and the vector field $u_{0}(\cdot)$ near the centers, we derive from $(\ref{HA2})$ the following algebraic system 
\begin{equation}\label{as0354}
    \mathcal{C}_{m}^{-1} \cdot \int_{D_{m}} u_{1}(x) \, dx \, + \, \omega^{2} \, \mu \, \sum_{j = 1 \atop j \neq m}^{\aleph} \tau_{j} \, \epsilon_{0}(z_{j}) \, G_{k}(z_{m}, z_{j}) \cdot  \mathcal{C}_{j} \cdot \mathcal{C}_{j}^{-1} \cdot \int_{D_{j}} u_{1}(y) \, dy \, = \, u_{0}(z_{m}) \, + \, \mathcal{C}_{m}^{-1} \cdot Error_{1,m},
\end{equation}
where
\begin{eqnarray}\label{Errormexpression}
\nonumber
    Error_{1,m} &:=& - \, \omega^{2} \, \mu \, \sum_{j=1 \atop j \neq m}^{\aleph} \tau_{j} \, \epsilon_{0}(z_{j}) \,  \int_{D_{m}} W_{m}(x) \cdot \int_{D_{j}} \int_{0}^{1} \nabla G_{k}(x,z_{j}+t(y-z_{j})) \cdot (y - z_{j}) \, dt \cdot u_{1}(y) \, dy \, dx \\ \nonumber
    &-&  \omega^{2} \, \mu \, \sum_{j=1 \atop j \neq m}^{\aleph} \tau_{j} \, \epsilon_{0}(z_{j}) \,  \int_{D_{m}} W_{m}(x) \cdot  \int_{0}^{1} \nabla G_{k}(z_{m}+t(x-z_{m}),z_{j}) \cdot (x - z_{m}) \, dt \, dx \cdot \int_{D_{j}} u_{1}(y) \, dy  \\ 
    &+&  \int_{D_{m}} W_{m}(x) \cdot  \int_{0}^{1} \nabla u_{0}(z_{m}+t(x-z_{m})) \cdot (x - z_{m}) \, dt \, dx \, + \, \int_{D_{m}} W_{m}(x) \cdot Err_{0}(x) \, dx.
\end{eqnarray}
Moreover, for $(\ref{as0354})$, we correspond to the following matrix form,
\begin{equation*}
    \begin{pmatrix}
       \mathcal{Q}_{1} \\
        \mathcal{Q}_{2} \\
        \vdots \\
        \mathcal{Q}_{\aleph}
    \end{pmatrix} \, + \, \omega^{2} \, \mu \, \mathcal{M}  \cdot   \begin{pmatrix}
       \mathcal{Q}_{1} \\
        \mathcal{Q}_{2} \\
        \vdots \\
        \mathcal{Q}_{\aleph}
    \end{pmatrix} \, = \, \begin{pmatrix}
       u_{0}(z_{1}) \\
       u_{0}(z_{2}) \\
        \vdots \\
       u_{0}(z_{\aleph})
    \end{pmatrix} \, + \, \begin{pmatrix}
        \mathcal{C}_{1}^{-1} \cdot Error_{1,1} \\
        \mathcal{C}_{2}^{-1} \cdot Error_{1,2} \\
        \vdots \\
        \mathcal{C}_{\aleph}^{-1} \cdot Error_{1,\aleph}
    \end{pmatrix},
\end{equation*}
where $\mathcal{M}$ is the matrix given by 
\begin{equation*}
    \mathcal{M} := \begin{pmatrix}
        0 & G_{k}(z_{1},z_{2}) \cdot \tau_{2} \epsilon_{0}(z_{2}) \, \mathcal{C}_{2} & \cdots & G_{k}(z_{1},z_{\aleph}) \cdot \tau_{\aleph} \epsilon_{0}(z_{\aleph}) \, \mathcal{C}_{\aleph} \\
        G_{k}(z_{2},z_{1}) \cdot \tau_{1} \, \epsilon_{0}(z_{1}) \mathcal{C}_{1} & 0 & \cdots & G_{k}(z_{2},z_{\aleph}) \cdot \tau_{\aleph} \epsilon_{0}(z_{\aleph}) \, \mathcal{C}_{\aleph} \\
        \vdots & \vdots & \ddots & \vdots \\
        G_{k}(z_{\aleph},z_{1}) \cdot \tau_{1} \epsilon_{0}(z_{1})\, \mathcal{C}_{1} & G_{k}(z_{\aleph},z_{2}) \cdot \tau_{2} \epsilon_{0}(z_{2})\, \mathcal{C}_{2} & \cdots & 0 \\
    \end{pmatrix},
\end{equation*}
$Error_{1,m}$ is given by $(\ref{Errormexpression})$, and 
\begin{equation}\label{QCU}
    \mathcal{Q}_{m} \, := \, \mathcal{C}_{m}^{-1} \cdot \int_{D_{m}} u_{1}(x) \, dx, \quad 1 \leq m \leq \aleph. 
\end{equation}
To prove the invertibility of the above algebraic system, it is sufficient to derive a condition such that the matrix appearing on the L.H.S is a diagonal dominant block matrix, i.e. 
\begin{equation}\label{Cdt0508}
    \omega^{2} \, \mu \, \sum_{j=1 \atop j \neq m}^{\aleph} \left\vert G_{k}(z_{m},z_{j}) \cdot \tau_{j} \epsilon_{0}(z_{j}) \, \mathcal{C}_{j} \right\vert \, < \, \left\vert I_{3} \right\vert.
\end{equation}
To do this, we have 
\begin{equation*}
    \omega^{2} \, \mu \, \sum_{j=1 \atop j \neq m}^{\aleph} \left\vert G_{k}(z_{m},z_{j}) \cdot \tau_{j} \epsilon_{0}(z_{j})\, \mathcal{C}_{j} \right\vert \, \lesssim \, \sum_{j=1 \atop j \neq m}^{\aleph} \left\vert G_{k}(z_{m},z_{j}) \right\vert \, \left\vert \tau_{j} \right\vert \, \left\vert \epsilon_{0}(z_{j}) \right\vert \, \left\vert \mathcal{C}_{j} \right\vert \, \lesssim \, \sum_{j=1 \atop j \neq m}^{\aleph} \left\vert G_{k}(z_{m},z_{j}) \right\vert  \, \left\vert \mathcal{C}_{j} \right\vert.
\end{equation*}
In addition, thanks to \cite[Proposition 2.2]{AG-MS-Maxwell}, we know that
\begin{equation}\label{BTD}
\mathcal{C}_{j} \, = \, \mathcal{O}\left( a^{3-h} \right),    
\end{equation}
and this implies 
\begin{equation*}
    \omega^{2} \, \mu \, \sum_{j=1 \atop j \neq m}^{\aleph} \left\vert G_{k}(z_{m},z_{j}) \cdot \tau_{j} \epsilon_{0}(z_{j}) \, \mathcal{C}_{j} \right\vert  \, \lesssim \, a^{3-h} \, \sum_{j=1 \atop j \neq m}^{\aleph} \frac{1}{d^{3}_{mj}} \, = \, \mathcal{O}\left( a^{3-h} \, d^{-3} \, \aleph \right) \, = \, \mathcal{O}\left( a^{3 \, - \, h \, - \, 3 t \, - \, s} \right). 
\end{equation*}
Hence, under the condition 
\begin{equation}\label{addittop15}
    3 \, - \, h \, - \, 3 t \, - \, s \, > \, 0, 
\end{equation} 
the condition $(\ref{Cdt0508})$ will be satisfied. By using Born series and keeping the dominant terms, we get 
\begin{equation}\label{LZI}
    \begin{pmatrix}
       \mathcal{Q}_{1} \\
        \mathcal{Q}_{2} \\
        \vdots \\
        \mathcal{Q}_{\aleph}
    \end{pmatrix} \, 
    = \, \begin{pmatrix}
       u_{0}(z_{1}) \\
       u_{0}(z_{2}) \\
        \vdots \\
       u_{0}(z_{\aleph})
    \end{pmatrix} \, + \, \begin{pmatrix}
         Error_{2,1} \\
         Error_{2,2} \\
        \vdots \\
         Error_{2,\aleph}
    \end{pmatrix},  
    \end{equation}
    where 
    \begin{equation*}
    \begin{pmatrix}
         Error_{2,1} \\
         Error_{2,2} \\
        \vdots \\
         Error_{2,\aleph}
    \end{pmatrix} \, := \, \begin{pmatrix}
        \mathcal{C}_{1}^{-1} \cdot Error_{1,1} \\
        \mathcal{C}_{2}^{-1} \cdot Error_{1,2} \\
        \vdots \\
        \mathcal{C}_{\aleph}^{-1} \cdot Error_{1,\aleph}
    \end{pmatrix} \, - \, \omega^{2} \, \mu \, \mathcal{M}  \cdot   \begin{pmatrix}
       \mathcal{Q}_{1} \\
        \mathcal{Q}_{2} \\
        \vdots \\
        \mathcal{Q}_{\aleph}
    \end{pmatrix},
\end{equation*}
which can be rewritten as 
\begin{equation*}
  \begin{pmatrix}
      Error_{2,1} \\
      Error_{2,2} \\
      \vdots \\
      Error_{2,\aleph}
  \end{pmatrix}   
    =
    \begin{pmatrix}
        \mathcal{C}_{1}^{-1} \cdot Error_{1,1} \, - \, \omega^{2} \, \mu \, \sum_{j=1 \atop j \neq 1}^{\aleph}G_{k}(z_{1},z_{j}) \cdot \tau_{j} \epsilon_{0}(z_{j}) \, \mathcal{C}_{j} \, \mathcal{Q}_{j} \\
        \mathcal{C}_{2}^{-1} \cdot Error_{1,2} \, - \, \omega^{2} \, \mu \, \sum_{j=1 \atop j \neq 2}^{\aleph}G_{k}(z_{2},z_{j}) \cdot \tau_{j} \epsilon_{0}(z_{j}) \, \mathcal{C}_{j} \, \mathcal{Q}_{j} \\
        \vdots \\
        \mathcal{C}_{\aleph}^{-1} \cdot Error_{1,\aleph} \, - \, \omega^{2} \, \mu \, \sum_{j=1 \atop j \neq \aleph}^{\aleph}G_{k}(z_{\aleph},z_{j}) \cdot \tau_{j} \epsilon_{0}(z_{j}) \, \mathcal{C}_{j} \, \mathcal{Q}_{j} 
    \end{pmatrix}.
    \end{equation*}
    Next, we need to estimate the term $Error_{2,k}$, for $k \, = \, 1, \cdots, \aleph$. To do this, we recall that 
    \begin{eqnarray*}
    Error_{2,k} \; &:=& \; \mathcal{C}_{1}^{-1} \cdot Error_{1,k} \, - \, \omega^{2} \, \mu \, \sum_{j=1 \atop j \neq k}^{\aleph}G_{k}(z_{k},z_{j}) \cdot \tau_{j} \epsilon_{0}(z_{j}) \, \mathcal{C}_{j} \, \mathcal{Q}_{j} \\
    Error_{2,k} \; &\overset{(\ref{QCU})}{=}& \; \mathcal{C}_{1}^{-1} \cdot Error_{1,k} \, - \, \omega^{2} \, \mu \, \sum_{j=1 \atop j \neq k}^{\aleph} \tau_{j} \, \epsilon_{0}(z_{j}) \, G_{k}(z_{k},z_{j}) \cdot \int_{D_{j}} u_{1}(x) \, dx \\
    \left\vert Error_{2,k} \right\vert \; & \lesssim & \; \left\vert \mathcal{C}_{1}^{-1} \right\vert \, \left\vert Error_{1,k} \right\vert \, + \, \sum_{j=1 \atop j \neq k}^{\aleph} \, \left\vert G_{k}(z_{k},z_{j}) \right\vert \, \left\Vert 1 \right\Vert_{\mathbb{L}^{2}(D_{j})} \, \left\Vert u_{1} \right\Vert_{\mathbb{L}^{2}(D_{j})} \\
    & \overset{(\ref{BTD})}{\lesssim} & \; a^{h-3} \, \left\vert Error_{1,k} \right\vert \, + \, a^{\frac{3}{2}} \, \sum_{j=1 \atop j \neq k}^{\aleph} \, \frac{1}{d^{3}_{kj}}  \, \left\Vert u_{1} \right\Vert_{\mathbb{L}^{2}(D_{j})} \\
    & \lesssim & \; a^{h-3} \, \left\vert Error_{1,k} \right\vert \, + \, a^{\frac{3}{2}} \, \left\Vert u_{1} \right\Vert_{\mathbb{L}^{2}(D)} \, \left( \sum_{j=1 \atop j \neq k}^{\aleph} \, \frac{1}{d^{6}_{kj}} \right)^{\frac{1}{2}}   \\
    & \lesssim & \; a^{h-3} \, \left\vert Error_{1,k} \right\vert \, + \, a^{\frac{3}{2}} \, \left\Vert u_{1} \right\Vert_{\mathbb{L}^{2}(D)} \, d^{-3} \, \aleph^{\frac{1}{2}}.
    \end{eqnarray*}
    Knowing that 
\begin{equation}\label{ape}
\left\Vert u_{1} \right\Vert_{\mathbb{L}^{2}(D)} \, = \, \mathcal{O}\left( a^{\frac{3}{2}-h-\frac{s}{2}} \right),
\end{equation}
see Proposition \ref{Propapest} for its justification, we deduce that 
\begin{equation}\label{Eq0638}
    \left\vert Error_{2,k} \right\vert \; \lesssim \; a^{h-3} \, \left\vert Error_{1,k} \right\vert \, + \, a^{3-h-3t-{s}}.
\end{equation}
To accomplish the estimation of $(\ref{Eq0638})$, we need to estimate $\left\vert Error_{1,k} \right\vert$. For $(\ref{Errormexpression})$, we take the modulus on its both sides to get 
\begin{eqnarray*}
   \left\vert Error_{1,m} \right\vert & \lesssim &  \left\Vert W_{m} \right\Vert_{\mathbb{L}^{2}(D_{m})}  \Bigg[  \left\Vert 1 \right\Vert_{\mathbb{L}^{2}(D_{m})}  \sum_{j=1 \atop j \neq m}^{\aleph} \left\Vert u_{1} \right\Vert_{\mathbb{L}^{2}(D_{j})} \frac{1}{d^{4}_{mj}}  \left[  \int_{D_{j}} \left\vert y - z_{j} \right\vert^{2}  dy \right]^{\frac{1}{2}} \\ \nonumber
    &+&   \left\Vert 1 \right\Vert_{\mathbb{L}^{2}(D_{m})}\, \left[  \int_{D_{m}} \left\vert x - z_{m}  \right\vert^{2}  dx \right]^{\frac{1}{2}}  \sum_{j=1 \atop j \neq m}^{\aleph} \frac{1}{d^{4}_{mj}}   \left\Vert u_{1} \right\Vert_{\mathbb{L}^{2}(D_{j})}  +  \left[  \int_{D_{m}} \left\vert x - z_{m} \right\vert^{2} \, dx \right]^{\frac{1}{2}}  +  \left\Vert Err_{0} \right\Vert_{\mathbb{L}^{2}(D_{m})}\Bigg],
    \end{eqnarray*}
    where we have used the  fact that $u_{0}(\cdot)\in \mathcal{C}^1$ and the singularity of the Green's kernel $G_{k}(\cdot, \cdot)$ is of order 3. Besides, by the use of 
    \begin{equation*}
        \int_{D_{j}} \left\vert x - z_{j} \right\vert^{2} \, dx \, = \, \mathcal{O}\left( a^{5} \right), \quad 1 \leq j \leq \aleph, 
    \end{equation*}
    we obtain  
    \begin{eqnarray}\label{Errormexpressionms}
    \nonumber
   \left\vert Error_{1,m} \right\vert \, & \lesssim & \, \left\Vert W_{m} \right\Vert_{\mathbb{L}^{2}(D_{m})}  \, \left[ a^{4} \,  \sum_{j=1 \atop j \neq m}^{\aleph} \left\Vert u_{1} \right\Vert_{\mathbb{L}^{2}(D_{j})} \frac{1}{d^{4}_{mj}} \, + a^{\frac{5}{2}} \, + \, \left\Vert Err_{0} \right\Vert_{\mathbb{L}^{2}(D_{m})} \right] \\ \nonumber
   & \lesssim & \, \left\Vert W_{m} \right\Vert_{\mathbb{L}^{2}(D_{m})}  \, \left[ a^{4} \, d^{-4} \,  \left\Vert u_{1} \right\Vert_{\mathbb{L}^{2}(D)} \, \aleph^{\frac{1}{2}} \, + \, a^{\frac{5}{2}} \, + \, \left\Vert Err_{0} \right\Vert_{\mathbb{L}^{2}(D_{m})} \right] \\
   & \overset{(\ref{ape})}{\lesssim} & \, \left\Vert W_{m} \right\Vert_{\mathbb{L}^{2}(D_{m})}  \, \left[ a^{\frac{11}{2}-h-4t-s} \,  + \, a^{\frac{5}{2}} \, + \, \left\Vert Err_{0} \right\Vert_{\mathbb{L}^{2}(D_{m})} \right].
    \end{eqnarray}
    To estimate of  $\left\Vert Err_{0} \right\Vert_{\mathbb{L}^{2}(D_{m})}$, we go back to $(\ref{FM1})$ and take the $\left\Vert \cdot \right\Vert_{\mathbb{L}^{2}(D_{m})}$-norm on the both sides of the equation to obtain 
    \begin{eqnarray*}
     \left\Vert Err_{0} \right\Vert_{\mathbb{L}^{2}(D_{m})} \, & \lesssim &  \sum_{j=1 \atop j \neq m}^{\aleph} \left\Vert \int_{D_{j}} G_{k}(\cdot,y) \cdot u_{1}(y) \int_{0}^{1} \nabla(\epsilon_{0})(z_{j}+t(y-z_{j})) \cdot (y - z_{j}) \, dt \, dy \right\Vert_{\mathbb{L}^{2}(D_{m})} 
    \\  &+&  \left\Vert \int_{D_{m}} G_{k}(\cdot,y) \cdot u_{1}(y) \int_{0}^{1} \nabla(\epsilon_{0})(z_{m}+t(y-z_{m})) \cdot (y - z_{j}) \, dt \, dy \right\Vert_{\mathbb{L}^{2}(D_{m})} \\ \nonumber
    &+& \left\Vert \int_{D_{m}} \Gamma(\cdot, y) \cdot u_{1}(y) dy \right\Vert_{\mathbb{L}^{2}(D_{m})} 
    + \left\Vert  \nabla M_{D_{m}} \left(u_{1} \int_{0}^{1} \nabla(\epsilon_{0}^{-1})(z_{m}+t(\cdot - z_{m})) \cdot (\cdot - z_{m})  dt \right)\right\Vert_{\mathbb{L}^{2}(D_{m})},
\end{eqnarray*}
and, by using the smoothness of the function $\epsilon_{0}(\cdot)$, the dominant term related to the Green's kernel, see $(\ref{SLI})$, and the dominant term related to the kernel $\Gamma(\cdot, \cdot)$, see $(\ref{SLII})$, we reduce the previous estimation to 
    \begin{eqnarray*}
     \left\Vert Err_{0} \right\Vert_{\mathbb{L}^{2}(D_{m})} \, & \lesssim &  a \, \sum_{j=1 \atop j \neq m}^{\aleph} \left\Vert u_{1} \right\Vert_{\mathbb{L}^{2}(D_{j})} \,  \left[ \int_{D_{m}} \, \int_{D_{j}} \left\vert G_{k}(x,y) \right\vert^{2} \, dy \, dx  \right]^{\frac{1}{2}}
    \\  &+&  \left\Vert \nabla M_{D_{m}}\left( \frac{u_{1}(\cdot)}{\epsilon_{0}(\cdot)} \int_{0}^{1} \nabla(\epsilon_{0})(z_{m}+t(\cdot -z_{m})) \cdot (\cdot - z_{j}) \, dt  \right) \right\Vert_{\mathbb{L}^{2}(D_{m})} \\ \nonumber
    &+& \left\Vert \int_{D_{m}} \nabla \nabla M \left( \Phi_{0}(\cdot,y) \nabla \epsilon_{0}^{-1}(y) \right)(\cdot) \cdot u_{1}(y) \,  dy \right\Vert_{\mathbb{L}^{2}(D_{m})} \\
    &+& \left\Vert  \nabla M_{D_{m}} \left(u_{1}(\cdot) \, \int_{0}^{1} \nabla(\epsilon_{0}^{-1})(z_{m}+t(\cdot - z_{m})) \cdot (\cdot - z_{m}) \, dt \right)\right\Vert_{\mathbb{L}^{2}(D_{m})}.
\end{eqnarray*}
Moreover, since $\left\Vert \nabla M_{D_{m}} \right\Vert_{\mathcal{L}\left(\mathbb{L}^{2}(D_{m});\mathbb{L}^{2}(D_{m})\right)} \, = \, 1$, see \cite[Lemma 5.5]{AG-MS-Maxwell}, and using the fact that from singularity point of view we have   
\begin{equation*}
    \nabla \nabla M \left( \Phi_{0}(\cdot,y) \nabla \epsilon_{0}^{-1}(y) \right)(\cdot) \; \sim \; \nabla \left( \Phi_{0}(\cdot, y) \, \nabla \epsilon_{0}^{-1}(y) \right), 
\end{equation*}
we deduce the following estimation
    \begin{eqnarray*}
     \left\Vert Err_{0} \right\Vert_{\mathbb{L}^{2}(D_{m})} \, & \lesssim &  a \, \sum_{j=1 \atop j \neq m}^{\aleph} \left\Vert u_{1} \right\Vert_{\mathbb{L}^{2}(D_{j})} \,  \left[ \int_{D_{m}} \, \int_{D_{j}} \left\vert G_{k}(x,y) \right\vert^{2} \, dy \, dx  \right]^{\frac{1}{2}}
    \, + \, a \, \left\Vert  u_{1} \right\Vert_{\mathbb{L}^{2}(D_{m})} \\ \nonumber
    &+& \left\Vert \int_{D_{m}} \nabla  \left( \Phi_{0}(\cdot,y) \nabla \epsilon_{0}^{-1}(y) \right) \cdot u_{1}(y) \,  dy \right\Vert_{\mathbb{L}^{2}(D_{m})} \, + a \, \left\Vert  u_{1} \right\Vert_{\mathbb{L}^{2}(D_{m})}.
\end{eqnarray*}
On the R.H.S, to evaluate the first term we expand the Green's kernel $G_{k}(\cdot,\cdot)$ near the centers, together with the explicit computation for the third term, we end up with the following estimation
    \begin{equation*}
     \left\Vert Err_{0} \right\Vert_{\mathbb{L}^{2}(D_{m})} \,  \lesssim \,  a^{4} \, \sum_{j=1 \atop j \neq m}^{\aleph} \left\Vert u_{1} \right\Vert_{\mathbb{L}^{2}(D_{j})} \,   \left\vert G_{k}(z_{m},z_{j}) \right\vert
    \, + \, a \, \left\Vert  u_{1} \right\Vert_{\mathbb{L}^{2}(D_{m})} + \left\Vert \nabla N_{D_{m}}\left( \nabla \epsilon_{0}^{-1} \cdot u_{1} \right)  \right\Vert_{\mathbb{L}^{2}(D_{m})}.
\end{equation*}
Now, by using the fact that 
\begin{equation*}
    \left\Vert \nabla N_{D_{m}} \right\Vert_{\mathcal{L}\left(\mathbb{L}^{2}(D_{m}), \mathbb{L}^{2}(D_{m}) \right)} \; = \; \mathcal{O}\left( a \right) \quad \text{and} \quad     \left\vert G_{k}(z_{m},z_{j}) \right\vert \; \sim \; \frac{1}{d^{3}_{mj}},
\end{equation*}
we obtain  
    \begin{eqnarray}\label{FM219}
    \nonumber
     \left\Vert Err_{0} \right\Vert_{\mathbb{L}^{2}(D_{m})} \, & \lesssim & \,  a^{4} \, \sum_{j=1 \atop j \neq m}^{\aleph} \left\Vert u_{1} \right\Vert_{\mathbb{L}^{2}(D_{j})} \,  \frac{1}{d^{3}_{mj}}
    \, + \, a \, \left\Vert  u_{1} \right\Vert_{\mathbb{L}^{2}(D_{m})} \\ \nonumber
    & \lesssim & \,  a^{4} \, \left( \sum_{j=1 \atop j \neq m}^{\aleph} \left\Vert u_{1} \right\Vert^{2}_{\mathbb{L}^{2}(D_{j})} \right)^{\frac{1}{2}} \, \left( \sum_{j=1 \atop j \neq m}^{\aleph} \frac{1}{d^{6}_{mj}} \right)^{\frac{1}{2}}
    \, + \, a \, \left\Vert  u_{1} \right\Vert_{\mathbb{L}^{2}(D_{m})} \\ \nonumber
    & \lesssim & \, \left( a^{4} \, d^{-3} \, \aleph^{\frac{1}{2}} + \, a  \right) \,  \left\Vert  u_{1} \right\Vert_{\mathbb{L}^{2}(D)} \\ & \overset{(\ref{ape})}{=} & \, \mathcal{O}\left( a^{\frac{5}{2}-h-\frac{s}{2}} \right) \, + \, \mathcal{O}\left( a^{\frac{11}{2}-3t-s-h}\right) \overset{(\ref{addittop15})}{=}  \mathcal{O}\left( a^{\frac{5}{2}-h-\frac{s}{2}} \right).
\end{eqnarray}
By returning to $(\ref{Errormexpressionms})$ and using $(\ref{FM219})$, we deduce that 
 \begin{equation*}
   \left\vert Error_{1,m} \right\vert 
   \, \lesssim  \, \left\Vert W_{m} \right\Vert_{\mathbb{L}^{2}(D_{m})}  \, a^{\frac{5}{2}-h + \min\left(3 - 4 t - s; -\frac{s}2 \right)}.
    \end{equation*}
In addition, thanks to \cite[Proposition 2.2]{AG-MS-Maxwell}, we know that $\left\Vert W_{m} \right\Vert_{\mathbb{L}^{2}(D_{m})} \, = \, \mathcal{O}\left( a^{\frac{3}{2}-h} \right)$, and this implies 
\begin{equation*}
   \left\vert Error_{1,m} \right\vert  \, = \, \mathcal{O}\left( a^{(4 - 2 h) + \min\left(3 - 4 t - s; -\frac{s}2 \right)} \right).
\end{equation*}
Finally, by plugging the previous estimation into $(\ref{Eq0638})$, we obtain  
\begin{equation}\label{216217}
        \left\vert Error_{2,m} \right\vert  \, = \, \mathcal{O}\left( a^{\min((3 - 3t -  h - s);(1 -  h-\frac{s}{2}))} \right).
\end{equation}
Hence, by gathering $(\ref{LZI})$ and $(\ref{216217})$, we conclude that 
\begin{equation*}
    \mathcal{Q}_{m} \, = \, u_{0}(z_{m}) \, + \, \mathcal{O}\left( a^{\min((3 - 3t -  h - s);(1 -  h - \frac{s}{2}))} \right).
\end{equation*}
Consequently, by using $(\ref{QCU})$ and $(\ref{BTD})$, we obtain  
\begin{equation*}
    \int_{D_{m}} u_{1}(x) \, dx \, = \, \mathcal{C}_{m} \cdot u_{0}(z_{m}) \, + \, \mathcal{O}\left( a^{\min((6 - 3t - 2 h - s);(4 - 2 h -\frac{s}{2}))} \right).
\end{equation*}
Furthermore, from \cite[Proposition 2.2]{AG-MS-Maxwell}, we know that 
\begin{equation*}
    \mathcal{C}_{m} \, = \, \frac{a^{3} \, \epsilon_{0}(z_{m})}{\left(\epsilon_{0}(z_{m}) \, - \, \lambda_{n_{0}}^{(3)}(B) \, \left(\epsilon_{0}(z_{m}) \, - \, \epsilon_{p}(\omega) \right) \right)} \, \int_{B} e_{n_{0}}^{(3)}(x) \, dx \otimes \int_{B} e_{n_{0}}^{(3)}(x) \, dx \, + \, \mathcal{O}\left( a^{3} \right).
\end{equation*}
This implies, 
\begin{equation}\label{BRTVMM}
 \int_{D_{m}} u_{1}(x) dx \, = \, \frac{a^{3} \; \epsilon_{0}(z_{m}) \, \langle u_{0}(z_{m}) , \int_{B} e^{(3)}_{n_{0}}(x) \, dx \rangle}{\left(\epsilon_{0}(z_{m}) -  \left(\epsilon_{0}(z_{m}) - \epsilon_{p}(\omega) \right) \,   \lambda^{(3)}_{n_{0}}(B) \right)} \, \int_{B} e^{(3)}_{n_{0}}(x) \, dx + \, \mathcal{O}\left( a^{\min(3;(6 - 3t - 2 h - s);(4 - 2 h - \frac{s}{2}))} \right).
\end{equation}
\bigskip
\newline 
In the subsequent discussions, we will use the notation $V \, := \,u_{0} $, respectively $E\, := \, u_{1} $ for the solutions to $(\ref{eq:electromagnetic_scattering})$ signifying the electric field respectively before and after injecting the nano-particles $D$ inside $\Omega$. Taking into account the introduced notation, we rewrite $(\ref{BRTVMM})$ as  
\begin{equation}\label{BRTVMM+}
 \int_{D_{m}} E(x) dx \, = \, \frac{a^{3} \; \epsilon_{0}(z_{m}) \, \langle V\left( z_{m},  \theta, q \right) , \int_{B} e^{(3)}_{n_{0}}(x) \, dx \rangle}{\left(\epsilon_{0}(z_{m}) -  \left(\epsilon_{0}(z_{m}) - \epsilon_{p}(\omega) \right) \,   \lambda^{(3)}_{n_{0}}(B) \right)} \, \int_{B} e^{(3)}_{n_{0}}(x) \, dx + \, \mathcal{O}\left( a^{\min(3;(6 - 3t - 2 h - s);(4 - 2 h - \frac{s}{2}))} \right).
\end{equation}
\subsection{Estimation of the scattered  fields.}\label{ProofThemSubSecII}
To derive the estimate of the scattered field, we take $x$ away from $D$ in $(\ref{SK})$, 
\begin{equation*}
E(x) + \omega^{2} \, \mu \, \int_{D} G_{k}(x,y) \cdot E(y) \, \left( \epsilon_{0}(y) - \epsilon_{p}(\omega) \right) \, dy \, = \, V(x).
\end{equation*}
In addition, as $E \, = \, E^{I} \, + \, E^{s}$ and $V \, = \, V^{I} \, + \, V^{s}$, with $V^{I} \, = \, E^{I}$, we obtain 
\begin{eqnarray}\label{LFI}
\nonumber
E^{s}(x) - V^{s}(x) \, &=& \, - \, \omega^{2} \, \mu \, \int_{D} G_{k}(x,y) \cdot E(y) \, \left( \epsilon_{0}(y) - \epsilon_{p}(\omega) \right) \, dy \\
&=& \, - \, \omega^{2} \, \mu \, \sum_{j=1}^{\aleph} \int_{D_{j}} G_{k}(x,y) \cdot E(y) \, \left( \epsilon_{0}(y) - \epsilon_{p}(\omega) \right) \, dy,
\end{eqnarray}
which, by Taylor expansion for the permittivity function $\epsilon_{0}(\cdot)$, gives us 
\begin{equation*}
    E^{s}(x) \, - \, V^{s}(x) \, =  \, - \, \omega^{2} \, \mu \, \sum_{j=1}^{\aleph} \left( \epsilon_{0}(z_{j}) - \epsilon_{p}(\omega) \right) \, \int_{D_{j}} G_{k}(x,y) \cdot E(y)  \, dy \; - \; T_{1}(x),
\end{equation*}
where 
\begin{equation*}
T_{1}(x) := \, \omega^{2} \, \mu  \, \sum_{j=1}^{\aleph} \int_{D_{j}} G_{k}(x,y) \cdot E(y) \, \int_{0}^{1} \, \nabla \epsilon_{0}(z_{j}+t(y-z_{j})) \cdot (y-z_{j}) \, dt \, dy.
\end{equation*}
We need to estimate the term $T_{1}(\cdot)$. We have,  
\begin{eqnarray*}
    \left\vert T_{1}(x) \right\vert \, & \lesssim & \,   \sum_{j=1}^{\aleph} \, \int_{D_{j}} \left\vert  G_{k}(x,y) \cdot E(y) \, \int_{0}^{1} \, \nabla \epsilon_{0}(z_{j}+t(y-z_{j})) \cdot (y-z_{j}) \, dt \, \right\vert \,  dy \\
    & \lesssim & \, \sum_{j=1}^{\aleph} \,  \left\Vert  G_{k}(x,\cdot) \right\Vert_{\mathbb{L}^{2}(D_{j})} \, \left\Vert E \right\Vert_{\mathbb{L}^{2}(D_{j})} \, \left\Vert \int_{0}^{1} \, \nabla \epsilon_{0}(z_{j}+t(\cdot-z_{j})) \cdot (\cdot-z_{j}) \, dt \, \right\Vert_{\mathbb{L}^{\infty}(D_{j})}  \\
    & \lesssim & \, a \, \sum_{j=1}^{\aleph} \,  \left\Vert  G_{k}(x,\cdot) \right\Vert_{\mathbb{L}^{2}(D_{j})} \, \left\Vert E \right\Vert_{\mathbb{L}^{2}(D_{j})}  \, \lesssim  \, a \, \left( \sum_{j=1}^{\aleph} \,  \left\Vert  G_{k}(x,\cdot) \right\Vert^{2}_{\mathbb{L}^{2}(D_{j})} \right)^{\frac{1}{2}} \, \left\Vert E \right\Vert_{\mathbb{L}^{2}(D)},
\end{eqnarray*}
which, by knowing that $x$ is away from $D$, such that the Green's kernel $G_{k}(x,\cdot)$ is smooth, allows us to get 
\begin{equation*}
      \left\vert T_{1}(x) \right\vert \, = \, \mathcal{O}\left( a^{\frac{5-s}{2}} \; \left\Vert E \right\Vert_{\mathbb{L}^{2}(D)}\right). 
\end{equation*}
Then, 
\begin{equation*}
E^{s}(x)  \, - \, V^{s}(x) \, = \, - \, \omega^{2} \, \mu \, \sum_{j=1}^{\aleph} \left( \epsilon_{0}(z_{j}) - \epsilon_{p}(\omega) \right) \, \int_{D_{j}} G_{k}(x,y) \cdot E(y)  \, dy \, + \, \mathcal{O}\left( a^{\frac{5-s}{2}} \; \left\Vert E \right\Vert_{\mathbb{L}^{2}(D)}\right).
\end{equation*}
In addition, by Taylor expansion for $G_{k}(x,\cdot)$, we obtain
\begin{eqnarray*}
E^{s}(x) \, - \, V^{s}(x) \, &=& \, - \, \omega^{2} \, \mu \, \sum_{j=1}^{\aleph} \left( \epsilon_{0}(z_{j}) - \epsilon_{p}(\omega) \right) \, G_{k}(x,z_{j}) \cdot \int_{D_{j}}  E(y)  \, dy \\ &-& \, T_{2}(x)  \, + \, \mathcal{O}\left( a^{\frac{5-s}{2}} \; \left\Vert E \right\Vert_{\mathbb{L}^{2}(D)}\right), 
\end{eqnarray*}
where 
\begin{equation*}
    T_{2}(x) \, := \, \omega^{2} \, \mu \, \sum_{j=1}^{\aleph} \, \left( \epsilon_{0}(z_{j}) - \epsilon_{p}(\omega) \right) \, \int_{D_{j}} \int_{0}^{1} \nabla G_{k}(x,z_{j}+t(y-z_{j})) \cdot (y-z_{j}) \, dt \cdot E(y)  \, dy.
\end{equation*}
We estimate the above term as 
\begin{equation*}
    \left\vert T_{2}(x) \right\vert \,  \lesssim  \,   \sum_{j=1}^{\aleph} \left\vert  \epsilon_{0}(z_{j}) - \epsilon_{p}(\omega)  \right\vert \,  \left\vert \int_{D_{j}} \int_{0}^{1} \nabla G_{k}(x,z_{j}+t(y-z_{j})) \cdot (y-z_{j}) \, dt \cdot E(y)  \, dy \right\vert,
\end{equation*}
which, by using the fact that $\left\vert  \epsilon_{0}(z_{j}) - \epsilon_{p}(\omega)  \right\vert \, \sim \, 1$ and the H\"{o}lder inequality, gives us
\begin{eqnarray*}
    \left\vert T_{2}(x) \right\vert \,  & \lesssim & \,   \sum_{j=1}^{\aleph} \left\Vert  \int_{0}^{1} \nabla G_{k}(x,z_{j}+t(\cdot-z_{j})) \cdot (\cdot-z_{j}) \, dt \right\Vert_{\mathbb{L}^{2}(D_{j})} \, \left\Vert E \right\Vert_{\mathbb{L}^{2}(D_{j})} \\
    & \lesssim & \, \left(  \sum_{j=1}^{\aleph} \left\Vert  \int_{0}^{1} \nabla G_{k}(x,z_{j}+t(\cdot-z_{j})) \cdot (\cdot-z_{j}) \, dt \right\Vert^{2}_{\mathbb{L}^{2}(D_{j})} \right)^{\frac{1}{2}} \, \left\Vert E \right\Vert_{\mathbb{L}^{2}(D)}.
\end{eqnarray*}
Again, using the smoothness of $\nabla G_{k}(x, \cdot)$, for $x$ away from $D$, allows us to deduce that 
\begin{equation*}
     \left\vert T_{2}(x) \right\vert \, = \, \mathcal{O}\left( a^{\frac{5-s}{2}} \; \left\Vert E \right\Vert_{\mathbb{L}^{2}(D)}\right).
\end{equation*}
Then, 
\begin{eqnarray}\label{Equa345}
\nonumber
E^{s}(x)  \, - \, V^{s}(x) \, &=& \, - \, \omega^{2} \, \mu \, \sum_{j=1}^{\aleph} \left( \epsilon_{0}(z_{j}) - \epsilon_{p}(\omega) \right) \, G_{k}(x,z_{j}) \cdot \int_{D_{j}}  E(y)  \, dy  \, + \, \mathcal{O}\left( a^{\frac{5-s}{2}} \; \left\Vert E \right\Vert_{\mathbb{L}^{2}(D)}\right) \\
&\overset{(\ref{ape})}{=}& \, - \, \omega^{2} \, \mu \, \sum_{j=1}^{\aleph} \left( \epsilon_{0}(z_{j}) - \epsilon_{p}(\omega) \right) \, G_{k}(x,z_{j}) \cdot \int_{D_{j}}  E(y)  \, dy  \, + \, \mathcal{O}\left( a^{4 -h -s} \right).
\end{eqnarray}
In addition, by plugging $(\ref{BRTVMM+})$ into $(\ref{Equa345})$, we derive that
\begin{eqnarray}\label{Es=Vs+...}
\nonumber
E^{s}(x) \, - \, V^{s}(x) \, & = &  \, - \, \mu \, a^{3} \, \omega^{2} \, \sum_{j=1}^{\aleph} \frac{\epsilon_{0}(z_{j}) \, \left( \epsilon_{0}(z_{j}) - \epsilon_{p}(\omega) \right)}{\Lambda_{n_{0},j}(\omega)}  \, \langle V\left(z_{j},  \theta, q \right), \int_{B}  e_{n_{0}}^{(3)}(y)  \, dy \rangle \, G_{k}(x,z_{j}) \cdot \int_{B}  e_{n_{0}}^{(3)}(y)  \, dy  \\ &+& \mathcal{O}\left( a^{\min\left((3-s);(6-3t-2h-\frac{3s}{2});(4-h-s)\right)}\right),
\end{eqnarray}
where $\Lambda_{n_{0},j}(\omega)$ is a family of dispersion equations given by 
\begin{equation}\label{Eq26}
    \Lambda_{n_{0},j}(\omega) \, := \, \left( \epsilon_{0}(z_{j}) \, - \, \lambda_{n_{0}}^{(3)}(B) \, \left( \epsilon_{0}(z_{j}) \, - \,  \epsilon_{p}(\omega)\right) \right).
\end{equation}
Next, we solve the real part of the above dispersion equation. More precisely, we set $\omega_{P_{\ell},n_{0},j}$ to be solution to 
\begin{equation}\label{Solomegam}
    Re\left(\Lambda_{n_{0},j}(\omega_{P_{\ell},n_{0},j})\right) \, = \, \left( Re\left(\epsilon_{0}(z_{j})\right) \, - \, \lambda_{n_{0}}^{(3)}(B) \, \left( Re\left(\epsilon_{0}(z_{j})\right) - Re\left(\epsilon_{p}(\omega_{P_{\ell},n_{0},j})\right) \right)  \right) \, = \, 0,
\end{equation}
which gives us 
\begin{equation}\label{epsomegam}
    Re\left(\epsilon_{p}(\omega_{P_{\ell},n_{0},j})\right) \, = \, Re\left(\epsilon_{0}(z_{j})\right) \, \frac{\left(\lambda_{n_{0}}^{(3)}(B) \, - \, 1 \right)}{\lambda_{n_{0}}^{(3)}(B)}. 
\end{equation}
The existence and uniqueness of the solution $ \omega_{P_{\ell},n_{0},j}$ to $(\ref{Solomegam})$, or equivalently to $(\ref{epsomegam})$, has already been discussed in \cite[Lemma 5.7]{AG-MS-Maxwell}. We have, 
\begin{equation}\label{PlasmonicResonace}
    \omega^{2}_{P_{\ell},n_{0},j} \, = \, \frac{1}{2} \, \left[ 2 \, \omega^{2}_{0} \, - \, \gamma^{2} \, + \, \frac{\omega^{2}_{p} \, \lambda^{(3)}_{n_{0}}(B) \, \epsilon_{\infty}}{  \left[ \Re\left(\epsilon_{0}(z_{j})\right) \, \left(1 - \lambda^{(3)}_{n_{0}}(B) \right) + \lambda^{(3)}_{n_{0}}(B) \, \epsilon_{\infty} \right]} \, + \, \sqrt{\Delta} \right],
\end{equation}
where $\Delta$ is such that
\begin{eqnarray*}
\Delta &=& \left( \frac{\omega^{2}_{p} \, \lambda^{(3)}_{n_{0}}(B) \, \epsilon_{\infty}}{  \left[ \Re\left(\epsilon_{0}(z_{j})\right) \, \left(1 - \lambda^{(3)}_{n_{0}}(B) \right) + \lambda^{(3)}_{n_{0}}(B) \, \epsilon_{\infty} \right]} \right)^{2} \\ &-& \gamma^{2} \, \left( 4 \, \omega^{2}_{0} \, - \, \gamma^{2} \, + \, \frac{2 \, \omega^{2}_{p} \, \epsilon_{\infty} \, \lambda^{(3)}_{n_{0}}(B)}{\left[ \Re\left(\epsilon_{0}(z_{j})\right) \, \left(1 - \lambda^{(3)}_{n_{0}}(B) \right) + \lambda^{(3)}_{n_{0}}(B) \, \epsilon_{\infty} \right]} \right). 
\end{eqnarray*}
Furthermore, the frequency $\omega_{P_{\ell},n_{0},j}$ satisfies  
\begin{eqnarray}\label{epsomega=epsomegam+err}
\nonumber
    \Lambda_{n_{0},j}\left( \omega_{P_{\ell},n_{0},j} \right) \, &=& \, Im\left( \epsilon_{0}(z_{j}) \right) \, \left( 1 \, - \, \lambda^{(3)}_{n_{0}}(B) \right) \, + \, \lambda^{(3)}_{n_{0}}(B) \, Im\left( \epsilon_{p}(\omega_{P_{\ell},n_{0},j}) \right) \\ \nonumber
    \left\vert \Lambda_{n_{0},j}\left( \omega_{P_{\ell},n_{0},j} \right) \right\vert \, & \lesssim &  \left\Vert Im\left( \epsilon_{0} \right) \right\Vert_{\mathbb{L}^{\infty}(\Omega)}  \, + \, \left\vert Im\left( \epsilon_{p}(\omega_{P_{\ell},n_{0},j}) \right) \right\vert  \overset{(\ref{Lorentz-model})}{\underset{(\ref{ImEpsgamma})}{=}} \, \mathcal{O}\left( \gamma \right),
\end{eqnarray}
Consequently, if we assume that
\begin{equation}\label{gammaah}
\gamma \, = \, \mathcal{O}\left( a^{h} \right),
\end{equation}
we derive from $(\ref{epsomega=epsomegam+err})$ the coming estimation 
\begin{equation*}
    \Lambda_{n_{0},j}\left( \omega_{P_{\ell},n_{0},j} \right) \, = \, \mathcal{O}\left( a^{h} \right). 
\end{equation*}
In addition, if we let $\omega$ such that 
\begin{equation}\label{Eq1238}
    \omega^{2}  \, = \, \omega^{2}_{P_{\ell},n_{0},j} \, \pm \, a^{h},   
\end{equation}
we get 
\begin{eqnarray*}
    \Lambda_{n_{0},j}\left( \omega \right) \, & \overset{(\ref{Eq26})}{=} & \,  \epsilon_{0}(z_{j}) \, - \, \lambda_{n_{0}}^{(3)}(B) \, \left( \epsilon_{0}(z_{j}) \, - \,  \epsilon_{p}(\omega)\right)  \\
    & = & \, Re\left( \epsilon_{0}(z_{j}) \right) \, - \, \lambda_{n_{0}}^{(3)}(B) \, Re\left( \epsilon_{0}(z_{j}) \, - \,  \epsilon_{p}(\omega_{P_{\ell},n_{0},j})  \right)  \, -  \, \lambda_{n_{0}}^{(3)}(B) \, Re\left(  \epsilon_{p}(\omega_{P_{\ell},n_{0},j}) - \epsilon_{p}(\omega) \right)  \\
    &+& \, i \, Im\left(\epsilon_{0}(z_{j})\right) \, - \, \lambda_{n_{0}}^{(3)}(B) \, i \, \left( Im\left( \epsilon_{0}(z_{j}) \right) \, - \, Im\left(  \epsilon_{p}(\omega)\right)\right)\\
    &\overset{(\ref{Solomegam})}{=}&  \, \lambda_{n_{0}}^{(3)}(B) \, Re\left(\epsilon_{p}(\omega) - \epsilon_{p}(\omega_{P_{\ell},n_{0},j}) \right)  \, + \, i \, Im\left(\epsilon_{0}(z_{j})\right) \, \left( 1 \, - \, \lambda_{n_{0}}^{(3)}(B) \right) \, + \, i \, \lambda_{n_{0}}^{(3)}(B) \, \, Im\left(\epsilon_{p}(\omega)\right).
\end{eqnarray*}
Hence, by taking the modulus, we obtain 
\begin{eqnarray*}
    \left\vert \Lambda_{n_{0},j}\left( \omega \right) \right\vert \, 
    &\lesssim &  \, \left\vert Re\left(\epsilon_{p}(\omega) - \epsilon_{p}(\omega_{P_{\ell},n_{0},j}) \right) \right\vert \, + \, \left\Vert Im\left(\epsilon_{0} \right) \right\Vert_{\mathbb{L}^{\infty}(\Omega)}  \, + \,  \left\vert Im\left(\epsilon_{p}(\omega)\right) \right\vert \\ 
    & \overset{(\ref{Lorentz-model})}{\underset{(\ref{ImEpsgamma}) } \lesssim} &\, \left\vert Re\left(\epsilon_{p}(\omega) - \epsilon_{p}(\omega_{P_{\ell},n_{0},j}) \right) \right\vert \, + \, \gamma \,
     \overset{(\ref{Lorentz-model})}{\lesssim} \,  \left\vert \omega^{2} \, - \, \omega^{2}_{P_{\ell},n_{0},j} \right\vert \, + \, \gamma,
\end{eqnarray*}
and, by using $(\ref{gammaah})$ and $(\ref{Eq1238})$, we deduce that  
\begin{equation*}
    \Lambda_{n_{0},j}\left( \omega \right) \, = \, \mathcal{O}\left( a^{h} \right), 
\end{equation*}
with $\omega$ satisfying $(\ref{Eq1238})$. Now, by returning to $(\ref{Es=Vs+...})$ and using  $(\ref{Eq26})$ we derive the following relation on the scattered fields
\begin{eqnarray}\label{Es-Vs}
\nonumber
E^{s}(x) - V^{s}(x)  \, & = & - \,  \mu \, a^{3} \, \omega^{2} \, \sum_{j=1}^{\aleph}  \, \frac{\epsilon_{0}(z_{j}) \, \left(\epsilon_{0}(z_{j}) \, - \, \Lambda_{n_{0},j}\left( \omega \right) \right)}{\lambda_{n_{0}}^{(3)}(B) \, \Lambda_{n_{0},j}\left( \omega \right)}  \, \langle V(z_{j}, \theta, q) , \int_{B}  e_{n_{0}}^{(3)}(y)  \, dy \rangle \, G_{k}(x,z_{j}) \cdot \int_{B}  e_{n_{0}}^{(3)}(y)  \, dy \\ &+& \mathcal{O}\left( a^{\min\left( (3-s), (6-3t-2h-\frac{3s}{2}), (4 - h - s) \right)}\right).
\end{eqnarray}
This proves $(\ref{Es-Vs-intro})$. For the L.H.S of $(\ref{Es-Vs})$, in order to get the corresponding far-field expressions, we return to $(\ref{LFI})$ that
\begin{equation}\label{LFI2}
E^{s}(x) - V^{s}(x) \, = \, - \, \omega^{2} \, \mu \, \sum_{j=1}^{\aleph} \int_{D_{j}} G_{k}(x,y) \cdot E(y) \, \left( \epsilon_{0}(y) - \epsilon_{p}(\omega) \right) \, dy.
\end{equation}
Furthermore, thanks to \cite[Theorem 6.9]{colton2019inverse}, we know that the following asymptotic relation holds 
\begin{equation}\label{LFI3}
    G_{k}(x,y) \, = \, \frac{e^{i \, k \, \left\vert x \right\vert}}{\left\vert x \right\vert}  \, \left( G_{k}^{\infty}(\hat{x},y) \, + \, \mathcal{O}\left( \frac{1}{\left\vert x \right\vert} \right)  \right), \quad \left\vert x \right\vert \rightarrow + \infty.
\end{equation}
Then, by plugging $(\ref{LFI3})$ into $(\ref{LFI2})$, we obtain  
\begin{eqnarray*}
E^{s}(x) - V^{s}(x) \, &=& \frac{e^{i \, k \, \left\vert x \right\vert}}{\left\vert x \right\vert} \, \Bigg[  - \, \omega^{2} \, \mu \, \sum_{j=1}^{\aleph} \int_{D_{j}} G_{k}^{\infty}(\hat{x},y) \cdot E(y) \, \left( \epsilon_{0}(y) - \epsilon_{p}(\omega) \right) \, dy \\
&+& \mathcal{O}\left( \frac{1}{\left\vert x \right\vert} \sum_{j=1}^{\aleph} \int_{D_{j}}  E(y) \, \left( \epsilon_{0}(y) - \epsilon_{p}(\omega) \right) \, dy \right) \Bigg]. 
\end{eqnarray*}
By estimating the second term on the R.H.S, we obtain 
\begin{eqnarray*}
    T_{0}(x) \, &:=& \, \frac{1}{\left\vert x \right\vert} \sum_{j=1}^{\aleph} \int_{D_{j}}  E(y) \, \left( \epsilon_{0}(y) - \epsilon_{p}(\omega) \right) \, dy \\ 
    \left\vert T_{0}(x) \right\vert \, & \leq & \, \frac{1}{\left\vert x \right\vert} \sum_{j=1}^{\aleph} \left\vert  \int_{D_{j}}  E(y) \, \left( \epsilon_{0}(y) - \epsilon_{p}(\omega) \right) \, dy \right\vert \\
    & \leq & \, \frac{1}{\left\vert x \right\vert} \sum_{j=1}^{\aleph} \left\Vert E \right\Vert_{\mathbb{L}^{2}(D_{j})} \; \left\Vert  \epsilon_{0}(\cdot) \, - \, \epsilon_{p}(\omega)  \right\Vert_{\mathbb{L}^{\infty}(D_{j})} \, \left\vert D_{j} \right\vert^{\frac{1}{2}},
    \end{eqnarray*}
    which, by knowing that $\left\Vert  \epsilon_{0}(\cdot) \, - \, \epsilon_{p}(\omega)  \right\Vert_{\mathbb{L}^{\infty}(D_{j})} \, \sim \, 1$, gives 
    \begin{equation*}
    \left\vert T_{0}(x) \right\vert \,  \lesssim  \, \frac{1}{\left\vert x \right\vert} \sum_{j=1}^{\aleph} \left\Vert E \right\Vert_{\mathbb{L}^{2}(D_{j})} \; \left\vert D_{j} \right\vert^{\frac{1}{2}} 
     \lesssim  \, \frac{1}{\left\vert x \right\vert} \, \left\Vert E \right\Vert_{\mathbb{L}^{2}(D)} \; \left\vert D \right\vert^{\frac{1}{2}} 
     \overset{(\ref{ape})}{=}  \mathcal{O}\left( \frac{a^{3-h-s}}{\left\vert x \right\vert} \right).
\end{equation*}
Hence, 
\begin{equation*}
E^{s}(x) - V^{s}(x) \, = \, \frac{e^{i \, k \, \left\vert x \right\vert}}{\left\vert x \right\vert} \, \left[  - \, \omega^{2} \, \mu \, \sum_{j=1}^{\aleph} \int_{D_{j}} G_{k}^{\infty}(\hat{x},y) \cdot E(y) \, \left( \epsilon_{0}(y) - \epsilon_{p}(\omega) \right) \, dy \, 
+ \, \mathcal{O}\left( \frac{a^{3-h-s}}{\left\vert x \right\vert} \right) \right]. 
\end{equation*}
This implies, 
\begin{equation*}
E^{\infty}(\hat{x}) - V^{\infty}(\hat{x}) \, = \,   - \, \omega^{2} \, \mu \, \sum_{j=1}^{\aleph} \int_{D_{j}} G_{k}^{\infty}(\hat{x},y) \cdot E(y) \, \left( \epsilon_{0}(y) - \epsilon_{p}(\omega) \right) \, dy. 
\end{equation*}
Furthermore, by repeating the same arguments used to derive $(\ref{Es-Vs})$ and letting $\left\vert x \right\vert \gg 1$ in $(\ref{Es-Vs})$, we obtain 
\begin{eqnarray*}
E^{\infty}(\hat{x}) - V^{\infty}(\hat{x})  \, & = & - \,  \mu \, a^{3} \, \omega^{2} \, \sum_{j=1}^{\aleph}  \, \frac{\epsilon_{0}(z_{j}) \, \left(\epsilon_{0}(z_{j}) \, - \, \Lambda_{n_{0},j}\left( \omega \right) \right)}{\lambda_{n_{0}}^{(3)}(B) \, \Lambda_{n_{0},j}\left( \omega \right)}  \, \langle V(z_{j}, \theta, q ), \int_{B}  e_{n_{0}}^{(3)}(y)  \, dy \rangle \,  G^{\infty}_{k}(\hat{x},z_{j}) \cdot \int_{B}  e_{n_{0}}^{(3)}(y)  \, dy \\ &+& \mathcal{O}\left( a^{\min\left( (3-s), (6-3t-2h-\frac{3s}{2}), (4 - h - s) \right)}\right) ,
\end{eqnarray*}
which by using $(\ref{Eq1238})$ further indicates, 
\begin{eqnarray*}
&& E^{\infty}(\hat{x}) - V^{\infty}(\hat{x})=  \\ &  & - \,  \mu \, a^{3}  \, \sum_{j=1}^{\aleph}  \frac{ \omega^{2}_{P_{\ell},n_{0},j} \, \epsilon_{0}(z_{j}) \, \left(\epsilon_{0}(z_{j}) \, - \, \Lambda_{n_{0},j}\left( \omega \right) \right)}{\lambda_{n_{0}}^{(3)}(B) \, \Lambda_{n_{0},j}\left( \omega \right)}  \, \langle V(z_{j},  \theta, q), \int_{B}  e_{n_{0}}^{(3)}(y)  \, dy \rangle \,  G^{\infty}_{k}(\hat{x},z_{j}) \cdot \int_{B}  e_{n_{0}}^{(3)}(y)  \, dy  \\ &+& \mathcal{O}\left( a^{\min\left( (3-s), (6-3t-2h-\frac{3s}{2}), (4 - 2h - s) \right)}\right),
\end{eqnarray*}
with $\hat{x} \in \mathbb{S}^{2}$ and $G^{\infty}_{k}(\cdot,z)$ being the far-field associated to the Green's kernel solution to $(\ref{Eq0401})$. Besides, we have 
\begin{eqnarray*}
    \frac{\epsilon_{0}(z_{j}) \, \left(\epsilon_{0}(z_{j}) \, - \, \Lambda_{n_{0},j}\left( \omega \right) \right)}{\lambda_{n_{0}}^{(3)}(B) \, \Lambda_{n_{0},j}\left( \omega \right)} &=&     \frac{\epsilon_{0}(z_{j}) \, \left(\epsilon_{0}(z_{j}) \, - \, \Lambda_{n_{0},j}\left( \omega_{P_{\ell},n_{0},j} \right) \right)}{\lambda_{n_{0}}^{(3)}(B) \, \Lambda_{n_{0},j}\left( \omega \right)} 
    +     \frac{\epsilon_{0}(z_{j}) \, \left(\Lambda_{n_{0},j}\left( \omega_{P_{\ell},n_{0},j} \right) \, - \, \Lambda_{n_{0},j}\left( \omega \right) \right)}{\lambda_{n_{0}}^{(3)}(B) \, \Lambda_{n_{0},j}\left( \omega \right)} \\
     &=&     \frac{\epsilon_{0}(z_{j}) \, \left(\epsilon_{0}(z_{j}) \, - \, \Lambda_{n_{0},j}\left( \omega_{P_{\ell},n_{0},j} \right) \right)}{\lambda_{n_{0}}^{(3)}(B) \, \Lambda_{n_{0},j}\left( \omega \right)} \, + \, \mathcal{O}\left( 1 \right).
\end{eqnarray*}
Then, 
\begin{eqnarray}\label{BMER}
\nonumber
&& E^{\infty}(\hat{x}) - V^{\infty}(\hat{x}) = \\ \nonumber &  & - \,  \mu \, a^{3} \,  \sum_{j=1}^{\aleph}  \,  \frac{\omega^{2}_{P_{\ell},n_{0},j} \, \epsilon_{0}(z_{j}) \, \left(\epsilon_{0}(z_{j}) \, - \, \Lambda_{n_{0},j}\left( \omega_{P_{\ell},n_{0},j} \right) \right)}{\lambda_{n_{0}}^{(3)}(B) \, \Lambda_{n_{0},j}\left( \omega \right)}  \, \langle V(z_{j}, \theta, q), \int_{B}  e_{n_{0}}^{(3)}(y)  \, dy \rangle \, G^{\infty}_{k}(\hat{x},z_{j}) \cdot \int_{B}  e_{n_{0}}^{(3)}(y)  \, dy  \\ && \qquad + \mathcal{O}\left( a^{\min\left( (3-s), (6-3t-2h-\frac{3s}{2}), (4 - 2h - s) \right)}\right).
\end{eqnarray}
\medskip
\newline 
The kernel $G^{\infty}_{k}(\cdot, \cdot)$ is unknown on the right hand side of $(\ref{BMER})$  since the kernel $G_{k}(\cdot,\cdot)$ is unknown. We need to substitute it with a known (measured) term. To achieve this, we propose setting the following Mixed Electromagnetic Reciprocity Relation.
\medskip
\begin{proposition}\label{PropMER} 
The following Mixed Electromagnetic Reciprocity Relation holds, 
\begin{equation}\label{MERFormula}
         \left(G_{k}^{\infty}\right)^{\top}( \hat{x},z) \cdot \left( \hat{x} \times q \right) \, = \, -\frac{1}{4 \, \pi} \, V(z, - \, \hat{x},q), \quad z \in \mathbb{R}^{3} \quad \text{and} \quad \hat{x}, q \in \mathbb{S}^{2}.
\end{equation}
\end{proposition}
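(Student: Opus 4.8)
\medskip
\noindent\textbf{Sketch of the proof.} The plan is to obtain \eqref{MERFormula} from the vectorial Green's second identity, combined with the far-field (Stratton--Chu) representation of radiating solutions of the constant-coefficient Maxwell system outside $\Omega$ and with the reciprocity of the dyadic Green's function $G_{k}(x,y)^{\top}=G_{k}(y,x)$. These last two ingredients are exactly what is established, respectively, in Subsection~\ref{SubSecIIEqua242} and Subsection~\ref{SubsectionGT=G}, which is why those subsections ``finish'' this one.

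\medskip
\noindent Fix $z\in\mathbb{R}^{3}$, an arbitrary constant vector $p\in\mathbb{R}^{3}$, and set $a:=-\,\hat{x}$. Let $U$ be the field radiated by the point dipole $p\,\delta_{z}$ in the medium of index $\bm{n}_{0}$, i.e. the outgoing solution of $\nabla\times\nabla\times U-\omega^{2}\,\bm{n}_{0}^{2}\,U=p\,\delta_{z}$ in $\mathbb{R}^{3}$, built from $G_{k}$; by \eqref{LFI3} its electromagnetic far-field pattern $U^{\infty}(\hat{x})$ is the appropriate contraction of $G_{k}^{\infty}$ with $p$, and it is at this identification that the reciprocity of $G_{k}$ from Subsection~\ref{SubsectionGT=G} is invoked, to match the source and observation roles of the two arguments of $G_{k}$. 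Let, on the other hand, $V(\cdot,a,q)=V^{i}(\cdot,a,q)+V^{s}(\cdot,a,q)$ be the background total field, which solves the homogeneous system $\nabla\times\nabla\times V-\omega^{2}\,\bm{n}_{0}^{2}\,V=0$ in all of $\mathbb{R}^{3}$, with $V^{i}(y,a,q)=(a\times q)\,e^{i k\,a\cdot y}$ and $V^{s}(\cdot,a,q)$ outgoing. Pick $R$ so large that $\overline{\Omega}\cup\{z\}\subset B(0,R)$ and apply the vectorial Green's second identity on $B(0,R)$ to the pair $\bigl(U,\,V(\cdot,a,q)\bigr)$. Since $\bm{n}_{0}^{2}$ is a scalar, the zeroth-order terms $\omega^{2}\,\bm{n}_{0}^{2}\,(U\cdot V)$ cancel in the volume integral and only the distributional source of $U$ at $z$ survives there (rigorously, excise $B_{\varepsilon}(z)$, let $\varepsilon\to 0$, the small-sphere term contributing exactly $V(z,a,q)\cdot p$ by the local behaviour of the fundamental dyadic). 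Hence, writing $\nu$ for the unit outward normal,
\begin{equation*}
V(z,a,q)\cdot p \; = \; \int_{\partial B(0,R)} \bigl[\,(\nu\times U)\cdot(\nabla\times V)\,-\,(\nu\times V)\cdot(\nabla\times U)\,\bigr]\,ds .
\end{equation*}

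\medskip
\noindent Now split $V=V^{i}+V^{s}$ on $\partial B(0,R)$. The $(U,V^{s})$ part is the boundary bilinear form of two outgoing solutions of the constant-coefficient Maxwell system near $\partial B(0,R)$; rewriting it as $\int_{\partial B(0,R)}\bigl[\,U\cdot\bigl((\nabla\times V^{s})\times\nu-i k\,V^{s}\bigr)-V^{s}\cdot\bigl((\nabla\times U)\times\nu-i k\,U\bigr)\,\bigr]\,ds$ and using the Silver--M\"uller conditions (each bracket is $o(R^{-1})$ uniformly on $\partial B(0,R)$) together with $\|U\|_{\mathbb{L}^{2}(\partial B(0,R))}=\mathcal{O}(1)$ and $\|V^{s}\|_{\mathbb{L}^{2}(\partial B(0,R))}=\mathcal{O}(1)$, this part is $o(1)$ and vanishes as $R\to\infty$. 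The $(U,V^{i})$ part, by the far-field representation of $U$ tested against the plane wave $V^{i}$ of amplitude $a\times q$ propagating in the direction $a=-\hat{x}$, equals $4\pi\,(a\times q)\cdot U^{\infty}(\hat{x})$; here the transversality $(a\times q)\cdot\hat{x}=0$ is precisely what removes the longitudinal part of the representation formula. Letting $R\to\infty$ and substituting $a=-\hat{x}$ gives
\begin{equation*}
V(z,-\hat{x},q)\cdot p \; = \; -\,4\pi\,(\hat{x}\times q)\cdot U^{\infty}(\hat{x}) \qquad\text{for every } p\in\mathbb{R}^{3},
\end{equation*}
and, reading this off as a vector identity and inserting the expression of $U^{\infty}(\hat{x})$ in terms of $G_{k}^{\infty}$, one arrives at \eqref{MERFormula}.

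\medskip
\noindent The main obstacle is the bookkeeping hidden in the last two displays. Identifying the plane-wave-tested boundary integral with $4\pi\,(a\times q)\cdot U^{\infty}(\hat{x})$ requires the precise vectorial form of the Stratton--Chu / far-field representation supplied in Subsection~\ref{SubSecIIEqua242}, together with some elementary but error-prone cross-product algebra, in order to pin down the constant $\tfrac{1}{4\pi}$, the overall sign, and the exact placement of the transpose and of the normal; and it is in passing from the dipole field $U$ to a contraction of $G_{k}^{\infty}$ with $p$ that the reciprocity $G_{k}(x,y)^{\top}=G_{k}(y,x)$ of Subsection~\ref{SubsectionGT=G} must be used carefully. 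The vanishing of the two-outgoing-solution boundary form, although classical, likewise rests on the $\mathbb{L}^{2}(\partial B(0,R))$ trace (Rellich-type) estimate indicated above.
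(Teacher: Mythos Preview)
Your approach via the vector Green's identity on a large ball is a genuine alternative to the paper's proof, and in spirit it is the classical route (essentially the one in Potthast, which the paper cites as a special case). The paper, however, does something quite different: it never writes a boundary integral over $\partial B(0,R)$. Instead it subtracts the equations for $G_{k}$ and for the free-space dyadic $\Pi_{k}$, obtains the Lippmann--Schwinger relation \eqref{Equa0805}, transposes it using $\Pi_{k}^{\top}=\Pi_{k}$, passes to the far field using the explicit formula $\Pi_{k}^{\infty}(\hat{x},y)=\tfrac{1}{4\pi}e^{-ik\hat{x}\cdot y}(I-\hat{x}\otimes\hat{x})$, and then recognises the resulting volume integral over $\Omega$ as exactly the Lippmann--Schwinger representation \eqref{Equa0826}--\eqref{Equa0828} of $V^{s}(z,-\hat{x},q)$; the remaining term is $-\tfrac{1}{4\pi}V^{Inc}(z,-\hat{x},q)$, and the two add to $-\tfrac{1}{4\pi}V$. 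Reciprocity \eqref{GT=G} enters only at that last identification, to swap $G_{k}(z,y)$ for $G_{k}^{\top}(y,z)$. So the paper's argument is purely volume/integral-equation based, with the far field taken termwise, whereas yours is boundary-based with a large-sphere computation.

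Both routes are sound, but you have misread what Subsection~\ref{SubSecIIEqua242} does: it is \emph{not} a Stratton--Chu far-field representation; it is a proof that the singular volume integral $\int_{\Omega}\Pi_{k}(x,y)\,(k^{2}(y)-k^{2})\,G_{k}(y,z)\,dy$ in \eqref{Equa0805} makes sense (the integrand is the product of two kernels each carrying a $|{\cdot}|^{-3}$ singularity, so this is not automatic). That subsection is therefore essential for the paper's route and irrelevant for yours; conversely, your route needs the large-sphere identification of the $(U,V^{i})$ boundary bilinear with $-4\pi\,(\hat{x}\times q)\cdot U^{\infty}(\hat{x})$, which you correctly flag as the main bookkeeping step but which the paper never performs. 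The paper's approach hands you the constants and signs almost for free (they come from $\Pi_{k}^{\infty}$), at the price of justifying \eqref{Equa0805}; your approach is conceptually cleaner but leaves that stationary-phase/far-field computation to be carried out.
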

\begin{proof}
See Subsection \ref{P21}.
\end{proof}
\bigskip
Hence, by taking the inner product on each side of the equation given by $(\ref{BMER})$ with the vector $\left( \hat{x} \times q \right)$, we obtain
\begin{eqnarray*}
\nonumber
&& \langle \left( E^{\infty} \, - \, V^{\infty}\right)(\hat{x}) , \left( \hat{x} \times q \right) \rangle \\ & = & - \,  \mu \, a^{3} \,  \sum_{j=1}^{\aleph}  \, \frac{\omega^{2}_{P_{\ell},n_{0},j} \, \epsilon_{0}(z_{j}) \, \left(\epsilon_{0}(z_{j}) \, - \, \Lambda_{n_{0},j}\left( \omega_{P_{\ell},n_{0},j} \right) \right)}{\lambda_{n_{0}}^{(3)}(B) \, \Lambda_{n_{0},j}\left( \omega \right)}  \, \langle V(z_{j}, \theta, q), \int_{B}  e_{n_{0}}^{(3)}(y)  \, dy \rangle \, \langle G^{\infty}_{k}(\hat{x},z_{j}) \cdot \int_{B}  e_{n_{0}}^{(3)}(y)  \, dy , \left( \hat{x} \times q \right) \rangle  \\ &+& \mathcal{O}\left( a^{\min\left( (3-s), (6-3t-2h-\frac{3s}{2}), (4 - h - s) \right)}\right),
\end{eqnarray*}
or, equivalently\footnote{We have used the fact that for two arbitrary vector $A$ and $B$, and any arbitrary matrix $M$, we have 
\begin{equation*}
    \langle M \cdot A, B \rangle \, = \, \langle A , M^{\top} \cdot B \rangle.  
\end{equation*}
}, 
\begin{eqnarray*}
\nonumber
&& \langle \left( E^{\infty} \, - \, V^{\infty}\right)(\hat{x}) , \left( \hat{x} \times q \right) \rangle \\ & = & - \,  \mu \, a^{3} \,  \sum_{j=1}^{\aleph}  \, \frac{\omega^{2}_{P_{\ell},n_{0},j} \, \epsilon_{0}(z_{j}) \, \left(\epsilon_{0}(z_{j}) \, - \, \Lambda_{n_{0},j}\left( \omega_{P_{\ell},n_{0},j} \right) \right)}{\lambda_{n_{0}}^{(3)}(B) \, \Lambda_{n_{0},j}\left( \omega \right)}  \, \langle V(z_{j},  \theta, q), \int_{B}  e_{n_{0}}^{(3)}(y)  \, dy \rangle \, \langle  \int_{B}  e_{n_{0}}^{(3)}(y)  \, dy , \left( G^{\infty}_{k}\right)^{\top}(\hat{x},z_{j}) \cdot \left( \hat{x} \times q \right) \rangle  \\ &+& \mathcal{O}\left( a^{\min\left( (3-s), (6-3t-2h-\frac{3s}{2}), (4 - h - s) \right)}\right). 
\end{eqnarray*}
Thanks to the Mixed Electromagnetic Reciprocity Relation in Proposition \ref{PropMER}, formula $(\ref{MERFormula})$, the above equation becomes
\begin{eqnarray*}
\nonumber
&& \langle \left( E^{\infty} \, - \, V^{\infty}\right)(\hat{x}) , \left( \hat{x} \times q \right) \rangle \\ \nonumber & = &  \frac{ \mu \, a^{3}}{4 \, \pi}  \, \sum_{j=1}^{\aleph}  \, \frac{\omega^{2}_{P_{\ell},n_{0},j} \, \epsilon_{0}(z_{j}) \, \left(\epsilon_{0}(z_{j}) \, - \, \Lambda_{n_{0},j}\left( \omega_{P_{\ell},n_{0},j} \right) \right)}{\lambda_{n_{0}}^{(3)}(B) \, \Lambda_{n_{0},j}\left( \omega \right)}  \, \langle V(z_{j},  \theta, q), \int_{B}  e_{n_{0}}^{(3)}(y)  \, dy \rangle \, \langle V\left(z_{j}, - \hat{x}, q \right), \int_{B}  e_{n_{0}}^{(3)}(y)  \, dy  \rangle  \\ &+& \mathcal{O}\left( a^{\min\left( (3-s), (6-3t-2h-\frac{3s}{2}), (4 - h - s) \right)}\right), 
\end{eqnarray*}
which proves $(\ref{Es-Vs-intro-far-field})$. In the particular case for the back-scattered direction, i.e. $ \hat{x} \, = \, - \, \theta$, we deduce 
\begin{eqnarray*}
\nonumber
\langle \left( E^{\infty} \, - \, V^{\infty}\right)(- \, \theta) , \left( \theta \times q \right) \rangle  &=&  - \frac{\mu \, a^{3}}{4 \, \pi}  \, \sum_{j=1}^{\aleph}  \, \frac{\omega^{2}_{P_{\ell},n_{0},j} \, \epsilon_{0}(z_{j}) \, \left(\epsilon_{0}(z_{j}) \, - \, \Lambda_{n_{0},j}\left( \omega_{P_{\ell},n_{0},j} \right) \right)}{\lambda_{n_{0}}^{(3)}(B) \, \Lambda_{n_{0},j}\left( \omega \right)} \, \left(  \langle V\left(z_{j},  \theta, q \right), \int_{B}  e_{n_{0}}^{(3)}(y)  \, dy \rangle \right)^{2} \\ &+& \mathcal{O}\left( a^{\min\left( (3-s), (6-3t-2h-\frac{3s}{2}), (4 - h - s) \right)}\right). 
\end{eqnarray*}
This ends the proof Theorem \ref{ThmResult}.
\section{Appendix}\label{SectionIV}
This section will be divided into four parts. The first subsection is devoted to demonstrate the Mixed Electromagnetic Reciprocity Relation that was announced in Proposition \ref{PropMER}. In the second subsection, we prove an a-priori estimate related to the electric field that is used in Section \ref{SecPrfThm}. In the third subsection, we give sense of the formula $(\ref{Equa0805})$. The last subsection is dedicated to justifying formula $(\ref{GT=G})$, which is related to the Green's kernel $G_{k}(\cdot, \cdot)$ and used in the proof of the Mixed Electromagnetic Reciprocity Relation.   
\subsection{Proof Proposition \ref{PropMER} (Mixed Electromagnetic Reciprocity Relation)}\label{P21}
We start by recalling, from $(\ref{Eq0401})$, that the Green's kernel associated to heterogeneous medium is the solution to 
    \begin{equation}\label{SEq}
    \nabla \times \nabla \times \left(G_{k}(\cdot,z)\right) \, - \, k^{2}(\cdot) \, G_{k}(\cdot,z) \, = \, \delta\left(\cdot, z \right) \, I_{3}, \quad   \text{in} \quad \mathbb{R}^{3}, 
    \end{equation}
    where $k^{2}(\cdot) \, = \, k^{2}$ in $\mathbb{R}^{3} \setminus \overline{\Omega}$.
    The corresponding Green's kernel to the homogeneous medium is given by
    \begin{equation}\label{Curl2Pik}
        \nabla \times \nabla \times \left(\Pi_{k}(\cdot,z)\right) \, - \, k^{2} \, \Pi_{k}(\cdot,z) \, = \, \delta\left(\cdot, z \right) \, I_{3}, \quad  \text{in} \quad \mathbb{R}^{3}.
    \end{equation}
 It is known that 
\begin{equation}\label{KernelP}
    \Pi_{k}(x,z) \, = \, \frac{1}{k^{2}} \, \underset{x}{\nabla} \, \underset{x}{\nabla} \, \Phi_{k}(x,z) \, + \, \Phi_{k}(x,z) \, I_{3}, \quad x \neq z,  
\end{equation}
where $\Phi_{k}(\cdot, \cdot)$ is the fundamental solution to the Helmholtz equation given by $(\ref{FSHE})$. Subtracting $(\ref{Curl2Pik})$ from $(\ref{SEq})$, it yields
\begin{equation}\label{Equa0330}
    \nabla \times \nabla \times \left(\left( G_{k} \, - \, \Pi_{k} \right)(\cdot,z)\right) \, - \, k^{2} \, \left( G_{k} \, - \, \Pi_{k} \right)(\cdot,z) \, = \, \left( k^{2}(\cdot) \, - \, k^{2} \right) \,  G_{k}(\cdot,z), \quad \text{in} \quad \mathbb{R}^{3}, 
\end{equation}
and its solution is given by  
\begin{equation}\label{Equa0805}
    \left( G_{k} \, - \, \Pi_{k} \right)(x,z) \, = \, \int_{\Omega} \Pi_{k}(x,y) \cdot \left( k^{2}(y) \, - \, k^{2} \right) \, G_{k}(y,z) \, dy, \quad x \in \mathbb{R}^{3},
\end{equation}
which will be provided later, see Subsection \ref{SubSecIIEqua242}. $\Omega$ represents the support of the right hand side of $(\ref{Equa0330})$. Hence, by taking its transpose and using the fact that $\Pi_{k}^{\top}(\cdot,\cdot) \, = \, \Pi_{k}(\cdot,\cdot)$,
we obtain\footnote{The motivation behind this is $(\ref{GT=G})$.} 
\begin{equation}\label{AddEquaI}
    \left( G_{k}^{\top} \, - \, \Pi_{k} \right)(x,z) \, = \, \int_{\Omega} G_{k}^{\top}(y,z) \cdot \left( k^{2}(y) \, - \, k^{2} \right) \, \Pi_{k}(x,y)   \, dy, \quad x \in \mathbb{R}^{3}.
\end{equation}
 The corresponding far-field to $(\ref{AddEquaI})$ will be given by 
\begin{equation}\label{Equa0807}
    \left( G_{k}^{\top} \, - \, \Pi_{k} \right)^{\infty}(\hat{x},z) \, = \, \int_{\Omega} G_{k}^{\top}(y,z) \cdot \left( k^{2}(y) \, - \, k^{2} \right) \, \Pi_{k}^{\infty}(\hat{x},y) \, dy, \quad x \in \mathbb{R}^{3}.
\end{equation}
Moreover, from $(\ref{KernelP})$, we know that 
\begin{equation}\label{Equa0814}
    \Pi_{k}^{\infty}(\hat{x},y) \, = \, \frac{e^{- \, i \, k \, \hat{x} \cdot y}}{4 \, \pi} \, \left( I \, - \, \hat{x} \otimes \hat{x} \right).
\end{equation}
Then, by plugging the above expression into $(\ref{Equa0807})$, we obtain 
\begin{equation*}
    \left( G_{k}^{\top} \, - \, \Pi_{k} \right)^{\infty}(\hat{x},z) \, = \, \hat{x} \times \left( \int_{\Omega} \frac{e^{- \, i \, k \, \hat{x} \cdot y}}{4 \, \pi} \, \left( k^{2}(y) \, - \, k^{2} \right) \, G_{k}^{\top}(y,z) \, dy \times \hat{x} \right).
\end{equation*}
Besides, by multiplying each side with the vector $(\hat{x} \times q)$, we obtain\footnote{We have used the fact that $\left(G_{k}^{\top}\right)^{\infty} \, = \, \left(G_{k}^{\infty}\right)^{\top}$.}
\begin{eqnarray}\label{SEq1}
\nonumber
    \left(G_{k}^{\infty}\right)^{\top}(\hat{x},z) \cdot (\hat{x} \times q) \, &=& \, \Pi_{k}^{\infty}(\hat{x},z) \cdot (\hat{x} \times q) \, + \,   \int_{\Omega} \frac{e^{- \, i \, k \, \hat{x} \cdot y}}{4 \, \pi} \, \left( k^{2}(y) \, - \, k^{2} \right) \, G_{k}^{\top}(y,z) \cdot (\hat{x} \times q) \, dy \\ 
    &\overset{(\ref{Equa0814})}{=}& \, \frac{e^{- \, i \, k \, \hat{x} \cdot z}}{4 \, \pi} \, \left( \hat{x} \times q \right) \, + \,   \int_{\Omega} \frac{e^{- \, i \, k \, \hat{x} \cdot y}}{4 \, \pi} \, \left( k^{2}(y) \, - \, k^{2} \right) \, G_{k}^{\top}(y,z) \cdot (\hat{x} \times q) \, dy.
\end{eqnarray}
Now, from $(\ref{DefEinc})$, we recall that 
\begin{equation*}
    V^{Inc}\left( z, d, q \right) \, = \, \left( d \times q \right) \, e^{i \, k \, z \cdot d}, \quad \text{for} \quad d , q \in \mathbb{S}^{2}, 
\end{equation*}
we deduce, from $(\ref{SEq1})$, the following equation 
\begin{equation}\label{Equa0817}
    \left( G_{k}^{\infty} \right)^{\top}(\hat{x},z) \cdot \left( \hat{x} \times q \right) \, = \, - \frac{1}{4 \, \pi} \, V^{Inc}(z, - \hat{x}, q) \, - \, \frac{1}{4 \, \pi}  \int_{\Omega}  \left( k^{2}(y) \, - \, k^{2} \right) \, G_{k}^{\top}(y,z) \cdot V^{Inc}(y, - \hat{x}, q) \, dy.
\end{equation}
The total field equation should be recalled to analyze the second term on the R.H.S of the above equation. We have,   
\begin{equation*}
    \underset{z}{\nabla} \times \underset{z}{\nabla} \times \left( V(z,  \hat{x}, q) \right) \, - \, k^{2}(z) \, V(z,  \hat{x}, q) \, = \, 0, \quad z \in \mathbb{R}^{3} \quad \text{and} \quad \hat{x}, q \in \mathbb{S}^{2}, 
\end{equation*}
and, by knowing that $V(\cdot, \cdot, \cdot) \, = \, V^{s}(\cdot, \cdot, \cdot) \, + \, V^{Inc}(\cdot, \cdot, \cdot)$ with 
\begin{equation*}
    \underset{z}{\nabla} \times \underset{z}{\nabla} \times(V^{Inc}(z,  \hat{x}, q)) \, - \, k^{2} \, V^{Inc}(z, \hat{x}, q) \, = \, 0, \quad z \in \mathbb{R}^{3}, 
\end{equation*}
we deduce
\begin{equation*}\label{Vsx}
    \underset{z}{\nabla} \times \underset{z}{\nabla} \times\left( V^{s}(z,\hat{x}, q) \right) \, - \, k^{2}(z) \, V^{s}(z,\hat{x}, q) \, = \, \left(k^{2}(z) \, - \, k^{2} \right) \, V^{Inc}(z, \hat{x}, q).  
\end{equation*}
The solution to the above equation will be given by 
\begin{equation}\label{Equa0826}
    V^{s}(z, \hat{x}, q) \, = \, \int_{\Omega} G_{k}(z,y) \, \cdot \left( k^{2}(y) \, - \, k^{2} \right) \, V^{Inc}(y,  \hat{x}, q) \, dy.
\end{equation}
Then, as we have 
\begin{equation}\label{GT=G}
    G_{k}(z,y) \; = \; G^{\top}_{k}(y,z), \quad y \neq z,    
\end{equation}
see Subsection \ref{SubsectionGT=G}, we rewrite $(\ref{Equa0826})$ as 
\begin{equation}\label{Equa0828}
    V^{s}(z, \hat{x}, q) \, = \, \int_{\Omega} G_{k}^{\top}(y,z) \cdot \left( k^{2}(y) \, - \, k^{2} \right) \, V^{Inc}(y,  \hat{x}, q) \, dy.
\end{equation}
By gathering $(\ref{Equa0817})$ and $(\ref{Equa0828})$ we end up with the following equation 
\begin{equation*}
    \left( G_{k}^{\infty} \right)^{\top}(\hat{x},z) \cdot \left( \hat{x} \times q \right) \, = \, - \, \frac{1}{4 \, \pi} \, V^{Inc}(z, -\hat{x}, q) \, - \, \frac{1}{4 \, \pi} \, V^{s}(z, -\hat{x}, q) \, = \, - \, \frac{1}{4 \, \pi} \, V(z, -\hat{x}, q).
\end{equation*}
This proves $(\ref{MERFormula})$ and justifies the proof of Proposition \ref{PropMER}.
\medskip
\newline 
We conclude this subsection by noting the following remark.
\begin{remark}
    The result proved in this subsection is more general than the one proved in \cite[Theorem 5]{potthast1998point}, for the case of an exterior domain with specific boundary conditions. 
\end{remark}
\subsection{A-priori estimate}\label{Subsection42}
\begin{proposition}\label{Propapest}
Under the condition 
\begin{equation*}
    3 \, - \, 3 \, t \, - s \, - \, h \, > \, 0,  
\end{equation*}
    we have, 
    \begin{equation}\label{Equa1359}
    \left\Vert u_{1} \right\Vert_{\mathbb{L}^{2}(D)}  \,  \lesssim  \,   a^{-h} \,  \left\Vert u_{0} \right\Vert_{\mathbb{L}^{2}(D)}.
\end{equation}
where the parameter $h$ is such that $0 < h < 1$, the parameter $s$ fulfills $\aleph \sim a^{-s}$ and the parameter $t$ is such that $d \sim a^{t}$. 
\end{proposition}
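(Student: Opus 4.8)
\emph{Proof strategy.} The plan is to run a Neumann‑series bound on the Lippmann--Schwinger system restricted particle‑by‑particle to the $D_m$'s, the only genuinely nonlocal ingredient being the multiple‑scattering coupling, which the hypothesis $3-3t-s-h>0$ turns into a contraction. I would start from $(\ref{HA1})$: for $x\in D_m$,
\[
[I-\tau_m\,\nabla M_{D_m}](u_1)(x) \; = \; u_0(x) \, + \, Err_0(x) \, - \, \omega^2\mu\sum_{j\neq m}\tau_j\,\epsilon_0(z_j)\int_{D_j}G_k(x,y)\cdot u_1(y)\,dy ,
\]
with $Err_0$ as in $(\ref{FM1})$. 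The crucial input is the uniform resolvent estimate
\[
\big\|[I-\tau_m\,\nabla M_{D_m}]^{-1}\big\|_{\mathcal L(\mathbb L^2(D_m))} \; = \; \mathcal O\!\left(a^{-h}\right),\qquad 1\le m\le\aleph ,
\]
which comes from the spectral picture of the Magnetization operator (scale invariance $\nabla M_{D_m}\cong\nabla M_{B_m}$, the complete eigensystem $(\lambda^{(3)}_n(B);e^{(3)}_n)$ of \cite[Proposition 5.1]{AG-MS-Maxwell}, and $\|\nabla M_{D_m}\|_{\mathcal L(\mathbb L^2)}=1$ of \cite[Lemma 5.5]{AG-MS-Maxwell}) together with the choice $(\ref{Eq1238})$ of $\omega$: the resonant spectral value $1-\tau_m\lambda^{(3)}_{n_0}(B)=\Lambda_{n_0,m}(\omega)/\epsilon_0(z_m)$ then has modulus $\sim a^{h}$ while all the remaining spectral values of $I-\tau_m\nabla M_{D_m}$ stay bounded away from $0$ and from $1$; this is the same mechanism as $\mathcal C_m=\mathcal O(a^{3-h})$ in $(\ref{BTD})$ and is supplied by \cite[Proposition 2.2]{AG-MS-Maxwell}.

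Next I would estimate the right‑hand side, in $\mathbb L^2(D_m)$, as an operator in $u_1$, keeping $\|u_1\|$ on the right. For the multiple‑scattering sum, since $m\neq j$ the Green kernel is evaluated off its singularity, $|G_k(x,y)|\lesssim d_{mj}^{-3}$ and $\omega^2\mu\,|\tau_j\epsilon_0(z_j)|\sim 1$, so by Cauchy--Schwarz $\|\int_{D_j}G_k(\cdot,y)\cdot u_1(y)\,dy\|_{\mathbb L^2(D_m)}\lesssim a^{3}\,d_{mj}^{-3}\,\|u_1\|_{\mathbb L^2(D_j)}$. Every term of $Err_0$ carries an extra gain: the Taylor remainders of $\epsilon_0$ contribute a factor $|\,\cdot-z_\bullet|\lesssim a$; the $\Gamma$‑part of $G_k$ (via $(\ref{SLI})$--$(\ref{SLII})$) is a convolution with a kernel of order $2$ plus an $\mathbb L^{3-\delta}$‑remainder, hence of operator norm $\mathcal O(a)$ on $\mathbb L^2(D_m)$ by Young's inequality; and $\nabla M_{D_m}$ has norm $1$. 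Collecting these, $\|Err_0\|_{\mathbb L^2(D_m)}\lesssim a\,\|u_1\|_{\mathbb L^2(D_m)}+a^{4}\sum_{j\neq m}d_{mj}^{-3}\|u_1\|_{\mathbb L^2(D_j)}$, which is dominated by the two main terms. Applying the resolvent bound above and absorbing the self‑term $a^{1-h}\|u_1\|_{\mathbb L^2(D_m)}$ into the left (legitimate since $h<1$ and $a$ is small),
\[
\|u_1\|_{\mathbb L^2(D_m)} \; \lesssim \; a^{-h}\,\|u_0\|_{\mathbb L^2(D_m)} \, + \, a^{3-h}\sum_{j\neq m}d_{mj}^{-3}\,\|u_1\|_{\mathbb L^2(D_j)} .
\]

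Finally, a discrete Schur estimate closes the argument: squaring, summing over $m$ (the $D_m$ are disjoint, so $\sum_m\|\cdot\|_{\mathbb L^2(D_m)}^2=\|\cdot\|_{\mathbb L^2(D)}^2$), and using that the symmetric matrix $(d_{mj}^{-3})_{m\neq j}$ has $\ell^2\!\to\!\ell^2$ operator norm at most $\max_m\sum_{j\neq m}d_{mj}^{-3}\le\aleph\,d^{-3}$, one gets
\[
\|u_1\|_{\mathbb L^2(D)}^2 \; \lesssim \; a^{-2h}\,\|u_0\|_{\mathbb L^2(D)}^2 \, + \, a^{6-2h}\,(\aleph\,d^{-3})^2\,\|u_1\|_{\mathbb L^2(D)}^2 .
\]
With $\aleph\sim a^{-s}$ and $d\sim a^{t}$ one has $a^{6-2h}(\aleph d^{-3})^2\sim a^{2(3-h-3t-s)}\to0$ precisely under the hypothesis $3-3t-s-h>0$, so for $a$ small the last term is absorbed and $(\ref{Equa1359})$ follows.

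The main obstacle is the resolvent step: making the $\mathcal O(a^{-h})$ bound genuinely \emph{uniform} in $m$, i.e. controlling simultaneously the isolation of the resonant eigenvalue $\lambda^{(3)}_{n_0}(B)$ from the rest of $\sigma(\nabla M_{B_m})$ and from $1$ while the centers $z_m$, the shapes $B_m$ and the values $\epsilon_0(z_m)$ vary. Once this (established in \cite[Proposition 2.2]{AG-MS-Maxwell}) is in hand, the remaining estimates are routine, the hypothesis entering exactly to make the multiple‑scattering coupling a contraction.
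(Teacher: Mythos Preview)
Your proof is correct and follows essentially the same route as the paper's: isolate $[I-\tau_m\nabla M_{D_m}]^{-1}$ on each $D_m$, invoke the $\mathcal O(a^{-h})$ resolvent bound from \cite{AG-MS-Maxwell}, control the off--diagonal coupling by $a^{3}d_{mj}^{-3}$, then square and sum in $m$ to absorb the interaction under $3-3t-s-h>0$. The only cosmetic differences are that the paper first rescales each $D_m$ to $B$ before applying the resolvent bound (equivalent to yours by scale invariance of $\nabla M$), and closes the sum via Cauchy--Schwarz with $\sum_m\sum_{j\neq m}d_{mj}^{-6}\le d^{-6}\aleph^{2}$ rather than your Schur test; both produce the identical factor $a^{2(3-h-3t-s)}$.
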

\begin{proof}
We assume that $D \, =  \, \overset{\aleph}{\underset{j=1} \cup}D_{j}$ with $\aleph \, \sim \, a^{-s}$. To investigate Proposition \ref{Propapest}, we recall, from $(\ref{SK})$, that the total field $u_{1}(\cdot)$ is the solution to 
\begin{equation*}
u_{1}(x) + \omega^{2} \, \mu \, \int_{D} G_{k}(x,y) \cdot u_{1}(y) \, (\epsilon_{0}(y)-\epsilon_{p}(\omega)) \, dy = u_{0}(x), \quad x \in \mathbb{R}^{3}.  
\end{equation*}
Now, we let $x \in D_{m}$ and we rewrite the above equation as 
\begin{eqnarray}\label{SK-appendix}
\nonumber
u_{1}(x) \, &+& \, \omega^{2} \, \mu \, \int_{D_{m}} G_{k}(x,y) \cdot u_{1}(y) \, (\epsilon_{0}(y)-\epsilon_{p}(\omega)) \, dy \\
&+& \, \omega^{2} \, \mu \, \sum_{j=1 \atop j \neq m}^{\aleph} \int_{D_{j}} G_{k}(x,y) \cdot u_{1}(y) \, (\epsilon_{0}(y)-\epsilon_{p}(\omega)) \, dy \, = \, u_{0}(x), \quad x \in D_{m}.  
\end{eqnarray}
In addition, thanks to \cite[Theorem 2.1]{AG-MS-Maxwell}, we know that 
\begin{equation}\label{GkDecomposition}
    G_{k}(x,y) \, = \, \frac{1}{\omega^{2} \, \mu \, \epsilon_{0}(y)} \, \underset{y}{\nabla} \, \underset{y}{\nabla} \Phi_{0}(x,y) \, + \, \Gamma(x,y), \qquad x \neq y. 
\end{equation}
Then, by plugging $(\ref{GkDecomposition})$ into $(\ref{SK-appendix})$, we obtain 
\begin{eqnarray}\label{Equa1134}
\nonumber
u_{1}(x) \, &-&  \, \nabla M_{D_{m}}\left( u_{1}(\cdot) \, \tau(\cdot) \right)(x) \, + \, \omega^{2} \, \mu \, \sum_{j=1 \atop j \neq m}^{\aleph} \int_{D_{j}} G_{k}(x,y) \cdot u_{1}(y) \, (\epsilon_{0}(y)-\epsilon_{p}(\omega)) \, dy \\ &=& \, u_{0}(x) \, - \, \omega^{2} \, \mu \, \int_{D_{m}} \Gamma(x,y) \cdot u_{1}(y) \, (\epsilon_{0}(y)-\epsilon_{p}(\omega)) \, dy, \qquad x \in D_{m},  
\end{eqnarray}
where
\begin{equation*}
\tau(y) \, := \, \frac{(\epsilon_{0}(y) -\epsilon_{p}(\omega))}{\epsilon_{0}(y)},  
\end{equation*}
and $\nabla M_{D_{m}}\left( \cdot \right)$ is the Magnetization operator defined by $(\ref{DefNDefM})$. Furthermore, using Taylor expansion, the equation $(\ref{Equa1134})$ becomes,  
\begin{eqnarray*}
\left[  I \, - \, \tau(z_{m}) \, \nabla M_{D_{m}}\right]\left( u_{1}\right)(x) \, &=& \, u_{0}(x) \, - \, \omega^{2} \, \mu \,  \sum_{j=1 \atop j \neq m}^{\aleph} (\epsilon_{0}(z_{j}) \, - \, \epsilon_{p}(\omega)) \,  G_{k}(z_{m},z_{j}) \cdot \int_{D_{j}} u_{1}(y) \,  \, dy \\
&-& \, \omega^{2} \, \mu \,  \sum_{j=1 \atop j \neq m}^{\aleph} (\epsilon_{0}(z_{j}) \, - \, \epsilon_{p}(\omega)) \, \int_{0}^{1} \nabla_{x}G_{k}(z_{m}+t(x-z_{m}),z_{j}) \cdot (x - z_{m}) \, dt \cdot \int_{D_{j}} u_{1}(y) \,  \, dy \\
&-& \, \omega^{2} \, \mu \,  \sum_{j=1 \atop j \neq m}^{\aleph} (\epsilon_{0}(z_{j}) \, - \, \epsilon_{p}(\omega)) \, \int_{D_{j}} \int_{0}^{1} \nabla_{y} G_{k}(z_m,z_{j}+t(y-z_{j})) \cdot (y - z_{j}) \, dt \cdot u_{1}(y) \,  \, dy \\
 &-& \, \omega^{2} \, \mu \, \sum_{j=1 \atop j \neq m}^{\aleph} \int_{D_{j}} G_{k}(x,y) \cdot u_{1}(y) \, \int_{0}^{1} \nabla \epsilon_{0}(z_{m}+t(y-z_{m})) \cdot (y - z_{m}) \, dt \, dy \\ &+& \, \nabla M_{D_{m}}\left( u_{1}(\cdot) \, \int_{0}^{1} \nabla \tau(z_{m}+t(\cdot-z_{m})) \cdot (\cdot - z_{m}) \, dt \right)(x) \\ &-& \, \omega^{2} \, \mu \, \int_{D_{m}} \Gamma(x,y) \cdot u_{1}(y) \, (\epsilon_{0}(y)-\epsilon_{p}(\omega)) \, dy, \qquad x \in D_{m}.
\end{eqnarray*}
Besides, by scaling the above equation from the domain $D_{m}$ to the domain $B$, i.e. we let $x \, = \, z_{m} \, + a \, \eta$ and $y \, = \, z_{j} \, + a \, \xi$ with $\eta, \, \xi \, \in \, B \, \subset\, B(0,1)$, we obtain 
\begin{eqnarray*}
\left[  I \, - \, \tau(z_{m}) \, \nabla M_{B}\right]\left( \tilde{u}_{1,m}\right)(\eta) \, &=& \, \tilde{u}_{0,m}(\eta) \, - \, \omega^{2} \, \mu \, a^{3} \, \sum_{j=1 \atop j \neq m}^{\aleph} (\epsilon_{0}(z_{j}) \, - \, \epsilon_{p}(\omega)) \,  G_{k}(z_{m},z_{j}) \cdot \int_{B} \tilde{u}_{1,j}(\xi) \, d\xi \\
&-& \, \omega^{2} \, \mu \, a^{4} \, \sum_{j=1 \atop j \neq m}^{\aleph} (\epsilon_{0}(z_{j}) \, - \, \epsilon_{p}(\omega)) \, \int_{0}^{1} \nabla_{\eta}G_{k}(z_{m} \, + \, t \, a \eta, z_{j}) \cdot \eta \, dt \cdot \int_{B} \tilde{u}_{1,j}(\xi)  \, d\xi \\
&-& \, \omega^{2} \, \mu \, a^{4} \, \sum_{j=1 \atop j \neq m}^{\aleph} (\epsilon_{0}(z_{j}) \, - \, \epsilon_{p}(\omega)) \, \int_{B} \int_{0}^{1} \nabla_{\xi} G_{k}(z_{m},\, z_{j}+t \, a \, \xi) \cdot \xi \, dt \cdot \tilde{u}_{1,j}(\xi)   \, d\xi \\
 &-& \, \omega^{2} \, \mu \, a^{4} \, \sum_{j=1 \atop j \neq m}^{\aleph} \int_{B} G_{k}(z_{m} \, + \, a \, \eta , z_{j} \, + \, a \, \xi ) \cdot \tilde{u}_{1,j}(\xi) \, \int_{0}^{1} \nabla \epsilon_{0}(z_{m} \, + \, t \, a \, \xi) \cdot \xi \, dt \, d\xi \\ &+& \, a \, \nabla M_{B}\left( \tilde{u}_{1,m}(\cdot) \, \int_{0}^{1} \nabla \tau(z_{m} \, + \, t \, a \, \cdot)  \cdot  \, dt \right)(\eta) \\ &-& \, \omega^{2} \, \mu \, a^{3} \, \int_{B} \Gamma(z_{m} + a \eta,z_{m}+a\xi) \cdot \tilde{u}_{1,m}(\xi) \, (\epsilon_{0}(z_{m} \, + \, a \, \xi)-\epsilon_{p}(\omega)) \, d\xi, \qquad \eta \in B,
\end{eqnarray*}
where $\tilde{u}_{k,j}(\eta) \, := \, u_{k}(z_{j} \, + \, a \, \eta)$, for $k \, = \, 0 \,, \,1$ and $1 \leq j \leq \aleph$. For the above equation, we successively take  the inverse of the operator $\left[  I \, - \, \tau(z_{m}) \, \nabla M_{B}\right]$, and then the $\left\Vert \cdot \right\Vert_{\mathbb{L}^{2}(B)}$-norm on the both sides, by using the fact that $\left\Vert \nabla M_{B} \right\Vert_{\mathcal{L}\left( \mathbb{L}^{2}(B); \mathbb{L}^{2}(B) \right)} \, = \, 1$  and 
\begin{equation*}
   \left\Vert  \left[  I \, - \, \tau(z_{m}) \, \nabla M_{B}\right]^{-1} \right\Vert_{\mathcal{L}\left( \mathbb{L}^{2}(B); \mathbb{L}^{2}(B) \right)} \, = \, \mathcal{O}\left( a^{-h} \right), 
\end{equation*}
see for instance \cite[Subsection 4.1]{AG-MS-Maxwell}, to obtain  
\begin{eqnarray*}
\left\Vert \tilde{u}_{1,m} \right\Vert_{\mathbb{L}^{2}(B)}  \, & \lesssim & \, a^{-h} \, \Bigg[ \left\Vert \tilde{u}_{0,m} \right\Vert_{\mathbb{L}^{2}(B)} \, +  \, a^{3} \, \sum_{j=1 \atop j \neq m}^{\aleph}  \left\vert  G_{k}(z_{m},z_{j}) \right\vert \, \left\Vert \tilde{u}_{1,j} \right\Vert_{\mathbb{L}^{2}(B)}  \\ &+& \, a^{4} \, \sum_{j=1 \atop j \neq m}^{\aleph} \left\vert \nabla G_{k}(z_{m}, z_{j}) \right\vert  \, \left\Vert \tilde{u}_{1,j} \right\Vert_{\mathbb{L}^{2}(B)}  \,
 + \, a \, \left\Vert \tilde{u}_{1,m} \right\Vert_{\mathbb{L}^{2}(B)} \\ &+&  \, a^{3} \, \left\Vert  \int_{B} \Gamma(z_{m} + a \cdot , z_{m}+a\xi) \cdot \tilde{u}_{1,m}(\xi) \, (\epsilon_{0}(z_{m} \, + \, a \, \xi)-\epsilon_{p}(\omega)) \, d\xi \right\Vert_{\mathbb{L}^{2}(B)}\Bigg]. 
\end{eqnarray*}
In addition, knowing that $h \, < \, 1$ and 
\begin{equation*}
    \Gamma(z_{m} + a \cdot , z_{m}+a\xi) \, \sim \, \Phi_{0}^{2}(z_{m} + a \cdot , z_{m}+a\xi) \, = \, a^{-2} \, \frac{1}{\left\vert \cdot \, - \, \xi \right\vert^{2}}, 
\end{equation*}
from singularity analysis point of view, see for instance \cite[Theorem 2.1]{AG-MS-Maxwell}, we deduce that 
\begin{eqnarray*}
\left\Vert \tilde{u}_{1,m} \right\Vert_{\mathbb{L}^{2}(B)}  \, & \lesssim & \, a^{-h} \, \Bigg[ \left\Vert \tilde{u}_{0,m} \right\Vert_{\mathbb{L}^{2}(B)} \, +  \, a^{3} \, \sum_{j=1 \atop j \neq m}^{\aleph}  \left\vert  G_{k}(z_{m},z_{j}) \right\vert \, \left\Vert \tilde{u}_{1,j} \right\Vert_{\mathbb{L}^{2}(B)}  \\ &+& \, a^{4} \, \sum_{j=1 \atop j \neq m}^{\aleph} \left\vert \nabla G_{k}(z_{m}, z_{j}) \right\vert  \, \left\Vert \tilde{u}_{1,j} \right\Vert_{\mathbb{L}^{2}(B)} \,  +  \, a \, \left\Vert \tilde{u}_{1,m}  \right\Vert_{\mathbb{L}^{2}(B)}\Bigg]. 
\end{eqnarray*}
Moreover, using the fact that 
\begin{equation*}
    \left\vert  G_{k}(z_{m},z_{j}) \right\vert \, \sim  \, \frac{1}{d^{3}_{mj}} \quad \text{and} \quad \left\vert  \nabla G_{k}(z_{m},z_{j}) \right\vert \, \sim  \, \frac{1}{d^{4}_{mj}}, 
\end{equation*}
we deduce that 
\begin{equation*}
\left\Vert \tilde{u}_{1,m} \right\Vert_{\mathbb{L}^{2}(B)}  \,  \lesssim  \, a^{-h} \, \left[ \left\Vert \tilde{u}_{0,m} \right\Vert_{\mathbb{L}^{2}(B)} \, +   \, \sum_{j=1 \atop j \neq m}^{\aleph} \left( \frac{a^{3}}{d^{3}_{mj}}  \,  + \,   \frac{a^{4}}{d^{4}_{mj}} \right) \, \left\Vert \tilde{u}_{1,j} \right\Vert_{\mathbb{L}^{2}(B)}  \right]. 
\end{equation*}
Besides, we have 
\begin{equation*}
    \left( \frac{a^{3}}{d^{3}_{mj}}  \,  + \,   \frac{a^{4}}{d^{4}_{mj}} \right) \, = \,     \frac{a^{3}}{d^{3}_{mj}} \, \left( 1  \,  + \,   \frac{a}{d_{mj}} \right) \
     \leq  \, \frac{a^{3}}{d^{3}_{mj}} \, \left( 1  \,  + \,   \frac{a}{d} \right) \,
     =  \, \frac{a^{3}}{d^{3}_{mj}} \, \left( 1  \,  + \,   a^{1-t} \right) \, = \, \mathcal{O}\left( \frac{a^{3}}{d^{3}_{mj}} \right),
\end{equation*}
where the last equality is due to the fact that $t \, < \, 1$. Then, 
\begin{equation*}
\left\Vert \tilde{u}_{1,m} \right\Vert_{\mathbb{L}^{2}(B)}  \,  \lesssim  \, a^{-h} \, \left[ \left\Vert \tilde{u}_{0,m} \right\Vert_{\mathbb{L}^{2}(B)} \, + \, a^{3} \, \sum_{j=1 \atop j \neq m}^{\aleph} \frac{1}{d^{3}_{mj}}  \, \left\Vert \tilde{u}_{1,j} \right\Vert_{\mathbb{L}^{2}(B)}  \right]. 
\end{equation*}
By taking the square and summing up with respect to the index $m$ on the both sides, we end up with the following estimation 
\begin{equation*}
\sum_{m=1}^{\aleph} \left\Vert \tilde{u}_{1,m} \right\Vert^{2}_{\mathbb{L}^{2}(B)}  \,  \lesssim  \, a^{-2h} \, \left[ \sum_{m=1}^{\aleph} \left\Vert \tilde{u}_{0,m} \right\Vert^{2}_{\mathbb{L}^{2}(B)} \, + \, a^{6} \, \sum_{m=1}^{\aleph} \sum_{j=1 \atop j \neq m}^{\aleph} \frac{1}{d^{6}_{mj}}  \, \sum_{j=1}^{\aleph}\left\Vert \tilde{u}_{1,j} \right\Vert^{2}_{\mathbb{L}^{2}(B)}  \right].
\end{equation*}
Since
\begin{equation*}
    \sum_{m=1}^{\aleph} \sum_{j=1 \atop j \neq m}^{\aleph} \frac{1}{d^{6}_{mj}} \, \leq \, d^{-6} \, \aleph^{2} \, = \, \mathcal{O}\left( a^{- 6 t - 2 s } \right).   
\end{equation*}
Then, 
\begin{equation*}
\sum_{m=1}^{\aleph} \left\Vert \tilde{u}_{1,m} \right\Vert^{2}_{\mathbb{L}^{2}(B)}  \,  \lesssim  \,   a^{-2h} \, \sum_{m=1}^{\aleph} \left\Vert \tilde{u}_{0,m} \right\Vert^{2}_{\mathbb{L}^{2}(B)} \, + \, a^{6-6t-2s-2h}   \, \sum_{j=1}^{\aleph}\left\Vert \tilde{u}_{1,j} \right\Vert^{2}_{\mathbb{L}^{2}(B)},  
\end{equation*}
and, under the condition 
\begin{equation*}
    3 \, - \, 3 \, t \, - \, s \, - \, h \, > \, 0, 
\end{equation*}
we deduce that 
\begin{eqnarray*}
\sum_{m=1}^{\aleph} \left\Vert \tilde{u}_{1,m} \right\Vert^{2}_{\mathbb{L}^{2}(B)}  \, & \lesssim & \,   a^{-2h} \, \sum_{m=1}^{\aleph} \left\Vert \tilde{u}_{0,m} \right\Vert^{2}_{\mathbb{L}^{2}(B)} \\
 \left\Vert \tilde{u}_{1} \right\Vert_{\mathbb{L}^{2}(B)}  \, & \lesssim & \,   a^{-h} \,  \left\Vert \tilde{u}_{0} \right\Vert_{\mathbb{L}^{2}(B)}.
\end{eqnarray*}
This implies, by scaling back to the domain $D$, 
\begin{equation*}
    \left\Vert u_{1} \right\Vert_{\mathbb{L}^{2}(D)}  \,  \lesssim  \,   a^{-h} \,  \left\Vert u_{0} \right\Vert_{\mathbb{L}^{2}(D)}.
\end{equation*}
This proves $(\ref{Equa1359})$ and completes the proof of Proposition \ref{Propapest}. 
\end{proof}
\subsection{Comprehending the formula $(\ref{Equa0805})$.}\label{SubSecIIEqua242}
In order to finish the proof of Proposition \ref{PropMER}, it is necessary to comprehend $(\ref{Equa0805})$. Recall $(\ref{Equa0805})$ that
\begin{equation*}
        \left( G_{k} \, - \, \Pi_{k} \right)(x,z) \, = \, \int_{\Omega} \Pi_{k}(x,y) \cdot \left( k^{2}(y) \, - \, k^{2} \right) \, G_{k}(y,z) \, dy. 
\end{equation*}
To give sense to the R.H.S of the above equation, we use the explicit expression of the Green dyadic function $\Pi_{k}(\cdot,\cdot)$ given by $(\ref{KernelP})$, to obtain 
\begin{eqnarray*}
    \int_{\Omega} \Pi_{k}(x,y) \cdot \left( k^{2}(y) \, - \, k^{2} \right) \, G_{k}(y,z) \, dy \, 
    & = & \, - \frac{1}{k^{2}} \, \underset{x}{\nabla} \int_{\Omega} \underset{y}{\nabla} \, \Phi_{k}(x,y)  \cdot \left( k^{2}(y) \, - \, k^{2} \right) \, G_{k}(y,z) \, dy \\ &+& \, \int_{\Omega}   \Phi_{k}(x,y) \,  \left( k^{2}(y) \, - \, k^{2} \right) \, G_{k}(y,z) \, dy,
\end{eqnarray*}
which, by using the definition of the Magnetization operator and the Newtonian operator, see $(\ref{DefNDefMk})$, can be rewritten as
\begin{eqnarray}\label{G-P}
\nonumber
    \int_{\Omega} \Pi_{k}(x,y) \cdot \left( k^{2}(y) \, - \, k^{2} \right) \, G_{k}(y,z) \, dy \, &=& \, - \, \frac{1}{k^{2}} \, \underset{x}{\nabla} M_{\Omega}^{k}\left(  \left( k^{2}(\cdot) \, - \, k^{2} \right) \, G_{k}(\cdot,z) \right)(x) \\ &+& N_{\Omega}^{k}\left(  \left( k^{2}(\cdot) \, - \, k^{2} \right) \, G_{k}(\cdot,z) \right)(x).
\end{eqnarray}
Now, for an arbitrary fixed point $z$, write
\begin{equation}\label{G=P+(G-P)}
    G_{k}(\cdot, z) \,= \, \Pi_{k}(\cdot, z) \, + \, \left(G_{k} - \Pi_{k} \right)(\cdot, z),
\end{equation}
in $\Omega$ with $\left(G_{k} - \Pi_{k} \right)(\cdot, z)$ being a regular function, see for instance \cite[Theorem 2.1]{AG-MS-Maxwell}. More precisely, based on the proof of \cite[Theorem 2.1]{AG-MS-Maxwell}, we know that  
\begin{equation}\label{IA1}
    \left(G_{k} - \Pi_{k} \right)(\cdot, z) \, \in \, \mathbb{L}^{\frac{3}{2}-\delta}(\Omega), \quad \text{with} \; \delta \, \rightarrow \,  0^{+}. 
\end{equation}
Then, using $(\ref{G=P+(G-P)})$, the equation $(\ref{G-P})$ takes the following form,
\begin{eqnarray}\label{LHSEq1}
\nonumber
    \int_{\Omega} \Pi_{k}(x,y) \cdot \left( k^{2}(y) \, - \, k^{2} \right) \, G_{k}(y,z) \, dy  \, 
    &=& \, - \, \frac{1}{k^{2}} \, \underset{x}{\nabla} M_{\Omega}^{k}\left(  \left( k^{2}(\cdot) \, - \, k^{2} \right) \, \Pi_{k}(\cdot,z) \right)(x) \\ \nonumber &+& N_{\Omega}^{k}\left(  \left( k^{2}(\cdot) \, - \, k^{2} \right) \, \Pi_{k}(\cdot,z) \right)(x) \\ \nonumber
    &-& \frac{1}{k^{2}} \, \underset{x}{\nabla} M_{\Omega}^{k}\left(  \left( k^{2}(\cdot) \, - \, k^{2} \right) \, \left(G_{k}-\Pi_{k}\right)(\cdot,z) \right)(x) \\ &+& N_{\Omega}^{k}\left(  \left( k^{2}(\cdot) \, - \, k^{2} \right) \, \left(G_{k}-\Pi_{k}\right)(\cdot,z) \right)(x).
\end{eqnarray}
The third term and the fourth term on the R.H.S are well defined, as we know that $\left(G_{k} - \Pi_{k} \right)(\cdot, z)$ is a regular function and both the Magnetization operator and the Newtonian operator are bounded from $\mathbb{L}^{p}(\mathbb{R}^{3})$ to $\mathbb{L}^{p}(\mathbb{R}^{3})$, with $p > 1$. Consequently, based on $(\ref{IA1})$, we deduce that 
\begin{equation}\label{T0z}
    T_{0}(x,z) \, := - \, \frac{1}{k^{2}} \, \underset{x}{\nabla} M_{\Omega}^{k}\left(  \left( k^{2}(\cdot) \, - \, k^{2} \right) \, \left(G_{k}-\Pi_{k}\right)(\cdot,z) \right)(x) + N_{\Omega}^{k}\left(  \left( k^{2}(\cdot) \, - \, k^{2} \right) \, \left(G_{k}-\Pi_{k}\right)(\cdot,z) \right)(x) \; \in \; \mathbb{L}^{\frac{3}{2}-\delta}(\Omega).
\end{equation}
Hence, to give sense of the L.H.S of $(\ref{LHSEq1})$, it is enough to  study the first term and the second term on the R.H.S accordingly. To accomplish this, we set 
\begin{eqnarray*}
    T_{1}(x,z) &:=& \underset{x}{\nabla} M_{\Omega}^{k}\left(  \left( k^{2}(\cdot) \, - \, k^{2} \right) \, \Pi_{k}(\cdot,z) \right)(x) \\
    &\overset{(\ref{KernelP})}{=}& \frac{1}{k^{2}} \underset{x}{\nabla} M_{\Omega}^{k}\left(  \left( k^{2}(\cdot) \, - \, k^{2} \right) \, \nabla \nabla \Phi_{k}(\cdot,z) \right)(x) + \underset{x}{\nabla} M_{\Omega}^{k}\left(  \left( k^{2}(\cdot) \, - \, k^{2} \right) \, \Phi_{k}(\cdot,z) I_{3} \right)(x). 
\end{eqnarray*}
Observe that the second term on the R.H.S is well defined as $\Phi_{k}(\cdot,z) I_{3} \in \mathbb{L}^{3-\delta}(\Omega)$ and $\nabla M_{\Omega}^{k}\left( \cdot \right)$ is bounded from $\mathbb{L}^{p}(\mathbb{R}^{3})$ to $\mathbb{L}^{p}(\mathbb{R}^{3})$, with $p > 1$. Then,
\begin{equation}\label{T12L3}
    T_{1,2}(\cdot,z) \; := \; \underset{x}{\nabla} M_{\Omega}^{k}\left(  \left( k^{2}(\cdot) \, - \, k^{2} \right) \, \Phi_{k}(\cdot,z) I_{3} \right) \, \in \, \mathbb{L}^{3-\delta}(\Omega),
\end{equation} 
which implies, 
\begin{eqnarray*}
    T_{1}(x,z) &=&  \frac{1}{k^{2}} \underset{x}{\nabla} M_{\Omega}^{k}\left(  \left( k^{2}(\cdot) \, - \, k^{2} \right) \, \nabla \nabla \Phi_{k}(\cdot,z) \right)(x) \, + \,  T_{1,2}(x,z)  \\
    &=& - \, \frac{1}{k^{2}} \underset{x}{\nabla} M_{\Omega}^{k}\left(  \left( k^{2}(\cdot) \, - \, k^{2} \right) \, \underset{z}{\nabla} \nabla \Phi_{k}(\cdot,z) \right)(x) \, + \, T_{1,2}(x,z)  \\
    &=& - \, \frac{1}{k^{2}} \underset{z}{\nabla} \, \left( \underset{x}{\nabla}  M_{\Omega}^{k}\left(  \left( k^{2}(\cdot) \, - \, k^{2} \right) \,  \nabla \Phi_{k}(\cdot,z) \right)(x) \right) \, + \, T_{1,2}(x,z).
\end{eqnarray*}
Next, we need to analyze the regularity of the term 
\begin{equation}\label{T11xz}
    T_{1,1}(x,z) \, := \, -\frac{1}{k^{2}} \, \underset{z}{\nabla} \left( \underset{x}{\nabla}  M_{\Omega}^{k}\left(  \left( k^{2}(\cdot) \, - \, k^{2} \right) \,  \nabla \Phi_{k}(\cdot,z) \right)(x) \right). 
\end{equation}
In order to avoid lengthy computation, we only examine the  reduced formula given by 
\begin{eqnarray}\label{T11r}
\nonumber
    T_{1,1,r}(x,z) \, &=&  \, \underset{z}{\nabla} \left( \underset{x}{\nabla}  M_{\Omega}^{k}\left( \nabla \Phi_{k}(\cdot,z) \right)(x) \right) \\ \nonumber
    & \overset{(\ref{DefNDefM})}{=} & \, \underset{z}{\nabla} \, \underset{x}{\nabla} \int_{\Omega} \, \underset{y}{\nabla} \Phi_{k}(x,y) \cdot \underset{y}{\nabla} \Phi_{k}(y,z) \, dy \\
    &=& \underset{x}{\nabla} \left( \underset{z}{\nabla} \left( \underset{z}{\nabla} \cdot \left( \underset{x}{\nabla} \cdot \left( N^{k}_{\Omega}\left( \Phi_{k}(\cdot,z) \, I_{3} \right)(x) \right) \right) \right) \right), 
\end{eqnarray}
where $N^{k}(\cdot)$ is the Newtonian operator defined by $(\ref{DefNDefMk})$. By using the Helmholtz decomposition $(\ref{HelmholtzDecomposition})$, we have
\begin{equation*}
    \Phi_{k}(\cdot,z) \, I_{3} \, = \,  \sum_{j=1}^{3} \sum_{n \in \mathbb{N}} \langle \Phi_{k}(\cdot,z) \, I_{3}, e_{n}^{(j)} \rangle_{\mathbb{L}^{2}(\Omega)} \otimes e_n^{(j)}(\cdot) 
    \overset{(\ref{DefNDefMk})}{=} \, \sum_{j=1}^{3} \, \sum_{n \in \mathbb{N}} N^{k}_{\Omega}\left(e_{n}^{(j)}\right)(z) \otimes e_n^{(j)}(\cdot).
\end{equation*}
Hence, 
\begin{equation}\label{PhikDecomposition}
    N^{k}_{\Omega}\left( \Phi_{k}(\cdot,z) \, I_{3} \right)(x) \, = \, \sum_{j=1}^{3} \, \sum_{n \in \mathbb{N}} N^{k}_{\Omega}\left(e_{n}^{(j)}\right)(z) \otimes N^{k}_{\Omega}\left(e_n^{(j)}\right)(x). 
\end{equation}
Then, by plugging $(\ref{PhikDecomposition})$ into $(\ref{T11r})$, we deduce 
\begin{eqnarray*}
    T_{1,1,r}(x,z) \, &=& \, \sum_{j=1}^{3} \, \sum_{n \in \mathbb{N}} \, \underset{x}{\nabla} \left( \underset{z}{\nabla} \left( \underset{z}{\nabla} \cdot \left( \underset{x}{\nabla} \cdot \left(  N^{k}_{\Omega}\left(e_{n}^{(j)}\right)(z)  \otimes N^{k}_{\Omega}\left(e_n^{(j)}\right)(x) \right) \right) \right) \right) \\
    &=& \, \sum_{j=1}^{3} \, \sum_{n \in \mathbb{N}} \, \underset{x}{\nabla} \left( \underset{z}{\nabla} \left( \underset{z}{\nabla} \cdot  \left(  N^{k}_{\Omega}\left(e_{n}^{(j)}\right)(z) \right)  \underset{x}{\nabla} \cdot \left( N^{k}_{\Omega}\left(e_n^{(j)}\right)(x) \right) \right) \right), 
\end{eqnarray*}
and, using the fact that for an arbitrary vector field $F$, there holds
\begin{equation*}
    \nabla M^{k}_{\Omega}\left( F \right) \, = \, - \, \nabla \nabla \cdot N^{k}_{\Omega}\left( F \right),
\end{equation*}
we derive that 
\begin{equation*}
        T_{1,1,r}(x,z) \, = \, \sum_{j=1}^{3} \, \sum_{n \in \mathbb{N}} \, \nabla M^{k}_{\Omega}\left(e_{n}^{(j)}\right)(z)  \otimes \nabla M^{k}_{\Omega}\left(e_n^{(j)}\right)(x).
\end{equation*}
Furthermore, knowing that $\nabla M^{k}_{\Omega}$ restricted to $\mathbb{H}_{0}(\div = 0)$ is a vanishing operator, we obtain 
\begin{equation*}
        T_{1,1,r}(x,z) \, = \, \sum_{j=2}^{3} \, \sum_{n \in \mathbb{N}} \, \nabla M^{k}_{\Omega}\left(e_{n}^{(j)}\right)(z)  \otimes \nabla M^{k}_{\Omega}\left(e_n^{(j)}\right)(x).
\end{equation*}
By recalling that $e_n^{(2,3)}(\cdot) \, \in \, \mathbb{L}^{2}(\Omega)$ and $\nabla M^{k}_{\Omega}(\cdot)$ is a bounded operator from $\mathbb{L}^{2}(\Omega)$ to $\mathbb{L}^{2}(\Omega)$ we deduce that, for $z$ fixed in $\Omega$, 
\begin{equation*}
    T_{1,1,r}(\cdot, z) \, \in \, \mathbb{L}^{2}(\Omega).  
\end{equation*}
Hence, for $(\ref{T11xz})$, 
\begin{equation}\label{T11L2}
    T_{1,1}(\cdot, z) \, \in \, \mathbb{L}^{2}(\Omega).  
\end{equation}
Then, by gathering $(\ref{T12L3})$ and $(\ref{T11L2})$, we deduce that 
\begin{equation}\label{T1z}
    T_{1}(\cdot, z) \, \in \, \mathbb{L}^{2}(\Omega).
\end{equation}
In a similar way, for $z$ fixed in $\Omega$, we set 
\begin{eqnarray*}
    T_{2}(x,z) \, &:=& \, N_{\Omega}^{k}\left(  \left( k^{2}(\cdot) \, - \, k^{2} \right) \, \Pi_{k}(\cdot,z) \right)(x) \\ & \overset{(\ref{KernelP})}{=} & \, \frac{1}{k^{2}} \, N_{\Omega}^{k}\left(  \left( k^{2}(\cdot) \, - \, k^{2} \right) \, \nabla \nabla \Phi_{k}(\cdot,z) \right)(x) \, + \, N_{\Omega}^{k}\left(  \left( k^{2}(\cdot) \, - \, k^{2} \right) \, \Phi_{k}(\cdot,z) \, I_{3} \right)(x). 
\end{eqnarray*}
Since the second term on the R.H.S fulfills $\Phi_{k}(\cdot,z) I_{3} \in \mathbb{L}^{3-\delta}(\Omega)$, we deduce that 
\begin{equation*}
    N_{\Omega}^{k}\left(  \left( k^{2}(\cdot) \, - \, k^{2} \right) \, \Phi_{k}(\cdot,z) \, I_{3} \right)(x) \, \in \, \mathbb{W}^{2,3-\delta}(\Omega) \subset \mathbb{L}^{3-\delta}(\Omega).
\end{equation*}
Hence, in the sequel, to write short formulas, we denote it by $T_{2,2}(x,z)$. Then,   
\begin{eqnarray*}
    T_{2}(x,z) \, &=& \, \frac{1}{k^{2}} \, N_{\Omega}^{k}\left(  \left( k^{2}(\cdot) \, - \, k^{2} \right) \, \nabla \nabla \Phi_{k}(\cdot,z) \right)(x) \, + \, T_{2,2}(x,z) \\
    &=& \, \frac{1}{k^{2}} \, \underset{x}{\nabla} \, \underset{x}{\nabla} N_{\Omega}^{k}\left(  \left( k^{2}(\cdot) \, - \, k^{2} \right) \, \Phi_{k}(\cdot,z) \right)(x) \, + \, T_{2,2}(x,z).
\end{eqnarray*}
Now, using the Calderon-Zygmund inequality, see \cite[Theorem 9.9]{gilbarg2001elliptic}, we deduce that 
\begin{equation*}
    \underset{x}{\nabla} \, \underset{x}{\nabla} N_{\Omega}^{k}\left(  \left( k^{2}(\cdot) \, - \, k^{2} \right) \, \Phi_{k}(\cdot,z) \right)(x) \, \in \, \mathbb{L}^{3 - \delta}(\Omega).
\end{equation*}
This implies,  
\begin{equation}\label{T2z}
    T_{2}(\cdot, z) \, \in \, \mathbb{L}^{3 - \delta}(\Omega).
\end{equation}
Gathering $(\ref{T0z}), \, (\ref{T1z})$ and $(\ref{T2z})$ allows us to deduce  
\begin{equation*}
    \int_{\Omega} \Pi_{k}(\cdot,y) \cdot \left( k^{2}(y) \, - \, k^{2} \right) \, G_{k}(y,z) \, dy \, \in  \, \mathbb{L}^{\frac{3}{2}-\delta}(\Omega),
\end{equation*}
and proves the well-posed character of  $(\ref{Equa0805})$. 
\subsection{Justification of \ref{GT=G}}\label{SubsectionGT=G}
Let $E$ and $F$ be two compactly supported smooth vector fields. We consider the solutions to the two following problems:
\begin{eqnarray}
    \nabla \times \nabla \times \left( V^{E}(x) \right) \, - \, k^{2}(x) \, V^{E}(x) \, &=&  \, E(x), \quad \text{in} \quad \mathbb{R}^{3}, \label{Eq1244-1} \\ 
    \nabla \times \nabla \times \left( V^{F}(x) \right) \, - \, k^{2}(x) \, V^{F}(x) \, &=&  \, F(x), \quad \text{in} \quad \mathbb{R}^{3}, \label{Eq1244} 
\end{eqnarray}
where $V^E$ and $V^F$ satifsfy the Silver-M\"{u}ller radiation conditions. Based on $(\ref{Eq0401})$, the solution $V^{E}(\cdot)$ can be given by 
\begin{equation*}
    V^{E}(x) \, := \, \left( \, G_{k} \, \star \, E \, \right)(x), \quad x \in \mathbb{R}^{3}, 
\end{equation*}
where the convolution operator should be understood in the following sense,  
\begin{equation}\label{VEGE}
        V^{E}(x) \, = \, \int_{\mathbb{R}^{3}} G_{k}(x,y) \cdot E(y) \, dy.
\end{equation}
The formula above can be comprehended as explained in Section \ref{SecPrfThm}, formula $(\ref{Int(x)})$. In the same manner, we have
\begin{equation}\label{Equa1328}
    V^{F}(x) \, := \, \left( \, G_{k} \, \star \, F \, \right)(x) \, = \, \int_{\mathbb{R}^{3}} G_{k}(x,y) \cdot F(y) \, dy, \quad x \in \mathbb{R}^{3}.
\end{equation}
Since the vector fields $E$ and $F$ are compactly supported, then we can replace the integral set by a ball of center zero and radius $R$, i.e. $B(0,R)$, with $R$ large enough such that $\Omega \subset B(0,R)$. In $\mathbb{L}^{2}(B(0,R))$, by taking the inner product for $(\ref{Eq1244})$ with $V^E$ and for $(\ref{Eq1244-1})$ with $V^F$, we can obtain that
\begin{equation*}
\langle E, V^{F} \rangle_{\mathbb{L}^{2}(B(0,R))} \, - \, \langle V^E, F \rangle_{\mathbb{L}^{2}(B(0,R))} 
   \, = \, \int_{B(0,R)} \underset{x}{\nabla} \cdot \left[ \left( \nabla \times \left( V^{E} \right) \right) \times V^{F} \, + \, V^{E} \times  \left( \nabla \times\left(V^{F}\right)\right) \right] \, dx. 
\end{equation*}
Moreover, for the same reason that $E$ and $F$ are of compact support, we can replace the inner product $\langle \cdot ; \cdot \rangle_{\mathbb{L}^{2}(B(0,R))}$, appearing on the L.H.S, by $\langle \cdot ; \cdot \rangle_{\mathbb{L}^{2}\left(\mathbb{R}^{3}\right)}$. Then, 
\begin{equation*}
\langle E, V^{F} \rangle_{\mathbb{L}^{2}\left(\mathbb{R}^{3}\right)} \, - \, \langle V^E, F \rangle_{\mathbb{L}^{2}\left(\mathbb{R}^{3}\right)} 
   \, = \, \int_{B(0,R)} \underset{x}{\nabla} \cdot \left[ \left( \nabla \times \left( V^{E} \right) \right) \times V^{F} \, + \, V^{E} \times  \left( \nabla \times\left( V^{F} \right)\right) \right] \, dx.  
\end{equation*}
In addition, since the L.H.S is independent on the parameter $R$, by taking the limit as $R\rightarrow\infty$, we obtain 
\begin{equation}\label{VFE=FVE+J}
\langle E, V^{F} \rangle_{\mathbb{L}^{2}\left(\mathbb{R}^{3}\right)} \, - \, \langle V^E, F \rangle_{\mathbb{L}^{2}\left(\mathbb{R}^{3}\right)} 
   \, = \, J,
\end{equation}
where
\begin{equation*}
         J  \, := \, \lim_{R \rightarrow + \infty} \int_{B(0,R)} \underset{x}{\nabla} \cdot \left[ \left( \nabla \times \left( V^{E} \right) \right) \times V^{F} \, + \, V^{E} \times  \left( \nabla \times\left( V^{F} \right)\right) \right] \, dx. 
\end{equation*}
Now, we show that $J$ is vanishing. By using the Divergence theorem to rewrite $J$ as,
\begin{eqnarray*}
      J \, &=& \, \lim_{R \rightarrow + \infty} \int_{\partial B(0,R)} \left(\nabla \times \left( V^{E} \right) \cdot   \left( V^{F} \times \nu \right) \, - \, \nabla \times \left( V^{F} \right) \cdot \left( V^{E} \times \nu \right) \right)\, d\sigma \\
      &=& \, \lim_{R \rightarrow + \infty} \int_{\partial B(0,R)} \left(\nabla \times \left( V^{E} \right) \cdot   \left( V^{F} \times \nu \, - \, \nabla \times \left(V^{F} \right) \right) \, + \, \nabla \times \left(V^{E}\right) \cdot \nabla \times \left( V^{F} \right) \right) \, d\sigma \\
      &-& \, \lim_{R \rightarrow + \infty} \int_{\partial B(0,R)}  \, \left( \nabla \times \left( V^{F}\right) \cdot   \left( V^{E} \times \nu \, - \, \nabla \times \left(V^{E} \right) \right) \, + \, \nabla \times \left( V^{F} \right) \cdot \nabla \times \left(V^{E} \right) \right) \, d\sigma,
\end{eqnarray*}
which can be simplified as
\begin{eqnarray}\label{J=J1-J2}
\nonumber
      J \, &=& \, \lim_{R \rightarrow + \infty} \int_{\partial B(0,R)} \nabla \times \left( V^{E} \right) \cdot   \left( V^{F} \times \nu \, - \, \nabla \times \left( V^{F} \right) \right)  \, d\sigma \\
      &-& \, \lim_{R \rightarrow + \infty} \int_{\partial B(0,R)}  \, \nabla \times (V^{F}) \cdot   \left( V^{E} \times \nu \, - \, \nabla \times \left(V^{E} \right) \right)   \, d\sigma.
\end{eqnarray}
We set and estimate the second term on the R.H.S. as
\begin{eqnarray}\label{EstJ1} 
\nonumber
    J_{2} \, &:=& \, \lim_{R \rightarrow + \infty} \int_{\partial B(0,R)} \nabla \times V^{F} \cdot   \left( V^{E} \times \nu \, - \, \nabla \times V^{E}  \right) \, d\sigma \\
    \left\vert J_{2} \right\vert \, & \leq & \, \lim_{R \rightarrow + \infty}  \left\Vert \nabla \times V^{F} \right\Vert_{\mathbb{L}^{2}\left( \partial B(0,R) \right)} \, \left\Vert V^{E} \times \nu \, - \, \nabla \times V^{E}  \right\Vert_{\mathbb{L}^{2}\left( \partial B(0,R) \right)}.
\end{eqnarray}
Since $V^{E}$ is a radiating solution to the Maxwell equation, the following Silver-M\"{u}ller radiation condition holds, 
\begin{equation}\label{SMRC}
    \lim_{\left\vert x \right\vert \rightarrow + \infty} \left\vert x \right\vert \, \left( V^{E} \times \nu \, - \, \nabla \times V^{E} \right) \, = \, 0,
\end{equation}
see \cite[Definition 6.6]{colton2019inverse}. The formula $(\ref{SMRC})$, for $\left\vert x \right\vert \gg 1$, implies 
\begin{equation}\label{SMRC+}
     \left\vert V^{E} \times \nu \, - \, \nabla \times V^{E} \right\vert \, \lesssim \,  \frac{1}{\left\vert x \right\vert^{1+\alpha}}, \quad \alpha \in \mathbb{R}^{+}, 
\end{equation}
hence, for $R \gg 1$, we deduce 
\begin{eqnarray*}
    \left\Vert V^{E} \times \nu \, - \, \nabla \times V^{E}  \right\Vert^{2}_{\mathbb{L}^{2}\left( \partial B(0,R) \right)} \, &:=& \, \int_{\partial B(0,R)} \left\vert V^{E} \times \nu \, - \, \nabla \times V^{E}  \right\vert^{2}(x) \, d\sigma(x) \\
    & \overset{(\ref{SMRC+})}{\lesssim} & \, \int_{\partial B(0,R)} \frac{1}{\left\vert x \right\vert^{2(1+\alpha)}}  \, d\sigma(x) \, = \,  \frac{4 \, \pi}{\left\vert R \right\vert^{2 \, \alpha}}. 
\end{eqnarray*}
Using the above estimation, the inequality $(\ref{EstJ1})$ becomes, 
\begin{equation}\label{J1CurlV}
    \left\vert J_{2} \right\vert \,  \lesssim  \, \lim_{R \rightarrow + \infty}  \left\Vert \nabla \times V^{F} \right\Vert_{\mathbb{L}^{2}\left( \partial B(0,R) \right)} \, \frac{1}{R^{\alpha}}.
\end{equation}
Now, we are in a position to estimate $\left\Vert \nabla \times V^{F} \right\Vert_{\mathbb{L}^{2}\left( \partial B(0,R) \right)}$. By taking the Curl operator on the both sides of $(\ref{Equa1328})$, there holds
\begin{equation*}
    \underset{x}{\nabla} \times \left(V^{F}(x)\right) \, = \, \underset{x}{\nabla} \times \int_{\Omega} G_{k}(x,y) \cdot F(y) \, dy 
    = \, \underset{x}{\nabla} \times \int_{\Omega} \Pi_{k}(x,y) \cdot F(y) \, dy \, + \, \underset{x}{\nabla} \times \int_{\Omega} \left( G_{k} \, - \, \Pi_{k} \right)(x,y) \cdot F(y) \, dy. 
\end{equation*}
Now, by using the $\Pi_{k}(\cdot,\cdot)$'s expression given by $(\ref{KernelP})$, the definition of both the Magnetization and the Newtonian operators given by $(\ref{DefNDefMk})$ as well as the expression of $\left( G_{k} \, - \, \Pi_{k} \right)(\cdot,\cdot)$ given by $(\ref{Equa0805})$, we can rewrite the above equation as 
\begin{equation*}
    \nabla \times \left( V^{F} \right) \, = \, \nabla \times \left(  \, N^{k}_{\Omega}\left(F\right) \right) \, + \, \nabla \times \left( N^{k}_{\Omega}\left( \left(k^{2}(\cdot) \, - \, k^{2} \right) \, V^{F}(\cdot) \right) \right). 
\end{equation*}
Now, using the fact that\footnote{We recall that $SL^{k}_{\partial \Omega}$ is the Single-Layer operator defined by 
\begin{equation*}
    SL^{k}_{\partial \Omega}\left( f \right)(x) \, := \, \int_{\partial \Omega} \phi_{k}(x,y) \, f(y) \, d\sigma(y), \quad x \in \mathbb{R}^{3},
\end{equation*}
} 
\begin{equation*}
    \nabla \times N^{k}_{\Omega}\left( E \right) \, = \,  N^{k}_{\Omega}\left( \nabla \times E \right) \, - \, SL^{k}_{\partial \Omega}\left( \nu \times E \right), 
\end{equation*}
we deduce 
\begin{equation*}
    \nabla \times \left( V^{F}\right) \, = \,   N^{k}_{\Omega}\left( \nabla \times F\right) \, - \, SL^{k}_{\partial \Omega}\left( \nu \times F \right) \, + \,  N^{k}_{\Omega}\left( \nabla \times \left( \left(k^{2}(\cdot) \, - \, k^{2} \right) \, V^{F}(\cdot) \right) \right) \, - \,  SL^{k}_{\partial \Omega}\left(  \left(k^{2}(\cdot) \, - \, k^{2} \right) \nu \times V^{F}(\cdot) \right). 
\end{equation*}
Then, by taking $\left\Vert \cdot \right\Vert_{\mathbb{L}^{2}(\partial B(0,R))}$-norm on the both sides of the above equation, we obtain 
\begin{eqnarray*}
    \left\Vert \nabla \times V^{F} \right\Vert^{2}_{\mathbb{L}^{2}(\partial B(0,R))} \, & \lesssim & \,  \left\Vert N^{k}_{\Omega}\left( \nabla \times F\right) \right\Vert^{2}_{\mathbb{L}^{2}(\partial B(0,R))} \, + \, \left\Vert SL^{k}_{\partial \Omega}\left( \nu \times F \right) \right\Vert^{2}_{\mathbb{L}^{2}(\partial B(0,R))} \\
     &+& \, \left\Vert N^{k}_{\Omega}\left( \nabla \times \left( \left(k^{2}(\cdot) \, - \, k^{2} \right) \, V^{F}(\cdot) \right) \right) \right\Vert^{2}_{\mathbb{L}^{2}(\partial B(0,R))} \\ &+& \, \left\Vert  SL^{k}_{\partial \Omega}\left(  \left(k^{2}(\cdot) \, - \, k^{2} \right) \nu \times V^{F}(\cdot) \right) \right\Vert^{2}_{\mathbb{L}^{2}(\partial B(0,R))} \\ 
    & \lesssim & \, \int_{\partial B(0,R)} \int_{\Omega} \left\vert \Phi_{0}(x,y) \right\vert^{2} \, dy \, d\sigma(x)  \;\; \rho_{1} + \, \int_{\partial B(0,R)} \int_{\partial \Omega} \left\vert \Phi_{0}(x,y) \right\vert^{2} \, d\sigma(y) \, d\sigma(x)  \; \; \rho_{2},
\end{eqnarray*}
where 
\begin{eqnarray*}
    \rho_{1} &:=& \left\Vert  \nabla \times F  \right\Vert^{2}_{\mathbb{L}^{2}(\Omega)} \, + \, \left\Vert  \nabla \times \left( \left(k^{2}(\cdot) - k^{2} \right) V^F(\cdot) \right)  \right\Vert^{2}_{\mathbb{L}^{2}(\Omega)}  \\
    \rho_{2} &:=& \left\Vert  \nu \times F  \right\Vert^{2}_{\mathbb{L}^{2}(\partial \Omega)} \, + \, \left\Vert   \left(k^{2}(\cdot) - k^{2} \right)  \nu \times V^F  \right\Vert^{2}_{\mathbb{L}^{2}(\partial\Omega)}
\end{eqnarray*}
As we have assumed the vector field $F$ to be smooth, hence $V^F$ will also be smooth and the following relation holds 
\begin{equation*}
   \max\left( \rho_{1} \, ;  \, \rho_{2} \right) \, = \, \mathcal{O}\left( 1 \right). 
\end{equation*}
This implies that
\begin{equation*}
    \left\Vert \nabla \times V^{F}  \right\Vert^{2}_{\mathbb{L}^{2}(\partial B(0,R))} \,
     \lesssim  \, \int_{\partial B(0,R)} \int_{\Omega} \left\vert \Phi_{0}(x,y) \right\vert^{2} \, dy \, d\sigma(x)  \, + \, \int_{\partial B(0,R)} \int_{\partial \Omega} \left\vert \Phi_{0}(x,y) \right\vert^{2} \, d\sigma(y) \, d\sigma(x) .
\end{equation*}
Since $\left\vert x \right\vert \gg 1$, we obtain 
\begin{equation*}
    \left\Vert \nabla \times V^{F} \right\Vert^{2}_{\mathbb{L}^{2}(\partial B(0,R))} \,
     \lesssim  \, \int_{\partial B(0,R)} \frac{1}{  \left\vert x \right\vert^{2}} \, d\sigma(x) \, = \, 4 \, \pi.
\end{equation*}
Therefore, in $(\ref{J1CurlV})$, we deduce that  
\begin{equation*}
    \left\vert J_{2} \right\vert \,  \lesssim \, \lim_{R \rightarrow + \infty}  \, \frac{1}{R^{\alpha}} \, = \, 0.
\end{equation*}
Similar arguments allow us to justify that 
\begin{equation*}
    J_{1} \, := \,  \lim_{R \rightarrow + \infty} \int_{\partial B(0,R)}  Curl(V^{E}) \cdot   \left( V^{F} \times \nu \, - \, Curl\left( V^{F} \right) \right)  \, d\sigma \, = \, 0.
\end{equation*}
Then, as $ J \, = \, J_{1} \, - \, J_{2}$, see $(\ref{J=J1-J2})$, we deduce that 
\begin{equation*}
    J \, = \, 0,
\end{equation*}
from
$(\ref{VFE=FVE+J})$, which further indicates that
\begin{equation*}
    \langle E, V^{F} \rangle_{\mathbb{L}^{2}(\mathbb{R}^{3})} \, = \, \langle V^E, F \rangle_{\mathbb{L}^{2}(\mathbb{R}^{3})}. 
\end{equation*}
By using $(\ref{VEGE})$ and $(\ref{Equa1328})$, we can know that
\begin{eqnarray*}
\int_{\mathbb{R}^{3}} E(x) \cdot \int_{\mathbb{R}^{3}} G_{k}(x,y) \cdot F(y) \, dy  \, dx \, &=& \, \int_{\mathbb{R}^{3}}  \int_{\mathbb{R}^{3}} G_{k}(x,y) \cdot E(y) \, dy \cdot F(x) dx  \\
& = & \, \int_{\mathbb{R}^{3}} \int_{\mathbb{R}^{3}}  G_{k}(x,y) \cdot E(y) \cdot F(x) dy \, dx \\
& = & \, \int_{\mathbb{R}^{3}} \int_{\mathbb{R}^{3}} E(y) \cdot G_{k}^{\top}(x,y) \cdot F(x)     \, dy \, dx \\
& = & \, \int_{\mathbb{R}^{3}} E(y) \cdot \int_{\mathbb{R}^{3}} G_{k}^{\top}(x,y) \cdot F(x) \, dx  \, dy  \\
& = & \, \int_{\mathbb{R}^{3}} E(x) \cdot \int_{\mathbb{R}^{3}} G_{k}^{\top}(y,x) \cdot F(y) \, dy      \, dx. 
\end{eqnarray*}
Since $E$ and $F$ are two arbitrary vector fields, we deduce that 
\begin{equation*}
    G_{k}(x,y) \, = \, G_{k}^{\top}(y,x), \quad x \neq y. 
\end{equation*}
This justify $(\ref{GT=G})$. 
\end{document}